\renewcommand{\lim}{\varprojlim}
\newcommand{\into}{\hookrightarrow}
\DeclareMathOperator{\Fc}{\mathcal{F}}
\newcommand{\Dir}{\mathsf{Dir}^{\textit{a}}}
\DeclareMathOperator{\A}{\mathsf{A}}
\DeclareMathOperator{\Lex}{Lex}
\DeclareMathOperator{\Prok}{\mathsf{Pro}^{\textit{a}}_{\kappa}}
\newcommand{\Cc}{\mathcal{C}}
\newcommand{\Index}{\mathsf{Index}}
\newcommand{\Tatek}{\mathsf{Tate}_{\kappa}}
\newcommand{\elTatek}{\mathsf{Tate}^{\textit{el}}_{\kappa}}
\DeclareMathOperator{\coker}{coker}
\DeclareMathOperator{\ic}{ic}
\DeclareMathOperator{\Indk}{\mathsf{Ind}^{\textit{a}}_{\kappa}}
\DeclareMathOperator*{\colim}{\varinjlim}
\newcommand{\C}{\mathsf{C}}
\newcommand{\Gr}{Gr}
\DeclareMathOperator{\grp}{\times}
\DeclareMathOperator{\Pro}{\mathsf{Pro}^{\textit{a}}}
\DeclareMathOperator{\Ind}{\mathsf{Ind}^{\textit{a}}}
\DeclareMathOperator{\Fun}{Fun}
\newcommand{\ab}{\mathsf{Ab}}
\DeclareMathOperator{\op}{op}
\DeclareMathOperator{\QCoh}{QCoh}
\DeclareMathOperator{\Coh}{Coh}
\DeclareMathOperator{\F}{\mathcal{F}}
\DeclareMathOperator{\D}{\mathsf{D}}
\DeclareMathOperator{\Kk}{\mathcal{K}}
\DeclareMathOperator{\elTate}{\mathsf{Tate}^{\textit{el}}}
\newtheorem{definition}{Definition}[section]
\newtheorem{theorem}[definition]{Theorem}
\newtheorem{proposition}[definition]{Proposition}
\newtheorem{corollary}[definition]{Corollary}
\newtheorem{lemma}[definition]{Lemma}
\newtheorem{rmk}[definition]{Remark}
\newtheorem{example}[definition]{Example}
\let\oldexample\example
\renewcommand{\example}{\oldexample\normalfont}
\newtheorem*{acknowledgement}{Acknowledgement}
\begin{document}

\title[Relative Tate Objects]{Relative Tate Objects and Boundary Maps in the \\ $K$-Theory of Coherent Sheaves}
\author{Oliver Braunling,\quad Michael Groechenig,\quad Jesse Wolfson}

\address{Department of Mathematics, Universit\"{a}t Freiburg}
\email{oliver.braeunling@math.uni-freiburg.de}
\address{Department of Mathematics, Imperial College London}
\email{m.groechenig@imperial.ac.uk}
\address{Department of Mathematics, University of Chicago}
\email{wolfson@math.uchicago.edu}

\begin{abstract}
We investigate the properties of relative analogues of admissible Ind, Pro, and elementary Tate objects for pairs of exact categories, and give criteria for those categories to be abelian. A relative index map is introduced, and as an application we deduce a description for boundary morphisms in the $K$-theory of coherent sheaves on Noetherian schemes.
\end{abstract}

\thanks{O.B. was supported by GK1821 "Cohomological Methods in Geometry". M.G.\ was supported by EPSRC Grant No.\ EP/G06170X/1. J.W.\ was partially supported by an NSF Graduate Research Fellowship under Grant No.\ DGE-0824162, by an NSF Research Training Group in the Mathematical Sciences under Grant No.\ DMS-0636646, and by an NSF Post-doctoral Research Fellowship under Grant No.\ DMS-1400349. J.W. was a guest of K. Saito at IPMU while this paper was being completed. Our research was supported in part by NSF Grant No.\ DMS-1303100 and EPSRC Mathematics Platform grant EP/I019111/1.}

\maketitle

\tableofcontents

\section{Introduction}

If one knows the category $\Coh(X)$ of coherent sheaves of a scheme,
the quasi-coherent sheaves are just the category of Ind objects $\QCoh%
(X)\simeq\mathsf{Ind}^{a}\left(  \Coh(X)\right)  $, i.e. they arise from
an entirely formal categorical process. What is the geometric role of the
Pro object analogue?\medskip

Two examples: (1) For $j:U\hookrightarrow X$ open, Deligne \cite{deligne}
defines an extension-by-zero functor $j_{!}$, a type of left adjoint for the
pull-back $j^{\ast}$, or (2) for $i:Z\hookrightarrow X$ a closed immersion the adic
completion naturally outputs a Pro-coherent sheaf:
\[
j_{!}:\Coh(U)\longrightarrow\mathsf{Pro\,}\Coh(X)\qquad
\qquad\mathbf{C}_{Z}:\Coh(X)\longrightarrow\mathsf{Pro\,}%
\Coh(Z)\text{.}%
\]
Both functors \textquotedblleft need\textquotedblright\ Pro objects and cannot
be defined inside coherent sheaves alone, e.g. for $j_{!}$ this is forced by
the adjunction property. Although both functors are very natural, Pro-coherent
sheaves are used far less often than their Ind-counterpart in practice. Two
natural questions arise:

\begin{enumerate}
\item Is there a natural framework allowing one to view both Ind- and Pro-coherent
sheaves as objects in \textit{one} category?

\item How do the notions of Ind- and Pro-coherent sheaves generalise for
sheaves with support?
\end{enumerate}

This article proposes an answer to these questions and studies the effect of
these functors on algebraic $K$-theory. We summarize our answers:\medskip

(1) For this there would trivially be a boring answer by just taking an
extremely large category. However, we shall argue that the category of Tate
objects $\elTate(\Coh(X))$, originally introduced by A.
Beilinson \cite{MR923134} and K. Kato \cite{MR1804933} for different reasons,
is an interesting candidate. This is an exact category whose $K$-theory has
close ties to that of $\Coh(X)$, and its objects are precisely
extensions of quasi-coherent sheaves by Pro-coherent sheaves. So, in a way it provides
the \textsl{minimal} solution to our question. However, while
$\mathbf{C}_{Z}$ takes values in this category, this is \textit{not} the case
for Deligne's $j_{!}$. See Equation \ref{eqn:deligne}, and the paragraph after Theorem \ref{thm:introeasyreading} for a precise explanation of how this functor is related to our work.

(2) There are several ways to weave support constraints into these
categories, e.g. $\mathsf{Ind}^{a}\left(  \Coh_{Z}X\right)  $ are
Ind objects from coherent sheaves supported in $Z$, while we shall also
introduce a category $\mathsf{Ind}^{a}(\Coh(X),\Coh_{Z}X)$,
which contains Ind objects built of arbitrary coherent sheaves, but so that
the Ind-system has relative quotients with supports in $Z$. There are a number
of further variations of the theme of support and we investigate the relations
between these categories. Ultimately this requires a \textit{relative} Tate
category $\elTate(\mathsf{D},\mathsf{C})$, which gives this article
its name.

Once these categories are properly constructed, we use them to address a
question in algebraic $K$-theory. Namely, any open-closed complement
$U\hookrightarrow X\hookleftarrow Z$ gives rise to a localization sequence%
\[
\cdots\longrightarrow G_{i}(Z)\longrightarrow G_{i}(X)\longrightarrow
G_{i}(U)\overset{\partial}{\longrightarrow}G_{i-1}(Z)\longrightarrow\cdots
\]
in the $K$-theory of coherent sheaves, i.e. $G_{i}(X):=\pi_{i}(K_{\Coh(X)})$.

\begin{theorem}\label{thm:introeasyreading}
Suppose $X$ is a Noetherian scheme, $U\overset{j}{\hookrightarrow}X$ an open subscheme and $Z$ its reduced closed complement. Then there is a canonical homotopy commutative
triangle%
\[
\xymatrix{
\Omega K_{\Coh(U)} \ar[d]_{\textbf{T}_Z} \ar[rr]^\partial
&& K_{\Coh(Z)} \\
\Omega K_{\elTate(\Coh(X),\Coh_{Z}%
(X))}, \ar[urr]^{\sim}_{\mathsf{i}}
}
\]
where

\begin{enumerate}
\item the map $\mathbf{T}_{Z}$ is the \textquotedblleft open
complement\textquotedblright\ to the adic completion functor $\mathbf{C}_{Z}$;
it will be defined below in Corollary \ref{cor:T_Z} (or see Equation \ref{delignereleq} below),

and \item $\mathsf{i}$ is an equivalence of $K$-theory spaces, given as a concrete
zig-zag of simplicial maps in \S \ref{relative}.
\end{enumerate}
\end{theorem}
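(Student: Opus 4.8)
The plan is to exhibit both $\partial$ and $\mathsf{i}\circ\mathbf{T}_{Z}$ as the connecting maps of two homotopy cofibre sequences of connective $K$-theory spectra that are linked by a map of cofibre sequences, and then to invoke naturality of connecting maps. The first is the localization sequence of Quillen attached to the localization $\Coh X/\Coh_{Z}X\simeq\Coh U$ of abelian categories,
\[
K_{\Coh_{Z}X}\longrightarrow K_{\Coh X}\longrightarrow K_{\Coh U}\overset{\partial'}{\longrightarrow}\Sigma K_{\Coh_{Z}X};
\]
since $X$ is Noetherian and $Z$ is reduced, Quillen's dévissage theorem gives $K_{\Coh Z}\overset{\sim}{\to}K_{\Coh_{Z}X}$, and with this identification $\partial'$ desuspends to $\partial$. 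The second is the relative index cofibre sequence
\[
K_{\Coh_{Z}X}\longrightarrow K_{\mathsf{Ind}^{a}(\Coh X,\Coh_{Z}X)}\oplus K_{\mathsf{Pro}^{a}(\Coh X,\Coh_{Z}X)}\longrightarrow K_{\elTate(\Coh X,\Coh_{Z}X)}\longrightarrow\Sigma K_{\Coh_{Z}X}
\]
—the relative incarnation of the index map of the earlier sections. Because the admissible relative Ind- and Pro-categories carry an Eilenberg swindle, the middle term vanishes, so the last map is an equivalence $K_{\elTate(\Coh X,\Coh_{Z}X)}\overset{\sim}{\to}\Sigma K_{\Coh_{Z}X}$; desuspending and composing with the inverse of dévissage recovers precisely the zig-zag $\mathsf{i}$ of \S\ref{relative}. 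Hence it suffices to construct a map from the first cofibre sequence to the second which is the identity on the two outer terms $K_{\Coh_{Z}X}$ and which is $\mathbf{T}_{Z}$ on $K_{\Coh U}$.

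For the comparison one uses two exact functors out of $\Coh X$ — the constant functor into $\mathsf{Ind}^{a}(\Coh X,\Coh_{Z}X)$ and the relative Deligne extension $j_{!}$ into $\mathsf{Pro}^{a}(\Coh X,\Coh_{Z}X)$ — together with the functor $\mathbf{T}_{Z}\colon\Coh U\to\elTate(\Coh X,\Coh_{Z}X)$ of Corollary \ref{cor:T_Z}. The content of the construction of $\mathbf{T}_{Z}$ as the ``open complement'' of the adic completion $\mathbf{C}_{Z}$ is a natural short exact sequence, the completion--localization fracture sequence, relating for $F\in\Coh X$ the constant object on $F$, its relative Deligne extension, and $\mathbf{T}_{Z}(j^{\ast}F)$. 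This sequence is exactly what forces the square
\[
\begin{CD}
\Coh_{Z}X @>>> \Coh X\\
@| @VVV\\
\Coh_{Z}X @>>> \mathsf{Ind}^{a}(\Coh X,\Coh_{Z}X)\oplus\mathsf{Pro}^{a}(\Coh X,\Coh_{Z}X)
\end{CD}
\]
to commute — the restriction of the right vertical to $\Coh_{Z}X$ being the canonical structure functor, because a coherent sheaf already supported on $Z$ needs no genuine extension — and it identifies the induced map on cofibres with $\mathbf{T}_{Z}$. Passing to $K$-theory then produces the required map of cofibre sequences; comparing connecting maps yields $\partial'=(\text{rel.\ index})\circ\mathbf{T}_{Z}$, hence, after desuspension, $\partial=\mathsf{i}\circ\mathbf{T}_{Z}$.

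The main obstacle is the construction and verification behind the comparison square: it must commute not merely on homotopy groups but as a diagram of exact categories — or, concretely, of the explicit simplicial models whose zig-zag defines $\mathsf{i}$ — so that $K$-theory sends it to an honest map of cofibre sequences. This forces one to work inside the relative Tate, Ind-, and Pro-categories throughout: one must check that $j_{!}$ of a coherent sheaf, although not itself coherent, is a legitimate object of the relative Pro-category; that the fracture sequence is short exact in the relative sense; and that the choice of a coherent extension of a given sheaf on $U$, implicit in $j_{!}$ and $\mathbf{T}_{Z}$, becomes immaterial exactly upon passing to the relative category. A further point, routine in light of the earlier sections but still requiring attention, is that the relative index cofibre sequence exists with the stated connecting map, and that the dévissage equivalence $K_{\Coh Z}\simeq K_{\Coh_{Z}X}$ is compatible with the structure functors appearing on the Tate side.
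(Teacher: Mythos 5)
Your global strategy---compare Quillen's localization sequence for $\Coh_Z(X)\subset\Coh(X)$ (plus d\'evissage) with an index-type cofibre sequence for the relative Tate category, and use the fracture sequence $j_!\to j_*\to\mathbf{T}_Z$ to match the middle terms---is the right shape, but two of its load-bearing steps fail. First, the middle term of your ``relative index cofibre sequence'' does not vanish: the relative categories $\Ind(\Coh X,\Coh_Z X)$ and $\Pro(\Coh X,\Coh_Z X)$ carry no Eilenberg swindle, because for $F\in\Coh(X)$ the countable sum $\bigoplus_n F$ has relative quotients $F\notin\Coh_Z(X)$ and hence is not an object of the relative Ind category. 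In fact $\D\hookrightarrow\Ind(\D,\C)$ is left $s$-filtering with quotient $\Ind(\C)/\C$ (Proposition \ref{prop:z2}), so $\Kk_{\Ind(\D,\C)}$ is an extension of $\Sigma\Kk_{\C}$ by $\Kk_{\D}$ and is already nonzero when $\C=0$. The correct bicartesian square---the one the paper extracts from the cube in the proof of Theorem \ref{thm:relindexloc}---glues the \emph{absolute} categories $\Ind(\C)$ and $\Pro(\D)$ (which do admit swindles) along $\C$, using the quotient identifications of Propositions \ref{prop:z2} and \ref{prop:relTatequot}; with your middle term the sequence is simply not exact, and the conclusion $K_{\elTate}\simeq\Sigma K_{\Coh_Z X}$ does not follow from it.

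Second, $j_!F$ is not an object of $\Pro^{a}(\Coh X,\Coh_Z X)$, nor even of $\Pro^{a}(\Coh X)$: the Deligne system $(\mathcal J^i\bar F)_i$ has \emph{monic} transition maps, and Example \ref{ex:deligne} states explicitly that it cannot be replaced by an admissible system unless $X$ is Artinian. Consequently the fracture sequence \eqref{eqn:deligne} lives only in the non-admissible category $\mathsf{Ind}\,\mathsf{Pro}\,\Coh(X)$ and cannot be promoted to the diagram of exact categories your comparison of cofibre sequences requires. The paper avoids $j_!$ altogether: the boundary map is modelled simplicially by $S^r_\bullet(\C\subset\D)$ (Theorem \ref{thm:boundary}) and mapped into the lattice Grassmannian $\Gr^\le_\bullet(\D,\C)$, with the \emph{admissible} lattice $\mathbf{C}_Z(\bar F)\hookrightarrow\mathbf{T}_Z(F|_U)$ playing the role you assign to $j_!$. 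A further unaddressed point: $\mathbf{T}_Z$ is exact only after composing with the quotient by $\Ind(\Coh_Z X)$ unless $j$ is affine (Corollary \ref{cor:T_Z}), which is why the vertical map of the theorem must be defined by inverting the equivalence $K_{\elTate(\D,\C)}\to K_{\elTate(\D,\C)/\Ind(\C)}$ rather than by a strict exact functor.
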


The functor $\mathbf{T}_Z$ is in general only exact if the inclusion $j\colon U \rightarrow X$ is affine. However, exactness fails only up to a part irrelevant to $K$-theory, which allows us to state the theorem without imposing this assumption. See Corollary \ref{cor:G-theory} for the
full formulation

It is the functor $\mathbf{T}_Z$ which is related to Deligne's $j_!$. As we will see in Equation \ref{eqn:deligne}, there is a short exact sequence of (not necessarily admissible) Ind Pro objects
\begin{equation}\label{delignereleq}
0 \rightarrow j_!\F \rightarrow j_*\F \rightarrow \mathbf{T}_Z(\F) \rightarrow 0.
\end{equation}

\medskip

Let us explain the simplicial map of the theorem a little more. We keep the assumptions as said. Denote by  $\Gr^{\le}_{\bullet,\bullet}(X,Z)$ the bi-simplicial space given by the nerves of the groupoids of collections of coherent sheaves $(\F_{ij})_{i \leq n,\text{ }j \leq m}$ with inclusions $\F_{ij} \subset \F_{kl}$ for $i \leq k$ and $ j \leq l$. Moreover we assume that $j^*\F_{ij} = j^*\F_{kj}$ for $i \leq k$. Consider the span of maps
$S_{\bullet}\Coh(U)^{\times} \leftarrow \Gr^{\le}_{\bullet,\bullet}(X,Z) \rightarrow S_{\bullet}S_{\bullet}\Coh_Z(X),$
where the left-pointing arrow sends the above diagram to the simplicial diagram obtained by restriction along $j^*$. The rightward arrow maps the above to $(\F_{ij}/\F_{0j}) \in \Coh_Z(X)$.

The face and degeneracy maps of $\Gr^{\le}_{\bullet,\bullet}(X,Z)$ are defined, such that for each fixed index $i$, the map $(\F_{kj})_{k,j} \mapsto (\F_{ij})_j$ defines a map $\Gr^{\le}_{\bullet,\bullet}(X,Z) \rightarrow S_{\bullet}(\Coh(X))^{\grp}$. We now obtain the following reformulation of Theorem \ref{thm:introeasyreading}.

\begin{theorem}\label{thm:main_intro}
Taking geometric realisations of the aforementioned map, and applying the double loop space functor $\Omega^2$, we obtain the following homotopy commutative diagram
\[
\xymatrix{
 & \Omega^2|\Gr^{\le}_{\bullet,\bullet}(X,Z)| \ar[ld]_{\simeq} \ar[rd] & \\
\Omega K_{\Coh(U)} \ar[rr]^{\partial} & & K_{Z},
}
\]
where $\partial\colon K_{\Coh(U)} \rightarrow BK_{Z}$ is the boundary map of Theorem \ref{thm:introeasyreading}, resp. in the $G$-theory localisation sequence.
\end{theorem}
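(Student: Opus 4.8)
The plan is to deduce Theorem~\ref{thm:main_intro} from Theorem~\ref{thm:introeasyreading} by exhibiting the span
$S_{\bullet}\Coh(U)^{\times}\leftarrow\Gr^{\le}_{\bullet,\bullet}(X,Z)\rightarrow S_{\bullet}S_{\bullet}\Coh_{Z}(X)$
as a concrete simplicial incarnation of the composite $\mathsf{i}\circ\mathbf{T}_{Z}$. Two things then have to be checked: first, that the left leg induces a weak equivalence after geometric realisation and $\Omega^{2}$, identifying $\Omega^{2}|\Gr^{\le}_{\bullet,\bullet}(X,Z)|$ with $\Omega K_{\Coh(U)}$; second, that under this identification the right leg realises $\mathsf{i}\circ\mathbf{T}_{Z}$, which by Theorem~\ref{thm:introeasyreading} is the boundary map $\partial$. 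Assembling these yields the homotopy-commutative triangle with lower edge $\partial$.

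The conceptual heart of the first point is that lifting coherent sheaves from $U$ to $X$ is homotopically unobstructed. A point of $\Gr^{\le}_{n,m}(X,Z)$ records a flag of length $m$ in $\Coh(U)$, namely $j^{*}\F_{0\bullet}$ (which is independent of $i$), together with a chain $\F_{0\bullet}\subset\F_{1\bullet}\subset\cdots\subset\F_{n\bullet}$ of coherent lifts of it to $X$ all with the same restriction to $U$; equivalently, a flag in $\Coh(U)$ together with a chain of lattices in the associated relative Tate object. Because $X$ is Noetherian every coherent sheaf on $U$ extends over $X$, and any two lifts of a fixed flag admit a common coherent refinement (with a little care to keep the comparison maps monomorphic, e.g.\ after enlarging by a $Z$-supported summand), so the relevant categories of lifts have weakly contractible classifying spaces — a suitable cofinal subcategory of lifts is filtered. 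Feeding this into the $S_{\bullet}$-formalism, by a resolution-type argument parallel to the one establishing that $\mathsf{i}$ is an equivalence in \S\ref{relative}, shows that forgetting the chain of lifts is a $K$-theory equivalence; hence $|\Gr^{\le}_{\bullet,\bullet}(X,Z)|\to|S_{\bullet}\Coh(U)^{\times}|$ is an equivalence, and so is its double loop space.

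For the second point one traces a grid through the definitions of $\mathbf{T}_{Z}$ and $\mathsf{i}$ from \S\ref{relative}. A chain of lifts $\F_{0\bullet}\subset\F_{1\bullet}\subset\cdots$ over a given flag in $\Coh(U)$ is precisely a lattice filtration of the relative Tate object $\mathbf{T}_{Z}(\F)$: in the notation of~\eqref{delignereleq} the coherent extensions of a sheaf $\F$ on $U$ inside $j_{*}\F$ are such lattices, and $\mathbf{T}_{Z}(\F)=j_{*}\F/j_{!}\F$ is their formal quotient. The successive quotients $\F_{ij}/\F_{0j}$ are supported on $Z$ precisely because $j^{*}$ annihilates them, and passing to these $Z$-supported associated graded pieces is by definition what the index equivalence $\mathsf{i}$ extracts. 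Checking that this compatibility is simplicial — that it respects the face and degeneracy maps of $\Gr^{\le}_{\bullet,\bullet}(X,Z)$, of $S_{\bullet}\Coh(U)^{\times}$, and of $S_{\bullet}S_{\bullet}\Coh_{Z}(X)$ — then shows that the right leg, after realisation and $\Omega^{2}$, is homotopic to $\mathsf{i}\circ\mathbf{T}_{Z}\circ(\text{left leg})^{-1}$; by Theorem~\ref{thm:introeasyreading} this equals $\partial$, which is the content of the triangle.

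I expect the main obstacle to be this last, homotopy-coherent, matching step, compounded by the fact that $\mathbf{T}_{Z}$ is genuinely exact only when $j$ is affine. Because of this the comparison must be carried out inside the $\Gr^{\le}$-model, where no exactness of $\mathbf{T}_{Z}$ is invoked, and one uses Corollary~\ref{cor:G-theory} to see that the non-exact part of $\mathbf{T}_{Z}$ is invisible to $K$-theory, so that the simplicial map $\Gr^{\le}_{\bullet,\bullet}(X,Z)\to S_{\bullet}S_{\bullet}\Coh_{Z}(X)$ still carries the correct homotopy class. A secondary, routine, technical point is organising the chosen subquotients in the Waldhausen bookkeeping so that $(\F_{ij}/\F_{0j})$ is manifestly a point of $S_{\bullet}S_{\bullet}\Coh_{Z}(X)$. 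The first point, once the contractibility of the spaces of lifts is in hand, reduces to the dévissage/resolution machinery already developed for $\mathsf{i}$.
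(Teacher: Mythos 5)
Your proposal is correct and follows essentially the same route as the paper: the left leg is an equivalence because the posets of coherent extensions inside $j_*\F$ are filtered (so the fibres are contractible, as in Proposition \ref{prop:relgr}), and the right leg is identified with $\Omega^2 B\Index\circ\mathbf{T}_Z=\partial$ by mapping $\Gr^{\le}_{\bullet,\bullet}(X,Z)$ into the relative Sato complex $\Gr^{\le}_{\bullet}(S_\bullet\Coh(X),S_\bullet\Coh_Z(X))$ via $\F_{ij}\mapsto\mathbf{C}_Z(\F_{ij})$ and invoking Corollary \ref{cor:G-theory}. The only cosmetic difference is that the paper sidesteps your ``little care to keep the comparison maps monomorphic'' by working throughout with coherent subsheaves of $j_*\F$, for which the common refinement is simply $\mathcal{G}_1+\mathcal{G}_2$.
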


In order to establish our main result we continue the investigations of our article \cite{TateObjectsExactCats}, which was devoted to a detailed analysis of the exact categories of elementary Tate objects. For an exact category $\D$, together with an extension-closed full subcategory $\C \subset \D$, we define and study exact categories of \emph{relative} Tate objects $\elTate(\D,\C) \subset \elTate(\D)$; as well as their cousins $\Ind(\D,\C)$ and $\Pro(\D,\C)$ of relative admissible Ind and Pro objects. We also give necessary and sufficient criteria for categories of relative admissible Ind objects to be abelian, namely that $\C$ and $\D$ are abelian and satisfy a relative analogue of the Noetherian condition; dually, relative admissible Pro objects are abelian if and only if $\C$ and $\D$ satisfy a relative Artinian condition.  We refer the reader to Definitions \ref{defi:relative_tate} and \ref{defi:noetherian} for a precise explanation of those terms.

An example of particular interest to us is given by $\Coh_Z(X) \subset \Coh(X)$, where $X$ is a Noetherian scheme, and $Z$ a closed subscheme. We denote by $\Coh_Z(X)$ the full subcategory of coherent sheaves on $X$ with set-theoretic support at $Z$; and by $\QCoh(X,X\setminus Z)$ the category of quasi-coherent sheaves on $X$, whose restriction to $X \setminus Z$ is coherent. The following statement is part of Proposition \ref{prop:example} in the main body of the text.
\begin{example}
We have an exact equivalence $\QCoh(X,X\setminus Z) \cong \Ind(\Coh(X),\Coh_Z(X))$.
\end{example}
The study of elementary Tate objects in exact categories was pioneered by Beilinson \cite{MR923134}, who introduced this notion in order to study vanishing cycles. This direction was then further pursued by Previdi \cite{MR2872533}, and the authors in \cite{TateObjectsExactCats}. We view the present article as a natural continuation of these investigations.

\section{Recollection on Exact Categories}

For the remainder of this section we fix exact categories $\C$, $\D$. The basic definitions and properties can be found in B\"uhler's survey \cite{MR2606234}. We denote by $\Lex \C$ the abelian category of left exact presheaves, that is functors $\C^{\op} \xrightarrow{F} \ab$, taking values in the category of abelian groups, such that a short exact sequence $X \hookrightarrow Y \twoheadrightarrow Z$ is sent to an exact sequence
$$0 \rightarrow F(Z) \rightarrow F(Y) \rightarrow F(X).$$
We recall the following definition and lemma from \cite{TateObjectsExactCats}.

\begin{definition}
Fix an infinite cardinal $\kappa$, and consider a filtered poset $I$ with $|I| \leq \kappa$. An \emph{admissible Ind diagram} is a functor $X\colon I \rightarrow \C$, such that for every $i \leq j$ we have that $X_i \hookrightarrow X_j$ is an admissible monomorphism. The full and extension-closed subcategory of $\Lex \C$, consisting of objects $X$ which can be represented by $\colim_{i \in I} X_i$, over an admissible Ind diagram with $|I| \leq \kappa$, will be denoted by $\Indk(\C)$. It will be referred to as the exact category of \emph{admissible Ind objects}.
\end{definition}

It is not obvious to see that $\Indk(\C) \subset \Lex \C$ is extension-closed (see \cite[Theorem 3.7]{TateObjectsExactCats}). As a corollary one obtains a canonical structure of an exact category on $\Indk(\C)$, inherited from the abelian category $\Lex \C$. We will often refer to the following result, which is Lemma 3.11 in \cite{TateObjectsExactCats}.

\begin{lemma}\label{lemma:ind_mono}
If $(X_i)_{i \in I}$ is an admissible Ind diagram in an exact category $\C$, then for every $i \in I$ the induced $X_i \hookrightarrow X$ is an admissible monomorphism in $\Ind(\C)$.
\end{lemma}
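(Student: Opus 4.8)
The plan is to identify the cokernel of $X_i \to X$ in the abelian category $\Lex\C$ and to show that it again lies in $\Indk(\C)$. Since $\Ind(\C)\subset\Lex\C$ is extension-closed (this is \cite[Theorem 3.7]{TateObjectsExactCats}), its inherited exact structure consists precisely of those sequences that are short exact in $\Lex\C$ and have all three terms in $\Ind(\C)$; so producing a short exact sequence $X_i\hookrightarrow X\twoheadrightarrow Q$ in $\Lex\C$ with $Q\in\Indk(\C)$ is exactly what is needed to conclude that $X_i\hookrightarrow X$ is an admissible monomorphism in $\Ind(\C)$.

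First I would replace the index poset $I$ by the subposet $I_{\ge i}=\{j\in I:j\ge i\}$, which is again filtered, has cardinality $\le\kappa$, and is cofinal in $I$, so that $X\simeq\colim_{j\ge i}X_j$. For each $j\ge i$ the admissible monomorphism $X_i\hookrightarrow X_j$ has a cokernel $X_j/X_i\in\C$, and for $i\le j\le k$ the induced map $X_j/X_i\to X_k/X_i$ is again an admissible monomorphism in $\C$ — this is the standard exact-category fact that the quotient of a composite of two inflations by the first is an inflation (see B\"uhler's survey \cite{MR2606234}). Hence $(X_j/X_i)_{j\in I_{\ge i}}$ is an admissible Ind diagram over a filtered poset of cardinality $\le\kappa$, so $Q:=\colim_{j\ge i}X_j/X_i$ lies in $\Indk(\C)$ by definition.

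It remains to check that $X_i\hookrightarrow X\twoheadrightarrow Q$ is short exact in $\Lex\C$. The Yoneda embedding $\C\to\Lex\C$ is exact, so each $X_i\hookrightarrow X_j\twoheadrightarrow X_j/X_i$ is short exact there; in particular every transition map $X_i\to X_j$ is a monomorphism in $\Lex\C$. Now $\Lex\C$ is a Grothendieck abelian category, so filtered colimits in it are exact. Passing to the colimit over $j\in I_{\ge i}$ — the source system being the constant system at $X_i$ — shows that $X_i\to X$ is a monomorphism, and, since cokernels are colimits and thus commute with the filtered colimit, that $\coker(X_i\to X)\simeq\colim_{j\ge i}\coker(X_i\to X_j)=Q$. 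Therefore $X_i\hookrightarrow X\twoheadrightarrow Q$ is a conflation in $\Ind(\C)$ and $X_i\hookrightarrow X$ is an admissible monomorphism.

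The two points a complete write-up must pin down are: first, that the quotients $X_j/X_i$ genuinely assemble into an admissible Ind diagram, i.e.\ that the transition maps are honest inflations in $\C$, which is a matter of the exact-category axioms and not of $\Lex\C$; and second, the interchange of the filtered colimit with the formation of kernels and cokernels, whose only real input is the exactness of filtered colimits in $\Lex\C$, recorded in the preliminaries of \cite{TateObjectsExactCats}. I expect this second point — essentially just ``filtered colimits are exact in $\Lex\C$'' — to be the load-bearing fact, since without it one cannot even conclude that $X_i\to X$ is a monomorphism.
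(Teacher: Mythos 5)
Your proof is correct and takes the standard route: the paper itself does not reprove this lemma but cites it as Lemma 3.11 of \cite{TateObjectsExactCats}, and the argument there is essentially yours --- identify $\coker(X_i\to X)$ in $\Lex\C$ with the colimit of the admissible Ind diagram $(X_j/X_i)_{j\ge i}$ (using the Noether isomorphism for the transition maps and exactness of filtered colimits in $\Lex\C$) and conclude via the extension-closedness of $\Indk(\C)$ in $\Lex\C$. Both of the points you flag as load-bearing are indeed the right ones, and both hold.
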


The definition of admissible Pro, and elementary Tate objects will be evidently modelled on this one. In fact, categorical duality allows one to define admissible Pro objects at no extra cost. The concept of elementary Tate objects combines Ind and Pro directions in a non-trivial manner.

\begin{definition}\label{defi:elTate}
We define $\Prok(\C) = (\Indk(\C^{\op}))^{\op}$, and refer to it as the exact category of admissible Pro objects. The full subcategory of $\Indk\Prok(\C)$, consisting of objects $V$ which sit in an exact sequence
$$0 \rightarrow L \rightarrow V \rightarrow V/L \rightarrow 0,$$
with $L \in \Prok(\C)$ and $V/L \in \Indk(\C)$, will be referred to by $\elTate(\C)$, the category of \emph{elementary Tate objects}. Any such admissible subobject $L$ of $V$ is called a \emph{lattice} in $V$. We denote the set of lattices by $\Gr(V)$.
\end{definition}

There are several equivalent ways to introduce elementary Tate objects. In \cite[Definition 5.2]{TateObjectsExactCats}, elementary Tate object were defined by means of the property described in the remark below. The fact that the two viewpoints are equivalent, is implied by \cite[Theorem 5.5]{TateObjectsExactCats}.

\begin{rmk}\label{rmk:elTate}
Every elementary Tate object can be represented as a directed colimit $\colim_{i \in I}X_i \in \Ind\Pro(\C)$ over a directed poset $I$, with $X_i \in \Pro(\C)$, such that for $i \leq j$ we have $X_j/X_i \in \C$.
\end{rmk}

We need to recall the notions of left and right $s$-filtering subcategories, which were introduced in \cite[Definition 1.3 \& 1.5]{MR2079996}. The definition given below differs from the one of \emph{loc. cit.} The fact that the two definitions are equivalent is due to B\"uhler, a proof is given in \cite[Appendix A]{TateObjectsExactCats}.

\begin{definition}\label{defi:filt}
Let $\C \hookrightarrow \D$ be a fully faithful, exact inclusion of exact categories.
\begin{itemize}
\item[(a)] We say that $\C$ is \emph{left filtering} in $\D$, if every morphism $X \rightarrow Y$ in $\D$, with $X \in \C$ factors through an admissible monomorphism $X' \hookrightarrow Y$ with $X' \in \C$. We say that $\C \subset \D$ is right filtering, if $\C^{\op} \subset \D^{\op}$ is left filtering.
\item[(b)] The inclusion $\C \subset \D$ is \emph{left special}, if for every admissible epimorphism $G \twoheadrightarrow Z$ in $\D$, with $Z \in \C$, there exists a commutative diagram with short exact rows
\[
\xymatrix{
0 \ar[r] & X \ar[r] \ar[d] & Y \ar[r] \ar[d] & Z \ar[r] \ar@{=}[d] & 0 \\
0 \ar[r] & F \ar[r] & G \ar[r] & Z \ar[r] & 0,
}
\]
with the top row being a short exact sequence in $\C$. We say that $\C$ is right special in $\D$ if $\C^{\op} \subset \D^{\op}$ is left special.
\item[(c)] If $\C \subset \D$ is simultaneously left special and left filtering, then we refer to it as left $s$-filtering. Dually, if $\C^{\op} \subset \D^{\op}$ is left $s$-filtering, then we say that $\C \subset \D$ is right $s$-filtering.
\end{itemize}
\end{definition}

Left or right $s$-filtering inclusions enable us to define a quotient exact category $\D/\C$ (see \cite[Proposition 1.16]{MR2079996}). Moreover, they play an important role in the abstract study of elementary Tate objects. Before expanding on this, we have to record an elementary property of $s$-filtering embeddings.

\begin{lemma}\label{lemma:reflected}
Let $\C \hookrightarrow \D$ be left, respectively right $s$-filtering, then the inclusion reflects admissible monomorphisms and admissible epimorphisms.
\end{lemma}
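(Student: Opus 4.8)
The claim is that if $\C \hookrightarrow \D$ is left (resp. right) $s$-filtering, then a morphism $f\colon X \to Y$ with $X, Y \in \C$ which is an admissible monomorphism (resp. epimorphism) in $\D$ is already an admissible monomorphism (resp. epimorphism) in $\C$. By duality it suffices to treat the left $s$-filtering case and admissible monomorphisms; the statement for admissible epimorphisms then follows by passing to opposite categories.

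The plan is as follows. Suppose $f\colon X \hookrightarrow Y$ is an admissible monomorphism in $\D$, so it fits into a short exact sequence $X \hookrightarrow Y \twoheadrightarrow Q$ in $\D$ with $Q \in \D$. First I would observe that $Q$ need not a priori lie in $\C$, so we cannot conclude directly. Instead, I would exploit that $\C \subset \D$ is \emph{left special}: this is the hypothesis tailored to produce short exact sequences \emph{inside} $\C$ resolving admissible epimorphisms onto objects of $\C$. But here the quotient $Q$ is not known to be in $\C$, so left special does not immediately apply to $Y \twoheadrightarrow Q$. The correct move is to use left \emph{filtering}: apply it to the morphism $\id_X\colon X \to X$—no, rather, the key is the composite. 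Let me reconsider: since $X \in \C$ and $f\colon X \to Y$ is a morphism in $\D$, left filtering gives a factorization $X \to X' \hookrightarrow Y$ with $X' \in \C$ and $X' \hookrightarrow Y$ an admissible monomorphism in $\D$.

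The heart of the argument is then to identify $X'$ with $X$. The map $X \to X'$ is a morphism in $\C$, and I would argue it is an isomorphism: composing with the admissible monomorphism $X' \hookrightarrow Y$ recovers $f$, which is an admissible monomorphism in $\D$; hence $X \to X'$ is a monomorphism in $\D$ (as $f = (X' \hookrightarrow Y)\circ(X \to X')$ is monic and admissible monos are in particular monic). To see it is an \emph{epimorphism} in $\D$, and in fact an admissible one, I would use the obscure-but-standard fact that in the $s$-filtering situation, a morphism in $\C$ that becomes an admissible monomorphism (or has some surjectivity in $\D$) can be controlled by the quotient $\D/\C$ being exact. Concretely: both $X \hookrightarrow Y$ and $X' \hookrightarrow Y$ are admissible monomorphisms in $\D$ with the triangle $X \to X' \to Y$ commuting; by the factorization lemma for admissible monomorphisms (an admissible mono factoring through an admissible mono yields an admissible mono between the sources, see B\"uhler \cite{MR2606234}), $X \to X'$ is an admissible monomorphism in $\D$ with cokernel $C \in \D$. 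But $C$ is a subquotient situation, and applying left filtering once more (or the left special property to the admissible epimorphism $X' \twoheadrightarrow C$ if $C$ were in $\C$) one forces $C = 0$. The cleanest route: since $X' \hookrightarrow Y$ and $X \hookrightarrow Y$ have the \emph{same} image under the exact quotient functor $\D \to \D/\C$ (both $X, X'$ are sent to $0$), the induced maps agree, and using that $\D \to \D/\C$ reflects nothing directly but that $X \to X'$ is a map in $\C$ which is an admissible mono in $\D$ with cokernel killed in $\D/\C$—hence the cokernel lies in $\C$—we may apply the left special property to split off and conclude $X \cong X'$. Thus $f$ is isomorphic to the admissible monomorphism $X' \hookrightarrow Y$, but we want it in $\C$; since $X' \hookrightarrow Y$ was produced as an admissible mono in $\D$, not $\C$, we instead read the statement as: $f$ admits a cokernel in $\D$ lying in $\C$ (namely $Q \cong Y/X'$, and one checks $Y/X' \in \C$ because... hmm, this is not automatic).

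Let me restate the main obstacle honestly: the genuinely delicate point is showing that the cokernel $Q$ of $f$ in $\D$ actually lies in $\C$ (equivalently, that $f$ is an admissible mono in the exact-category sense \emph{internal} to $\C$, i.e. has a cokernel computed in $\C$). I expect this to follow from: (i) left filtering applied to $f$ giving $X \to X' \hookrightarrow Y$; (ii) the identification $X \xrightarrow{\sim} X'$ in $\C$ via the argument above; and (iii) then the short exact sequence $X' \hookrightarrow Y \twoheadrightarrow Y/X'$ in $\D$—but crucially, to get $Y/X' \in \C$ one does \emph{not} need it: one only needs that $f$ is an \emph{inflation} in $\C$, and since $X' \hookrightarrow Y$ came from left filtering it is by construction admissible in $\D$; the reflection statement is really about $\C$-admissibility, which requires an honest argument that $Y/X'$, regarded via the embedding, is represented by an object of $\C$—and this is where I would invoke \cite[Appendix A]{TateObjectsExactCats} or Schlichting \cite{MR2079996} showing that for $s$-filtering subcategories the admissible mono/epi structure is compatible in both directions. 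If that black box is permitted, the proof is short; if not, the main work is a diagram chase verifying that the pushout/pullback squares defining admissibility in $\D$ restrict to $\C$ using left special to resolve any appearing quotient in $\C$. I would therefore structure the writeup as: reduce to the mono case by duality; apply left filtering to factor $f$; prove the factorization map is an isomorphism using that its cokernel in $\D$ is killed by $\D \to \D/\C$ hence (by the construction of the quotient exact structure) lies in $\C$, then kill it via left special; conclude $f$ is an admissible monomorphism in $\C$.
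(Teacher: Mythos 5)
There is a genuine gap here: your proposal circles around the crux without ever supplying the fact that actually closes it. The paper's argument is short and direct: given an admissible monomorphism $X\hookrightarrow Y$ in $\D$ with $X,Y\in\C$, complete it to a conflation $X\hookrightarrow Y\twoheadrightarrow Z$ in $\D$; since $Y\in\C$, the closure property of left $s$-filtering subcategories (any short exact sequence in $\D$ whose \emph{middle} term lies in $\C$ has all three terms in $\C$ --- this is built into Schlichting's original Definition 1.3/1.5 and, for the definition used in this paper, follows from the comparison in \cite[Appendix A]{TateObjectsExactCats}) gives $Z\in\C$; and then the proof of \cite[Lemma 2.14]{TateObjectsExactCats} shows the sequence is exact already in $\C$. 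Your proposal never invokes this closure property. Instead you try to place the cokernel in $\C$ by arguing it vanishes under $\D\to\D/\C$; but the assertion that an object vanishing in $\D/\C$ must lie in $\C$ is itself a nontrivial consequence of the same closure property (and of the quotient construction, which presupposes the $s$-filtering axioms), so this route gains nothing and risks circularity. Worse, the final black box you propose --- that ``for $s$-filtering subcategories the admissible mono/epi structure is compatible in both directions'' --- is verbatim the statement of the lemma you are proving.

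Two smaller points. First, the detour through left filtering applied to $f$ itself, producing $X\to X'\hookrightarrow Y$ and then trying to show $X\to X'$ is an isomorphism, is unnecessary: $f$ is already an admissible monomorphism in $\D$, so one may take $X'=X$; the left filtering axiom contributes nothing here, and your own writeup visibly stalls at this step. Second, even once all three terms of $X\hookrightarrow Y\twoheadrightarrow Z$ are known to lie in $\C$, one still must check that the sequence is a conflation \emph{in the exact structure of} $\C$ rather than merely a kernel--cokernel pair of objects of $\C$; this is exactly what the citation of Lemma 2.14 accomplishes, and it cannot be waved away. The correct reduction by duality and the correct identification of the obstacle (``is the cokernel in $\C$, and is the sequence exact there?'') are present in your proposal, but the argument as written does not prove the lemma.
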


\begin{proof}
By categorical duality we may assume that $\C \subset \D$ is left $s$-filtering. If $X \hookrightarrow Y \twoheadrightarrow Z$ is a short exact sequence in $\D$, and $Y \in \C$, then one obtains that $X, Y \in \C$. Indeed, this is demanded by Schlichting's definition \cite[Definition 1.3 \& 1.5]{MR2079996}, or follows from \cite[Appendix A]{TateObjectsExactCats}, for the definition we stated above. In particular, if $X \hookrightarrow Y$ is an admissible monomorphism in $\D$ with $X$ and $Y$ in $\C$, it fits in a short exact sequence $X \hookrightarrow Y \twoheadrightarrow Z$ with $Z \in \C$ as well. But, the proof of Lemma 2.14 in \cite{TateObjectsExactCats} shows that this short exact sequence is also a short exact sequence in $\C$. Hence, we obtain that $X \hookrightarrow Y$ is also an admissible monomorphism in $\C$. Similarly, one deals with the case of admissible epimorphisms.
\end{proof}

It was shown in \cite[Proposition 5.6]{TateObjectsExactCats} that Pro objects are left
filtering in $\mathsf{Tate}^{el}(\mathcal{C})$ and Ind objects right
filtering. The following result strengthens these two facts. It develops the
idea of the proof of \cite[Prop. 5.8]{TateObjectsExactCats}, and is a statement of
independent interest:

\begin{proposition}
\label{Prop_LatticeLeftFiltAndRightFilt}Let $\mathcal{C}$ be an exact category.

\begin{enumerate}
\item Every morphism $Y\overset{a}{\longrightarrow}X$ in $\mathsf{Tate}%
^{el}(\mathcal{C})$ with $Y\in\mathsf{Pro}^{a}(\mathcal{C})$ can be factored
as $Y\overset{\tilde{a}}{\rightarrow}L\hookrightarrow X$ with $L$ a lattice in
$X$.

\item Every morphism $X\overset{a}{\longrightarrow}Y$ in $\mathsf{Tate}%
^{el}(\mathcal{C})$ with $Y\in\mathsf{Ind}^{a}(\mathcal{C})$ can be factored
as $X\twoheadrightarrow X/L\overset{\tilde{a}}{\rightarrow}Y$ with $L$ a
lattice in $X$.
\end{enumerate}
\end{proposition}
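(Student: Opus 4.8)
The plan is to prove (1) directly, and then obtain (2) by applying (1) to opposite categories, since $\elTate(\C^{\op}) \simeq \elTate(\C)^{\op}$ with $\Pro^{a}$ and $\Ind^{a}$ swapping roles and lattices going to lattices. So I focus on (1). Let $a\colon Y \to X$ with $Y \in \Pro^{a}(\C)$ and $X \in \elTate(\C)$. By Definition \ref{defi:elTate}, choose any lattice $L_0 \in \Gr(X)$, so that $X/L_0 \in \Ind^{a}(\C)$. The composite $Y \xrightarrow{a} X \twoheadrightarrow X/L_0$ is a morphism from a Pro object to an Ind object; my first step is to understand its image. Using Remark \ref{rmk:elTate}, write $X = \colim_{i\in I}X_i$ with $X_i \in \Pro^{a}(\C)$ and $X_j/X_i \in \C$ for $i\le j$ — but in fact it is cleaner to keep $L_0$ fixed and use that $X/L_0 = \colim_{i} (X_i/L_0)$ with each successive quotient in $\C$, i.e. $X/L_0$ is represented by an admissible Ind diagram indexed by $I$.

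The key step is a \emph{compactness} (cofinality) argument: a morphism from a Pro object $Y$ to a filtered colimit $\colim_i M_i$ of admissible monomorphisms factors through some finite stage. Concretely, I claim the composite $Y \to X/L_0$ factors through $X_{i}/L_0 \hookrightarrow X/L_0$ for some $i$; this is where the proof of \cite[Prop. 5.8]{TateObjectsExactCats} is developed further. Granting this, set $L := X_i$, which is a lattice in $X$ (it is admissibly monic into $X$ by Lemma \ref{lemma:ind_mono}-type reasoning applied inside the $\Ind\Pro$ presentation, with $X/L = \colim_{j\ge i}X_j/X_i \in \Ind^{a}(\C)$, and $L \in \Pro^{a}(\C)$). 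Then the composite $Y \to X \to X/L$ is zero, because it factors through $X_i/L \to X/L$ and $X_i/L = 0$ — wait, more carefully: $Y \to X/L_0$ factors through $X_i/L_0$, so $Y \to X/L_0 \to X/X_i$ is zero once $i$ is chosen large enough, i.e. $Y \to X/L$ vanishes where $L = X_i$. Since $\Pro^{a}(\C) \hookrightarrow \elTate(\C)$ and $L \hookrightarrow X$ is an admissible monomorphism with admissible quotient, the vanishing of $Y \to X \to X/L$ means $a$ factors (uniquely) through the kernel $L \hookrightarrow X$, giving $Y \xrightarrow{\tilde a} L \hookrightarrow X$ as desired.

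The main obstacle is the compactness claim: that $Y \to \colim_i (X_i/L_0)$, with $Y$ Pro and the colimit taken over an admissible Ind diagram, factors through a finite stage. Pro objects are \emph{not} compact in general, so this must use the specific structure: $Y \to X/L_0$ is a morphism in $\Lex\C$ (or rather in $\Ind\Pro(\C)$), and one reduces, via the definition of $\Pro^{a}(\C) = (\Ind^{a}(\C^{\op}))^{\op}$ and the description of morphisms between admissible Ind/Pro diagrams, to the statement that $\Hom_{\Ind\Pro}(\ilim_k Y_k, \colim_i M_i)$ — here the relevant finiteness is that each $M_i$, being in $\C$, is "small" relative to the \emph{Pro} direction, so a map out of the Pro system $Y$ detecting it factors through a single $M_i$. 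This is exactly the content underlying \cite[Prop. 5.6 \& 5.8]{TateObjectsExactCats}; I would extract it as the technical heart and cite/adapt that argument. Once this factorization-through-a-stage lemma is in hand, everything else is formal manipulation with admissible monomorphisms and the exact structure, using Lemma \ref{lemma:reflected} to transport (co)kernels between $\C$, $\Pro^a(\C)$, $\Ind^a(\C)$ and $\elTate(\C)$. Statement (2) then follows by the duality $(-)^{\op}$ noted above.
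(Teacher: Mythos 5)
Your argument is essentially correct, but it diverges from the paper's proof in both parts, in opposite directions. For (1) you take a detour the paper avoids: the paper simply presents $X=\colim_{i\in I}X_i$ as in Remark \ref{rmk:elTate} and observes that, since $Y\in\Pro(\C)$ is an object of the base category of the Ind-construction $\Ind(\Pro(\C))$, the map $Y\to X$ factors through some $X_i$ \emph{by the definition of morphisms in Ind categories}; checking that $X_i$ is a lattice is then Lemma \ref{lemma:ind_mono} plus $X/X_i=\colim_{j\ge i}X_j/X_i\in\Ind(\C)$. The ``compactness'' you single out as the main obstacle is thus not an obstacle at all --- it is definitional, since $Y$ is representable in $\Lex(\Pro(\C))$ --- and your route through $X/L_0$ both obscures this and introduces a small gap: writing $X/L_0=\colim_i(X_i/L_0)$ presupposes $L_0\le X_i$ cofinally, which is not automatic for an arbitrary lattice $L_0$ and an arbitrary presentation (take $L_0=X_{i_0}$ and your argument collapses to the paper's). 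For (2) the situation is reversed: the paper does \emph{not} dualize, but gives a direct argument using that $\Ind(\C)$ is right filtering in $\elTate(\C)$, the left $s$-filtering of $\Pro(\C)$ in $\Ind\Pro(\C)$ to force the intermediate object $C$ into $\C$, and a Snake Lemma computation in the idempotent completion to produce the lattice $W=\ker(L\twoheadrightarrow C)$. Your duality argument is a legitimate and arguably cleaner alternative, \emph{provided} you invoke the equivalence $\elTate(\C)^{\op}\simeq\elTate(\C^{\op})$ compatibly with lattices (a lattice $L\hookrightarrow V$ corresponds to the lattice $(V/L)^{\vee}$ of $V^{\vee}$); this is a genuine theorem of \cite{TateObjectsExactCats}, resting on the two-sided characterisation of elementary Tate objects, so you are outsourcing to it roughly the work the paper does by hand. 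State that duality explicitly and verify the lattice correspondence, and your proof of (2) is complete.
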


\begin{proof}
(1) By Remark \ref{rmk:elTate} the elementary Tate object $X$ can be represented as a formal colimit $\colim_{i \in I} X_i$, where $I$ is a filtered poset, and $I \rightarrow \Prok(\C)$ a diagram which satisfies the two conditions
\begin{itemize}
\item[-] For every pair $i \leq j$ in $I$ the induced morphism $X_i \rightarrow X_j$ is an admissible monomorphism in $\Prok(\C)$,
\item[-] such that the quotient $X_j/X_i$ lies in the full subcategory $\C$.
\end{itemize}
By virtue of the definition of morphisms in categories of Ind objects we see that $Y \rightarrow X$ factors through an $X_i \rightarrow X$. It remains to show that the latter map is an inclusion of a lattice. Lemma 3.11 of \cite{TateObjectsExactCats} implies that $X_i \hookrightarrow X$ is an admissible monomorphism. The quotient object $X/X_i$ is represented by the Ind system $\colim_{j \geq i} X_j/X_i$, and is hence an admissible Ind object in $\C$.
(2)
Since Ind objects are right filtering in $\mathsf{Tate}^{el}\left(
\mathcal{C}\right)  $ \cite[Proposition 5.8]{TateObjectsExactCats}, it suffices to
deal with the case $X\twoheadrightarrow Y$. We pick some lattice $L$ in $X$,
and by the right filtering of Ind objects again, we get an object
$C\in\mathsf{Ind}^{a}\left(  \mathcal{C}\right)  $ so that the diagram%
\[
\bfig\node l(0,500)[L]
\node c(500,500)[C]
\node x(0,0)[X]
\node y(500,0)[Y]
\node xl(0,-500)[X/L]
\arrow/^{ (}->/[l`x;]
\arrow/>/[c`y;]
\arrow/>>/[x`y;]
\arrow/>>/[l`c;]
\arrow/>>/[x`xl;]
\efig
\]
commutes. Being a quotient of a Pro object at the same time, we must have
$C\in\mathcal{C}$, since $\Pro(\C) \subset \Ind \Pro(\C)$ is left $s$-filtering by \cite[Proposition 3.10]{TateObjectsExactCats}. Let $W:=\ker(L\twoheadrightarrow C)$ and complete the
diagram to%
\[
\bfig\node l(0,500)[L]
\node c(500,500)[C]
\node x(0,0)[X]
\node y(500,0)[Y]
\node xl(0,-500)[X/L]
\node w(-500,500)[W]
\node xw(500,-500)[X/W]
\arrow/^{ (}->/[l`x;]
\arrow/^{ (}->/[w`l;]
\arrow/^{ (}->/[w`x;]
\arrow/>/[c`y;]
\arrow/>>/[x`y;]
\arrow/>>/[l`c;]
\arrow/.>/[xw`xl;]
\arrow/>>/[x`xl;]
\arrow/>>/[x`xw;]
\arrow/.>/[xw`y;]
\efig
\]
The existence of the arrows originating from $X/W$ follows from the universal
property of cokernels. In the idempotent completion
$X/W\rightarrow X/L$ must be an admissible epic by \cite[Prop. 7.6]{MR2606234}
and the Snake Lemma applied to%
\[
\bfig\node w(0,0)[W]
\node x(500,0)[X]
\node xw(1000,0)[X/W]
\node l(0,-500)[L]
\node x2(500,-500)[X]
\node xl(1000,-500)[X/L]
\arrow/^{ (}->/[w`x;]
\arrow/>>/[x`xw;]
\arrow/^{ (}->/[l`x2;]
\arrow/>>/[x2`xl;]
\arrow/^{ (}->/[w`l;]
\arrow/>>/[xw`xl;]
\arrow/=/[x`x2;]
\efig
\]
provides us with an isomorphism $\ker(X/W\twoheadrightarrow X/L)\cong L/W\cong
C$. We conclude that $C\hookrightarrow X/W\twoheadrightarrow X/L$
is exact already in elementary Tate objects, since inclusion in the idempotent completion of an exact category reflects exactness (see \cite[Proposition 6.13]{MR2606234}). Since $X/L$ is an Ind object and
$C\in\mathcal{C}$, it follows that $X/W$ is an Ind object. Moreover, $W$ is a
subobject of $L$, so $W$ is a Pro object and therefore a lattice. The
factorisation $X\twoheadrightarrow X/W\rightarrow Y$ proves the claim.
\end{proof}

Central to the theory of elementary Tate objects is the main result of \cite[Theorem 6.7]{TateObjectsExactCats}. For convenience of the reader we recall the main idea behind the argument.

\begin{theorem}
Suppose $\mathcal{C}$ is an idempotent complete exact category and
$V\in\mathsf{Tate}^{el}(\mathcal{C})$. Then the Sato Grassmannian $Gr(V)$ is a
directed and co-directed poset.
\end{theorem}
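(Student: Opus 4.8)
The plan is to deduce both halves of the statement from Proposition~\ref{Prop_LatticeLeftFiltAndRightFilt}, which already packages the essential difficulty, together with the fact that $Gr(V)$ is ordered by inclusion of admissible subobjects.

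\emph{Directedness.} Given two lattices $L_{1},L_{2}\in Gr(V)$ with lattice inclusions $\iota_{k}\colon L_{k}\hookrightarrow V$, I would form the biproduct $L_{1}\oplus L_{2}$, which again lies in $\mathsf{Pro}^{a}(\mathcal{C})$, and the induced morphism $\iota\colon L_{1}\oplus L_{2}\to V$ in $\mathsf{Tate}^{el}(\mathcal{C})$ restricting to $\iota_{k}$ on each summand. Since its source is a Pro object, Proposition~\ref{Prop_LatticeLeftFiltAndRightFilt}(1) factors it as $L_{1}\oplus L_{2}\xrightarrow{\phi}L_{3}\hookrightarrow V$ with $L_{3}$ a lattice in $V$. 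Precomposing $\phi$ with the summand inclusion $L_{k}\hookrightarrow L_{1}\oplus L_{2}$ produces a morphism $\phi_{k}\colon L_{k}\to L_{3}$ whose composite with the admissible monomorphism $L_{3}\hookrightarrow V$ is the admissible monomorphism $\iota_{k}$. As $\mathcal{C}$ is idempotent complete, so is $\mathsf{Tate}^{el}(\mathcal{C})$ (idempotent completeness being inherited by the admissible Ind, Pro and elementary Tate categories, see \cite{TateObjectsExactCats}), whence \cite[Prop. 7.6]{MR2606234} applies and $\phi_{k}$ is itself an admissible monomorphism. Thus $L_{1}\subseteq L_{3}\supseteq L_{2}$ in $Gr(V)$, so $L_{3}$ is a common upper bound.

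\emph{Co-directedness.} Dually, $V/L_{1}$ and $V/L_{2}$, and hence $(V/L_{1})\oplus(V/L_{2})$, lie in $\mathsf{Ind}^{a}(\mathcal{C})$; applying Proposition~\ref{Prop_LatticeLeftFiltAndRightFilt}(2) to the pair of quotient maps $(q_{1},q_{2})\colon V\to(V/L_{1})\oplus(V/L_{2})$ yields a factorisation $V\twoheadrightarrow V/L_{0}\to(V/L_{1})\oplus(V/L_{2})$ with $L_{0}\in Gr(V)$. In particular each $q_{k}\colon V\twoheadrightarrow V/L_{k}$ factors through $V\twoheadrightarrow V/L_{0}$, which forces $L_{0}\subseteq L_{k}$ as admissible subobjects of $V$, the inclusion $L_{0}\to L_{k}$ being an admissible monomorphism by the same cancellation principle. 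Hence $L_{0}$ is a common lower bound. (One could instead obtain co-directedness as the formal dual of directedness, via the self-equivalence $\mathsf{Tate}^{el}(\mathcal{C})^{\op}\simeq\mathsf{Tate}^{el}(\mathcal{C}^{\op})$, which reverses the order on the Grassmannian and exchanges $\mathsf{Ind}^{a}$ with $\mathsf{Pro}^{a}$.)

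I expect essentially all of the content to sit in Proposition~\ref{Prop_LatticeLeftFiltAndRightFilt}: the assertion that a morphism out of a Pro object, respectively into an Ind object, can be pushed through a lattice is exactly what makes the two one-line constructions above work. That proposition in turn rests on the $s$-filtering behaviour of $\mathsf{Pro}^{a}(\mathcal{C})$ inside $\mathsf{Ind}^{a}\mathsf{Pro}^{a}(\mathcal{C})$ and on idempotent completeness (used, as in the proof recalled just above, to pass to the idempotent completion and recognise certain maps as admissible). Idempotent completeness is also what licenses the cancellation of admissible monomorphisms/epimorphisms in the final step of each construction; without it there is no reason for $Gr(V)$ to be directed or co-directed.
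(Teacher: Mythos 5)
Your construction for directedness is exactly the paper's: form $L_{1}\oplus L_{2}\to V$, factor it through a lattice $L'$ via Proposition~\ref{Prop_LatticeLeftFiltAndRightFilt}(1), and then argue that the induced maps $L_{i}\to L'$ are admissible monics. The paper delegates that last step to the ``non-trivial'' \cite[Lemma 6.9]{TateObjectsExactCats}, whereas you substitute the cancellation principle \cite[Prop.~7.6]{MR2606234}. The substitution is in the right spirit (it is the same maneuver the paper performs inside the proof of Proposition~\ref{Prop_LatticeLeftFiltAndRightFilt}(2)), but your stated justification for why cancellation applies is false: $\elTate(\mathcal{C})$ is \emph{not} idempotent complete in general, even when $\mathcal{C}$ is --- that failure is precisely why Definition~\ref{defi:relative_tate}(d) introduces $\Tate(\mathcal{C})$ as the idempotent completion of $\elTate(\mathcal{C})$. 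The repair is to apply \cite[Prop.~7.6]{MR2606234} in $\Ind\Pro(\mathcal{C})$, which does inherit (weak) idempotent completeness from $\mathcal{C}$, or else in the idempotent completion; in the latter case one must additionally identify the cokernel $L'/L_{i}$ as an object of the original category before concluding (compare how the paper treats $X/W\to X/L$ in Proposition~\ref{Prop_LatticeLeftFiltAndRightFilt}(2), using \cite[Prop.~6.13]{MR2606234}). One then invokes Lemma~\ref{lemma:reflected} and the left $s$-filtering of $\Pro(\mathcal{C})\subset\Ind\Pro(\mathcal{C})$ to see that $L_{i}\to L'$ is admissible in $\Pro(\mathcal{C})$, which is what the order on $\Gr(V)$ requires. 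Your co-directedness argument --- which the paper's sketch omits as dual --- is fine, either via Proposition~\ref{Prop_LatticeLeftFiltAndRightFilt}(2) together with the observation that $L_{0}\hookrightarrow V$ is killed by $V\twoheadrightarrow V/L_{k}$ and hence factors through $L_{k}=\ker(V\twoheadrightarrow V/L_{k})$, or via the duality $\elTate(\mathcal{C})^{\op}\simeq\elTate(\mathcal{C}^{\op})$.
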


\begin{proof}[Sketch]
Suppose $L_{1},L_{2}\hookrightarrow V$ are lattices. Then $L_{1}\oplus
L_{2}\rightarrow V$ is a morphism from a Pro object to an elementary Tate
object. Thus, by Prop. \ref{Prop_LatticeLeftFiltAndRightFilt} there exists a
factorisation%
\[
L_{1}\oplus L_{2}\rightarrow L^{\prime}\hookrightarrow V
\]
with $L^{\prime}$ a lattice in $V$. Invoking the non-trivial result
\cite[Lemma 6.9]{TateObjectsExactCats} implies that the morphisms
$L_{i}\rightarrow L^{\prime}$ must be admissible monics.
\end{proof}

\section{Relative Tate Objects}
In this section we introduce relative versions of Ind, Pro, and Tate objects. This will allow us to give an index-theoretic description of boundary maps in algebraic $K$-theory. We begin by stating two lemmas on ordinary admissible Ind objects.

\begin{lemma}\label{lemma:IndCD}
    Let $\C$ and $\D$ be exact categories, and let $\C\hookrightarrow\D$ be an exact, fully faithful embedding. Then $\C\simeq\Ind(\C)\cap\D\subset\Ind(\D)$.
\end{lemma}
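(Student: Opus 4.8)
The plan is to reduce the statement to a compactness argument, full faithfulness being purely formal. First I would observe that the composite $\C \hookrightarrow \D \hookrightarrow \Ind(\D)$ is fully faithful (each factor is) and that it coincides with the composite $\C \hookrightarrow \Ind(\C) \hookrightarrow \Ind(\D)$; hence it factors through $\Ind(\C) \cap \D$, and to obtain the asserted equivalence it suffices to show that every object of $\Ind(\D)$ lying in $\Ind(\C) \cap \D$ is isomorphic in $\Ind(\D)$ to an object of $\C$. (If one wants the intersection to be understood literally, one checks along the way that $\Ind(\C) \to \Ind(\D)$ is itself fully faithful, since Hom-sets in categories of admissible Ind objects are computed by the formula $\varprojlim_i \varinjlim_j \Hom(X_i, Y_j)$ and $\C \hookrightarrow \D$ is fully faithful; but this is not needed for the argument.)

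So let $D \in \D$ and suppose $D \cong \varinjlim_{i \in I} C_i$ in $\Ind(\D)$ for an admissible Ind diagram $(C_i)_{i \in I}$ valued in $\C$; since $\C \hookrightarrow \D$ is exact, this is also an admissible Ind diagram in $\D$. The crucial point is that $D$, viewed through the Yoneda embedding inside $\Lex(\D)$, is compact relative to this filtered colimit: a filtered colimit of left exact presheaves is computed pointwise (filtered colimits are exact in $\ab$, so the pointwise colimit of left exact presheaves is again left exact), whence $\Hom_{\Ind(\D)}(D, \varinjlim_i C_i) = \varinjlim_i \Hom_\D(D, C_i)$. Therefore an inverse isomorphism $D \xrightarrow{\sim} \varinjlim_i C_i$ factors through one of the canonical maps $\psi_{i_0}\colon C_{i_0} \to \varinjlim_i C_i$; composing with the isomorphism yields a morphism $\iota\colon C_{i_0} \to D$ together with $s\colon D \to C_{i_0}$ such that $\iota \circ s = \id_D$. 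Thus $\iota$ is a split epimorphism, while simultaneously $\psi_{i_0}$, and hence $\iota$, is an admissible monomorphism in $\Ind(\D)$ by Lemma \ref{lemma:ind_mono}. Since an admissible monomorphism which is also an epimorphism is an isomorphism, $\iota$ identifies $D$ with $C_{i_0} \in \C$. Combined with the obvious inclusion $\C \subseteq \Ind(\C) \cap \D$, this gives $\C \simeq \Ind(\C) \cap \D$.

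The one step requiring care is the compactness identity $\Hom_{\Ind(\D)}(D, \varinjlim_i C_i) = \varinjlim_i \Hom_\D(D, C_i)$: one must confirm that the filtered colimit indexing the Ind object, which a priori is formed in the abelian category $\Lex(\D)$, agrees with the presheaf-level colimit, which is precisely the observation that a filtered colimit of left exact presheaves remains left exact. Once this is in hand, the remainder is a formal consequence of Lemma \ref{lemma:ind_mono} and the elementary fact about exact categories recalled above, so I do not anticipate any further obstacle.
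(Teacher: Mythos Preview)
Your argument is correct and follows essentially the same route as the paper: factor the isomorphism through a term $C_{i_0}$ of the diagram using compactness of objects of $\D$ in $\Ind(\D)$, then invoke Lemma~\ref{lemma:ind_mono} to see the canonical map is an admissible monic which is also epic, hence an isomorphism. The only cosmetic difference is that the paper works with the map $X_{i_0}\hookrightarrow X$ directly rather than transporting along the given isomorphism to a map $C_{i_0}\to D$, and it omits the explanatory digression on why the Hom-colimit identity holds (citing instead ``the definition of morphisms in $\Indk(\D)$'').
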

\begin{proof}
    Let $X\in\Ind(\C)$ be the colimit of an admissible Ind diagram $X\colon I\to \C$. Let $Y\in\D$ such that there exists $Y\to^\cong X$ in $\Ind(\D)$. By the definition of morphisms in $\Indk(\D)$, there exists $i \in I$, such that we have a factorisation
    \begin{equation*}
        \xymatrix{
            Y \ar[r]^{\cong} \ar[d] & X \\
            X_i. \ar@{^{(}->}[ru]
        }
    \end{equation*}
    The diagonal arrow is an admissible monic in $\Ind(\C)$ by construction (\cite[Lemma 3.11]{TateObjectsExactCats}); and the commutativity of the above diagram implies that it is also an (not necessarily admissible) epic. It is therefore an isomorphism.
\end{proof}

The next lemma is a slight generalization of \cite[Proposition 5.9(1)]{TateObjectsExactCats}; to shake things up, we give a different proof.
\begin{lemma}\label{lemma:IndCinIndD}
    Let $\D$ be an exact category, and let $\C\subset\D$ be a right (or left) s-filtering subcategory. Then for any short exact sequence in $\Ind(\D)$
    \begin{equation*}
        X\hookrightarrow Y\twoheadrightarrow Z
    \end{equation*}
    we have that $Y\in\Ind(\C)$ if and only if $X$ and $Z$ are.
\end{lemma}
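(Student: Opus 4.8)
The plan is to prove the two implications by a single device: represent the given short exact sequence \emph{levelwise}, by short exact sequences of objects of $\D$, show that all the objects occurring in such a representation actually lie in $\C$, and then reassemble. I would use three inputs. First, the inclusion $\D\hookrightarrow\Ind(\D)$ is exact and reflects exactness, so that a morphism between objects of $\D$ which is an admissible monic in $\Ind(\D)$ is already one in $\D$. Second (from the basic theory of \cite{TateObjectsExactCats}), every short exact sequence in $\Ind(\D)$ admits a levelwise representation --- it is isomorphic to the colimit of a short exact sequence of admissible Ind diagrams indexed by one filtered poset. Third, since $\C\subset\D$ is right (or left) $s$-filtering, Lemma \ref{lemma:reflected} and the properties of $s$-filtering inclusions recalled in its proof tell us that $\C\subset\D$ reflects admissible monics and epics and that $\C$ is closed under admissible subobjects, admissible quotient objects, and extensions of its objects, all computed in $\D$ (closure under extensions being part of the standing hypotheses). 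Note that none of these closure properties depends on the left/right alternative, so one argument covers both hypotheses.

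\emph{Suppose $Y\in\Ind(\C)$; we show $X,Z\in\Ind(\C)$.} Fix a levelwise representation $(X_i\hookrightarrow Y_i\twoheadrightarrow Z_i)_{i\in I}$ of the given sequence, and a presentation $Y\simeq\colim_k Y_k$ with $Y_k\in\C$. For each $i$ the structure map $Y_i\to Y$ factors through some $Y_{k(i)}$; since $Y_i\hookrightarrow Y$ and $Y_{k(i)}\hookrightarrow Y$ are admissible monics (Lemma \ref{lemma:ind_mono}), the factoring map $Y_i\to Y_{k(i)}$ is an admissible monic in $\Ind(\D)$ by cancellation of admissible monics (\cite[Prop.\ 7.6]{MR2606234}, $\Ind(\D)$ being weakly idempotent complete), hence an admissible monic in $\D$; as $Y_{k(i)}\in\C$, closure under admissible subobjects gives $Y_i\in\C$, and then $X_i\in\C$ (admissible subobject of $Y_i$) and $Z_i\simeq Y_i/X_i\in\C$ (admissible quotient of $Y_i$). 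The transition maps of $(X_i)$, $(Y_i)$, $(Z_i)$ are admissible monics in $\D$ between objects of $\C$, hence admissible monics in $\C$; so $(X_i)$ and $(Z_i)$ are admissible Ind diagrams in $\C$ with colimits $X$ and $Z$. Therefore $X,Z\in\Ind(\C)$.

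\emph{Suppose $X,Z\in\Ind(\C)$; we show $Y\in\Ind(\C)$.} Fix a levelwise representation $(X_i\hookrightarrow Y_i\twoheadrightarrow Z_i)_{i\in I}$ of the given sequence. Running the previous paragraph's factoring argument on the admissible monics $X_i\hookrightarrow X$ and $Z_i\hookrightarrow Z$ (using presentations of $X$ and $Z$ by objects of $\C$, which exist since $X,Z\in\Ind(\C)$) shows $X_i\in\C$ and $Z_i\in\C$ for every $i$. Since $\C$ is extension-closed in $\D$ and $X_i\hookrightarrow Y_i\twoheadrightarrow Z_i$ is a short exact sequence in $\D$, we get $Y_i\in\C$; and the transition maps $Y_i\to Y_j$, being admissible monics in $\D$ between objects of $\C$, are admissible monics in $\C$. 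Hence $(Y_i)$ is an admissible Ind diagram in $\C$ with colimit $Y$, and $Y\in\Ind(\C)$.

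The one genuinely delicate step --- the place I would concentrate the work --- is the passage, in both implications, from an \emph{arbitrary} levelwise representation to one whose constituent Ind diagrams take values in $\C$. This is exactly where the $s$-filtering hypothesis is needed rather than mere extension-closedness: in the first implication, extension-closedness alone gives no grip on a subobject $X\hookrightarrow Y$. The mechanism above achieves this via the comparison of Ind presentations together with the cancellation property for admissible monomorphisms; the remaining verifications --- that such a levelwise representation exists and may be compared against a given $\C$-presentation, and that colimits formed in $\Ind(\D)$ agree with those formed in $\Ind(\C)$ under the inclusion $\Ind(\C)\hookrightarrow\Ind(\D)$ --- are routine.
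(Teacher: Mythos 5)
Your overall strategy --- straighten the sequence levelwise and show the constituent objects lie in $\C$ --- is genuinely different from the paper's, which instead identifies $\Ind(\C)$ with the kernel of the exact functor $F\colon\Ind(\D)\to\Ind(\D/\C)$ and reads the statement off from exactness of $F$. Your route can be made to work, but as written it has a gap at precisely the step you single out as delicate. You deduce that the factoring map $Y_i\to Y_{k(i)}$ is an admissible monic by cancellation, citing \cite[Prop.~7.6]{MR2606234} and asserting that $\Ind(\D)$ is weakly idempotent complete. That assertion is unjustified: the cancellation property hinges on weak idempotent completeness and can fail without it, the lemma places no such hypothesis on $\D$, and $\Ind(\D)$ inherits the failure of weak idempotent completeness from $\D$ (if a split epimorphism $B\twoheadrightarrow A$ in $\D$ acquired a kernel $K\in\Ind(\D)$, the retraction $B\to K$ would factor through some $K_i\in\D$, and the factorization argument of Lemma \ref{lemma:IndCD} would force $K\cong K_i\in\D$). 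Without this cancellation you cannot conclude that $Y_i$ is an admissible subobject of $Y_{k(i)}$, so the closure of $\C$ under admissible subobjects never gets a foothold; the same problem recurs in the second implication for $X_i$ and $Z_i$.

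The repair is to reverse the order of operations: rather than fixing an arbitrary levelwise representation and then trying to push its terms into $\C$, straighten the short exact sequence \emph{against} the given $\C$-presentations. For the ``if'' direction, feed the straightening construction \cite[Prop.~3.12]{TateObjectsExactCats} presentations of $X$ and $Z$ by admissible Ind diagrams in $\C$ (exactly as in the proof of Lemma \ref{lemma:relative_exact}); the middle terms are then extensions in $\D$ of objects of $\C$, hence lie in $\C$, and Lemma \ref{lemma:reflected} makes the resulting diagram admissible in $\C$. For the ``only if'' direction, straighten the sequence against a $\C$-presentation of $Y$, so that each level is a genuine short exact sequence in $\D$ with middle term in $\C$; the closure of $\C$ under admissible subobjects and quotients (recalled in the proof of Lemma \ref{lemma:reflected}) then gives $X_i,Z_i\in\C$ with no appeal to cancellation. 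Alternatively, adopt the paper's quotient-functor argument, which avoids comparing presentations altogether.
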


\begin{proof}
We will show that the category $\Ind(\C)$ is equivalent to the fibre product
\[
\xymatrix{
\Ind(\C) \ar[r] \ar[d] & 0 \ar[d] \\
\Ind(\D) \ar[r]^F & \Ind(\D/\C),
}
\]
that is, it is the full subcategory of $X \in \Ind(\D)$, which are mapped to a zero object by $F$, i.e. $F(X) \cong 0 \in \Ind(\D/\C)$. Taking this for granted, we observe that for an exact sequence $X \hookrightarrow Y \twoheadrightarrow Z$, we have $F(Y) \cong 0$, if and only if $F(X)\cong 0$ and $F(Z)\cong 0$. Because $F$ is an exact functor, this implies that $Y \in \Ind(\C)$ if and only if $X \in \Ind(\C)$ and $Z \in \Ind(\C)$.

By \cite[Proposition 3.16]{TateObjectsExactCats}, we have a fully faithful functor $\Ind(\C) \hookrightarrow \Ind(\D)$, therefore it suffices to show that its essential image is precisely the kernel of the functor $F \colon \Ind(\D) \rightarrow \Ind(\D/\C)$. If $(X_i)_{i \in I}$ is an admissible Ind diagram in $\C$, then we have $F(X_i) \cong 0$ for every $i \in I$. In particular, $F(X_i)_{i \in I} \cong 0 \in \Ind(\D/\C)$.

Vice versa, if $(X_i)_{i \in I}$ is an admissible Ind diagram in $\D$, which is mapped to $0 \in \Ind(\D/\C)$, then the fact that the induced maps $F(X_i) \hookrightarrow 0$ are admissible monomorphisms (see Lemma \ref{lemma:ind_mono}) implies that $F(X_i) = 0$ for every $i \in I$. In particular, we see that every $X_i \in \C$. Admissible monomorphisms are reflected by right/left $s$-filtering inclusions (Lemma \ref{lemma:reflected}), which implies that $(X_i)_{i \in I} \in \Ind(\C)$.
\end{proof}

Having dealt with the technicalities above, we are ready to define relative Ind, Pro and Tate objects.
\begin{definition}\label{defi:relative_tate}
    Let $\D$ be an exact category, and let $\C \subset \D$ be an extension-closed subcategory. Let $\kappa$ be an infinite cardinal.
    \begin{itemize}
        \item[(a)] Define the category of \emph{relative admissible Ind objects} $\Indk(\D,\C)$ to be the full subcategory of $\Indk(\D)$ consisting of objects that admit a presentation by an admissible Ind diagram $X\colon I\to \D$ (cf. \cite[Def. 3.2]{TateObjectsExactCats}) such that for all $i<j$ in $I$, we have $X_j/X_i\in\C$.
        \item[(b)] Define the category of \emph{relative admissible Pro objects} $\Prok(\D,\C)$ by
            \begin{equation*}
                \Prok(\D,\C):=(\Indk(\D^{\op},\C^{\op}))^{\op}.
            \end{equation*}
        \item[(c)] Define the category of \emph{relative elementary Tate objects} $\elTatek(\D,\C)$ to be the category $\Indk(\Prok(\D),\C)$.
        \item[(d)] For $\C$ and $\D$ idempotent complete, define the category of \emph{relative Tate objects} $\Tatek(\D,\C)$ to be the idempotent completion $\elTatek(\D,\C)^{\ic}$.
    \end{itemize}
\end{definition}

\begin{rmk}
In the language of Definition \ref{defi:relative_tate}, the category of elementary Tate objects in $\C$ can be written as
    \begin{equation*}
        \elTatek(\C) = \Indk(\Prok(\C),\C).
    \end{equation*}
\end{rmk}

We begin with a formal observation, which characterises relative admissible Ind objects in categorical terms. Often it can be used to shorten proofs, provided one accepts stronger assumptions for the embedding $\C \hookrightarrow \D$.

\begin{lemma}
Assume that $\C \hookrightarrow \D$ is left or right $s$-filtering. Then, we have $\Indk(\D,\C) \cong \Indk(\D) \times_{\Indk(\D/\C)} \D/\C \hookrightarrow \Indk(\D)$ as a full subcategory of $\Indk(\D)$.
\end{lemma}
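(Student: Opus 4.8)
The plan is to identify the right-hand side concretely and then compare the two categories object by object. Write $q\colon\D\to\D/\C$ for the exact quotient functor (available since $\C\hookrightarrow\D$ is $s$-filtering) and $F:=\Indk(q)\colon\Indk(\D)\to\Indk(\D/\C)$ for the induced exact functor, which commutes with the directed colimits of admissible Ind diagrams. Because $\D/\C\hookrightarrow\Indk(\D/\C)$ is fully faithful, the projection from the fibre product to its first factor is an equivalence onto the full subcategory of those $X\in\Indk(\D)$ for which $F(X)$ is isomorphic to an object in the image of $\D/\C$. So the lemma is equivalent to the assertion that, for $X\in\Indk(\D)$, one has $X\in\Indk(\D,\C)$ if and only if $F(X)\cong q(D)$ for some $D\in\D$.

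The forward implication is immediate: given a presentation $X=\colim_i X_i$ by an admissible Ind diagram with $X_j/X_i\in\C$ for $i<j$, applying $q$ turns each transition map into an admissible monomorphism with vanishing cokernel, hence into an isomorphism; so $F(X)=\colim_i q(X_i)$ is essentially constant and $F(X)\cong q(X_{i_0})$ for any chosen index $i_0$.

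The content lies in the converse, and this is where I expect the main obstacle. Starting from an arbitrary presentation $X=\colim_{i\in I}X_i$ by an admissible Ind diagram in $\D$ with $F(X)\cong q(D)$ for some $D\in\D$, the first and crucial step is to isolate a single stage $X_{i_0}$ that already accounts for all of $F(X)$. Since $q(D)$ is a compact object of $\Indk(\D/\C)$, the identity of $F(X)$ factors through some structure map $\iota_{i_0}\colon q(X_{i_0})\to F(X)$; this map is an admissible monomorphism by Lemma~\ref{lemma:ind_mono} applied to the admissible Ind diagram $(q(X_i))_i$, and, being also split epic, it is an isomorphism. Hence $F(X/X_{i_0})=\coker(\iota_{i_0})=0$, so $X/X_{i_0}$ lies in the kernel of $F$, which the proof of Lemma~\ref{lemma:IndCinIndD} identifies with $\Indk(\C)$.

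It then remains to descend from this statement about the tail to the individual relative quotients, and here the lemmas already proved do all the work. I would replace the diagram by its restriction to the cofinal filtered subposet $\{i\ge i_0\}$, which still presents $X$. For $i\ge i_0$, the Noether isomorphism exhibits $X_i/X_{i_0}$ as an admissible subobject, in $\Indk(\D)$, of $X/X_{i_0}\in\Indk(\C)$, so Lemma~\ref{lemma:IndCinIndD} puts $X_i/X_{i_0}$ into $\Indk(\C)$; since this object also lies in $\D$, Lemma~\ref{lemma:IndCD} forces it into $\C$. For $i_0\le i<j$, the Noether isomorphism in $\D$ presents $X_j/X_i$ as the cokernel of an admissible monomorphism $X_i/X_{i_0}\hookrightarrow X_j/X_{i_0}$ between objects of $\C$, which by Lemma~\ref{lemma:reflected} is already a short exact sequence in $\C$, so $X_j/X_i\in\C$. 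Thus the restricted diagram exhibits $X$ as a relative admissible Ind object, completing the argument. The one point requiring care is keeping track of whether a given kernel or cokernel is being formed in $\C$, in $\D$, or in $\Indk(\D)$; this is harmless because every embedding in $\C\hookrightarrow\D\hookrightarrow\Indk(\D)$ is exact, so the constructions are mutually compatible.
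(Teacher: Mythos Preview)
Your proof is correct and follows essentially the same strategy as the paper's: both arguments identify a base index $i_0$ at which $q(X_{i_0})\to F(X)$ becomes an isomorphism and then verify that the cofinal tail $(X_j)_{j\ge i_0}$ is a relative admissible Ind diagram. The only difference is in the execution of that last verification: the paper argues directly in $\Indk(\D/\C)$, using that $\D/\C\hookrightarrow\Indk(\D/\C)$ is left $s$-filtering to conclude each $F(X_j)\to F(X)$ is an isomorphism and hence $q(X_j/X_i)=0$, whereas you route the argument through $X/X_{i_0}\in\Indk(\C)$ and then invoke Lemmas~\ref{lemma:IndCinIndD}, \ref{lemma:IndCD}, and \ref{lemma:reflected} to push the successive quotients down into $\C$.
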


\begin{proof}
Let $(X_i)_{i \in I}$ be an Ind diagram, representing an object $X$ of $\Indk(\D)$. We denote the exact functor $\Indk(\D) \rightarrow \Indk(\D/\C)$ by $F$. If $(X_i)_{i \in I}$ is relatively admissible, then $(F(X_i))_{i \in I}$ is a constant diagram, since the transition maps $X_i \rightarrow X_j$ are mapped to equivalences in $\D/\C$. Conversely, if $\colim_I F(X_i)\cong Y$ for some $Y\in \D/\C$, then the isomorphism $Y\to \colim_I F(X_i)$ factors through the inclusion $F(X_i)\into \colim_I F(X_i)$ for some $i$. By Lemma \ref{lemma:ind_mono}, this inclusion is therefore an epic admissible monic, and thus an isomorphism.  Because $\D/\C \hookrightarrow \Indk(\D/\C)$ is left $s$-filtering by \cite[Proposition 3.10]{TateObjectsExactCats}, we can also conclude that $F(X_j)\in\D/\C$ for all $j\ge i$ in $I$, and that $F(X_j)_{j\ge i}$ is isomorphic in $\D/\C$ to a constant diagram.  We conclude that, for all $j\ge i$, $X_j/X_i\in \C$ and thus that $(X_j)_{j\ge i}$ is an admissible relative Ind-diagram.
\end{proof}

If the reader is willing to work instead with the assumption that $\C \hookrightarrow \D$ is left or right $s$-filtering, the next lemma is a direct consequence of the result proven above.

\begin{lemma}\label{lemma:relative_exact}
    Let $\C \subset \D$ be an extension-closed full subcategory. For any cardinal $\kappa$, $\Indk(\D,\C)$ is closed under extensions in $\Indk(\D)$. Similarly, $\Prok(\D,\C)$ is closed under extensions in $\Prok(\D)$ and $\elTatek(\D,\C)$ is closed under extensions in $\Indk(\Prok(\D))$.
\end{lemma}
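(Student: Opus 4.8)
The plan is to reduce all three assertions to the first one, and to prove that by a pullback reduction together with a gluing argument.

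\textbf{Reductions.} The Pro statement is the Ind statement in disguise: since $\Prok(\D,\C)=(\Indk(\D^{\op},\C^{\op}))^{\op}$, $\Prok(\D)=(\Indk(\D^{\op}))^{\op}$, and $\C^{\op}\subset\D^{\op}$ is again extension-closed, while a sequence is short exact exactly when its opposite is short exact in the opposite category, the claim for Pro objects is the claim for the Ind objects of $\C^{\op}\subset\D^{\op}$. For the Tate statement, recall from Definition \ref{defi:relative_tate} that $\elTatek(\D,\C)=\Indk(\Prok(\D),\C)$, with $\C$ sitting inside $\Prok(\D)$ via $\C\subset\D\subset\Prok(\D)$. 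Here $\C$ is extension-closed in $\Prok(\D)$: the inclusion $\D\subset\Prok(\D)$ is a fully exact subcategory — this is dual to the statement, used already in \cite{TateObjectsExactCats} and a consequence of the Gabriel--Quillen embedding, that $\D^{\op}\subset\Indk(\D^{\op})$ is extension-closed and reflects exactness — so an extension in $\Prok(\D)$ of two objects of $\C$ lies in $\D$, is short exact there, and hence lies in $\C$ as $\C$ is extension-closed in $\D$. Thus the Tate statement is the Ind statement for the pair $\C\subset\Prok(\D)$. It therefore remains to prove: if $X\hookrightarrow Y\twoheadrightarrow Z$ is short exact in $\Indk(\D)$ with $X,Z\in\Indk(\D,\C)$, then $Y\in\Indk(\D,\C)$.

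\textbf{Pullback reduction.} Fix a presentation $Z=\colim_{k\in K}Z_k$ by an admissible Ind diagram with $Z_k\in\D$ and $Z_{k'}/Z_k\in\C$ for $k\le k'$. Each $Z_k\hookrightarrow Z$ is an admissible monomorphism (Lemma \ref{lemma:ind_mono}), so the pullback $Y_k:=Y\times_Z Z_k$ of $Z_k\hookrightarrow Z$ along the admissible epimorphism $Y\twoheadrightarrow Z$ exists in the exact category $\Indk(\D)$. Then $Y_k\hookrightarrow Y$ is an admissible monomorphism, $Y_k\twoheadrightarrow Z_k$ is an admissible epimorphism with kernel $X$, the transition maps $Y_k\hookrightarrow Y_{k'}$ are admissible monomorphisms with cokernel $Z_{k'}/Z_k\in\C$, and, since filtered colimits are exact in $\Lex\D$, one has $Y=\colim_k Y_k$. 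So two things remain: (i) the special case $Z\in\D$, applied to each extension $X\hookrightarrow Y_k\twoheadrightarrow Z_k$; and (ii) a gluing statement — an admissible Ind diagram $(V_k)_{k\in K}$ all of whose terms lie in $\Indk(\D,\C)$ and all of whose cokernels $V_{k'}/V_k$ lie in $\C$ has colimit in $\Indk(\D,\C)$.

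\textbf{The two remaining steps, and the obstacle.} Both (i) and (ii) rest on the finite-stage representation techniques for admissible Ind objects from \cite{TateObjectsExactCats}. For (i), writing $X=\colim_{i\in I}X_i$ with $X_i\in\D$ and $X_j/X_i\in\C$, one uses compactness of $Z\in\D$ inside $\Indk(\D)$ to realise $Z$ as an admissible quotient of a cofinal family of subobjects $Y_i\subset Y$ lying in $\D$, indexed compatibly over a refinement of $I$, with $\ker(Y_i\twoheadrightarrow Z)=X_i$; then $Y=\colim_i Y_i$ and $Y_j/Y_i$ is an extension of $X_j/X_i\in\C$ by $0$, hence in $\C$. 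For (ii), one presents each $V_k=\colim_{i\in I_k}V_{k,i}$ and, exploiting compactness of the $V_{k,i}\in\D$ together with the transition monomorphisms $V_k\hookrightarrow V_{k'}$, assembles the $V_{k,i}$ into a single admissible Ind diagram over a filtered index poset of cardinality $\le\kappa$; the relative-quotient condition for the assembled diagram follows from the ones for the $I_k$ and for $K$, together with extension-closedness of $\C$ in $\D$ applied to the mixed cokernels. The main obstacle is precisely this bookkeeping: the presentations of $X$, $Y$, and $Z$ are a priori unrelated, and one must reorganise them over a common, cofinally well-behaved index poset without destroying the conditions $X_j/X_i,\ Z_{k'}/Z_k\in\C$. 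Granting the general principle that a short exact sequence of admissible Ind objects can, after refinement, be represented by a levelwise short exact sequence of admissible Ind diagrams compatible with prescribed presentations of the outer terms, the lemma then follows by applying the snake lemma levelwise and invoking that $\C\subset\D$ is extension-closed; the duality reduction, the Tate reduction, and the pullback reduction above are purely formal.
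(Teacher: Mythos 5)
Your proof is correct and follows essentially the same route as the paper's: the ``general principle'' you grant at the end is precisely the straightening construction for exact sequences (Proposition~3.12 of the companion paper \cite{TateObjectsExactCats}), which the paper cites and then combines with the $3\times 3$-Lemma and extension-closedness of $\C$ in $\D$, exactly as in your final paragraph, and your reductions of the Pro and Tate cases to the Ind case also match the paper's. The intermediate pullback reduction is harmless but superfluous once the straightening construction is invoked.
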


Consequently we obtain an exact structure on the categories $\Indk(\D,\C)$, and similarly for relative admissible Pro objects, and relative elementary Tate objects.

\begin{corollary}
    The categories $\Indk(\D,\C)$, $\Prok(\D,\C)$ and $\elTatek(\D,\C)$ are exact categories.
\end{corollary}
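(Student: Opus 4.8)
The plan is to deduce this corollary directly from Lemma \ref{lemma:relative_exact} together with the standard fact (see \cite{MR2606234}) that a full subcategory of an exact category which is closed under extensions inherits a canonical exact structure, in which the admissible monomorphisms and admissible epimorphisms are precisely those arrows of the subcategory which are admissible in the ambient category and whose cokernel (resp. kernel) again lies in the subcategory. So the only real content is to invoke extension-closure in each of the three cases.

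First I would treat $\Indk(\D,\C)$. By Lemma \ref{lemma:relative_exact}, $\Indk(\D,\C)$ is closed under extensions in $\Indk(\D)$, and the latter is an exact category (it is extension-closed in $\Lex\D$, as recalled after the definition of admissible Ind objects). Hence $\Indk(\D,\C)$ acquires a canonical exact structure. For $\Prok(\D,\C)$, I would simply apply categorical duality: by definition $\Prok(\D,\C) = (\Indk(\D^{\op},\C^{\op}))^{\op}$, and since $\C^{\op}\subset\D^{\op}$ is again extension-closed, the case just handled shows $\Indk(\D^{\op},\C^{\op})$ is exact, hence so is its opposite. For $\elTatek(\D,\C)$, which by Definition \ref{defi:relative_tate}(c) equals $\Indk(\Prok(\D),\C)$, Lemma \ref{lemma:relative_exact} gives that it is closed under extensions in $\Indk(\Prok(\D))$; the latter is an exact category because $\Prok(\D)$ is exact and categories of admissible Ind objects of exact categories are exact. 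Thus $\elTatek(\D,\C)$ inherits an exact structure as well.

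I do not anticipate a serious obstacle here: the corollary is genuinely a formality once Lemma \ref{lemma:relative_exact} is in hand. The one point worth a sentence of care is that "closed under extensions" in the sense used by the extension-closure criterion requires the subcategory to be \emph{full} and to contain a zero object and be closed under isomorphisms — all of which hold by construction, since $\C\subset\D$ is assumed full and each relative category is defined as a full subcategory of an exact category and visibly contains $0$ (the empty Ind/Pro diagram). I would also remark that the resulting exact structures are compatible with the fully faithful inclusions $\Indk(\D,\C)\hookrightarrow\Indk(\D)$, $\Prok(\D,\C)\hookrightarrow\Prok(\D)$, and $\elTatek(\D,\C)\hookrightarrow\Indk(\Prok(\D))$, in the sense that these inclusions are exact functors reflecting exactness — this is automatic for the exact structure induced on an extension-closed subcategory, and is the property one actually uses in the sequel.
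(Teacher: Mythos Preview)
Your proposal is correct and matches the paper's approach: the paper states the corollary without a separate proof, treating it as an immediate consequence of Lemma~\ref{lemma:relative_exact} together with the standard fact that an extension-closed full subcategory of an exact category inherits an exact structure. Your reduction of the Pro and elementary Tate cases to the Ind case (via duality and via the definition $\elTatek(\D,\C)=\Indk(\Prok(\D),\C)$, respectively) is exactly how the paper's proof of Lemma~\ref{lemma:relative_exact} handles them as well.
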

\begin{proof}[Proof of Lemma \ref{lemma:relative_exact}]
    The statements about relative Pro and relative elementary Tate objects are special cases of the statement about relative Ind objects. In all cases, the lemma follows from the straightening construction for exact sequences \cite[Prop. 3.12]{TateObjectsExactCats} and the fact that $\C$ is closed under extensions in $\D$.

    In more detail, consider an exact sequence in $\Indk(\D)$
    \begin{equation*}
        0\to\widehat{X}\to\widehat{Y}\to\widehat{Z}\to 0
    \end{equation*}
    with $\widehat{X}$ and $\widehat{Z}$ in $\Indk(\D,\C)$. Let
    \begin{align*}
        X&\colon J\to \D\text{, and}\\
        Z&\colon I\to \D
    \end{align*}
    be admissible relative Ind diagrams. The straightening construction for exact sequences (Proposition 3.12 of \cite{TateObjectsExactCats}) shows that there exists a directed partially ordered set $K$ with final maps $K\to J$ and $K\to I$ such that the exact sequence above is isomorphic to the colimit of an admissible Ind diagram of exact sequences
    \begin{equation*}
        \xymatrix{
            K \ar[rd]^{\Rightarrow} \ar@{=}[r] & K \ar[d] \ar@{=}[r] & K \ar[ld]_{\Rightarrow}. \\
            & \C &
            }
    \end{equation*}
    For any map $i\le j$ in $K$, the $3\times 3$-Lemma \cite[Cor. 3.6]{MR2606234} shows that we have a diagram with exact rows and columns
    \begin{equation*}
        \xymatrix{
            X_i \ar@{^{(}->}[r] \ar@{^{(}->}[d] & Y_i \ar@{^{(}->}[d] \ar@{->>}[r] & Z_i \ar@{^{(}->}[d]\\
            X_j \ar@{^{(}->}[r] \ar@{->>}[d] & Y_j \ar@{->>}[d] \ar@{->>}[r] & Z_j \ar@{->>}[d] \\
            X_j/X_i \ar@{^{(}->}[r] & Y_j/Y_i \ar@{->>}[r] & Z_j/Z_i .
        }
    \end{equation*}
    Because $X\colon J\to \D$ is an admissible relative Ind diagram, so is $K\to J\to^X\D$, and similarly for $K\to I\to^Z\D$. In particular, $X_j/X_i$ and $Z_j/Z_i$ are both objects in $\C$. Because $\C$ is closed under extensions in $\D$, we conclude that $Y_j/Y_i$ is also in $\C$, and thus that $Y\colon K\to \D$ is an admissible relative Ind diagram.
\end{proof}

\begin{lemma}[Straightening]\label{lemma:straightening}
Every morphism $f\colon X \rightarrow Y$ in $\Ind(\D,\C)$ (see Definition \ref{defi:relative_tate}) can be represented (that is, \emph{straightened}) by a colimit of morphisms in $\D$,
$(X_i \rightarrow Y_i)_{i \in I},$
where $(X_i)_{i \in I}$, $(Y_i)_{i \in I}$ are relative admissible Ind diagrams.
\end{lemma}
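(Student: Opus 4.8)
The plan is to start from an arbitrary representation of $f$ as a morphism of $\Ind(\D)$, invoke the existing straightening machinery for morphisms in $\Ind(\D)$ from \cite{TateObjectsExactCats}, and then check that the diagrams obtained this way can be taken to be relative admissible Ind diagrams. First I would recall that a morphism $f\colon X\to Y$ in $\Ind(\D)$ is, essentially by definition, an element of $\lim_i\colim_j\Hom_{\D}(X_i,Y_j)$ for any chosen admissible Ind presentations $(X_i)_{i\in I}$, $(Y_j)_{j\in J}$; by the classical straightening statement (the analogue of \cite[Prop. 3.12]{TateObjectsExactCats} for a single morphism, or \cite[Prop. 3.16]{TateObjectsExactCats}), there is a cofinal directed poset $K$ together with final maps $K\to I$, $K\to J$ so that $f$ is represented by a genuine natural transformation $(X_k\to Y_k)_{k\in K}$ of diagrams $K\to\D$, where the reindexed diagrams $(X_k)_{k\in K}$ and $(Y_k)_{k\in K}$ are admissible Ind diagrams still computing $X$ and $Y$.

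The key point, and the only thing beyond the absolute case, is that the reindexed diagrams remain \emph{relative} admissible Ind diagrams, i.e. that $X_k/X_{k'}\in\C$ and $Y_k/Y_{k'}\in\C$ for $k'\le k$ in $K$. This is where the hypothesis $X,Y\in\Ind(\D,\C)$ enters: by Definition \ref{defi:relative_tate}(a), $X$ admits \emph{some} presentation by a relative admissible Ind diagram $(X_i)_{i\in I}$ with $X_j/X_i\in\C$ for $i<j$, and likewise for $Y$. Start the straightening from these relative presentations. Then I claim that for a final map $K\to I$ of directed posets, the composite $K\to I\xrightarrow{X}\D$ is again a relative admissible Ind diagram: for $k'\le k$ lying over $i'\le i$ in $I$, one factors $X_{k'}\hookrightarrow X_k$ through $X_{i'}$ (or $X_i$) using the diagram maps, and then an application of the $3\times 3$-Lemma \cite[Cor. 3.6]{MR2606234} — exactly as in the proof of Lemma \ref{lemma:relative_exact} above — expresses $X_k/X_{k'}$ as an extension of sub- and quotient objects that lie in $\C$ (because $\C$ is extension-closed in $\D$ and each $X_j/X_i\in\C$). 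Hence $X_k/X_{k'}\in\C$, and similarly $Y_k/Y_{k'}\in\C$. This shows both reindexed diagrams are relative admissible, and $(X_k\to Y_k)_{k\in K}$ is the desired straightening of $f$.

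The main obstacle I anticipate is the bookkeeping needed to run the classical straightening while simultaneously dragging along the relative presentations of \emph{both} source and target: one must choose the cofinal poset $K$ so that it refines the relative presentation of $X$ and that of $Y$ at the same time, and then verify that the comparison maps between the given relative presentation and the one produced by straightening are compatible. Concretely, one takes $K$ to be a suitable directed poset mapping finally to $I$, to $J$, and to whatever auxiliary index set the straightening of the single morphism introduces, and checks final maps compose correctly; the relative condition is then inherited via the $3\times 3$-Lemma argument sketched above. Apart from this indexing care, everything reduces to results already available — the straightening for $\Ind(\D)$ and the extension-closedness of $\C$ in $\D$ — so the proof is short once the setup is arranged.
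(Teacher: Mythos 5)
Your proposal is correct and follows essentially the same route as the paper: choose relative admissible presentations of both $X$ and $Y$ and reindex along a common cofinal poset (the paper cites \cite[Lemma 3.9]{TateObjectsExactCats} for this). The only difference is that your appeal to the $3\times 3$-Lemma is unnecessary — reindexing along a final map $K\to I$ gives the diagram $k\mapsto X_{\phi(k)}$, whose quotients $X_{\phi(k)}/X_{\phi(k')}$ are literally quotients of the original relative admissible diagram and hence already lie in $\C$.
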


\begin{proof}
We choose presentations $(X_i)_{i \in K}$, $(Y_i)_{i \in K'}$ for $Y$ as a relative admissible Ind diagram. According to \cite[Lemma 3.9]{TateObjectsExactCats} there exist cofinal maps $I \rightarrow K$, $I \rightarrow K'$, such that the induced relative admissible Ind diagram $(Y_i)_{i \in I}$ fits into a diagram of morphisms
$$(X_i \rightarrow Y_i)_{i \in I}.$$
This concludes the argument.
\end{proof}

Henceforth, we omit the cardinality bound $\kappa$ from our notation.

Under favourable conditions, the exact categories of relative Ind and Pro objects $\Ind(\D,\C)$ and $\Pro(\D,\C)$ are abelian.

\begin{definition}\label{defi:noetherian}
\begin{itemize}
\item[(a)] An abelian category $\A$ is said to be \emph{Noetherian} if for every object $X \in \A$, an ascending countable sequence of subobjects
$$X_0 \subset X_1 \subset \cdots \subset X$$
eventually stabilises, that is, there exists an index $i \in \mathbb{N}$, such that $X_{j + 1} = X_j$ for all $j \geq i$. It is said to be \emph{Artinian} if every descending countable sequence of subobjects stabilises.  
\item[(b)] Let $\A_1 \subset \A_2$ be an exact, fully faithful inclusion of abelian categories. We say that the pair $(\A_2,\A_1)$ is \emph{Noetherian}, if for every object $X \in \A_2$, every countable sequence of subobjects $X_i \subset X$ as above having $X_{i+1}/X_i \in \A_1$ eventually stabilises. Analogously for \emph{Artinian}.
\end{itemize}
\end{definition}

\begin{example}\mbox{}
    \begin{enumerate}
        \item For a Noetherian commutative ring $R$, the abelian category of finitely generated $R$-modules is Noetherian. If $R$ is Artinian as a ring, the category is also an Artinian.
        \item An abelian category $\Cc$ is Noetherian if and only if $\Cc^{\op}$ is Artinian.   
    \end{enumerate}
\end{example}

\begin{proposition}\label{prop:abelian}
Let $\D$ be an abelian category, and $\C \subset \D$ a Serre subcategory. Then,
\begin{itemize}
\item[(1)] the exact category $\Ind(\D,\C)$ is equivalent to an abelian category with the maximal exact structure if and only if the pair $(\D,\C)$ is Noetherian.
\item [(2)] the exact category $\Pro(\D,\C)$ is equivalent to an abelian category with the maximal exact structure if and only if the pair $(\D,\C)$ is Artinian.
\end{itemize}
Here Noetherian and Artinian are to be understood in the sense of Definition \ref{defi:noetherian} (b).
\end{proposition}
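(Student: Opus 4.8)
The plan is to prove statement (1) in detail; statement (2) follows by applying (1) to the opposite category, since $\Pro(\D,\C) = (\Ind(\D^{\op},\C^{\op}))^{\op}$, a Serre subcategory of an abelian category remains Serre in the opposite, and a pair is Artinian if and only if the opposite pair is Noetherian. So I will focus on showing that $\Ind(\D,\C)$ is abelian (with its exact structure being the maximal one) precisely when $(\D,\C)$ is Noetherian.

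For the ``only if'' direction I would argue contrapositively. Suppose $(\D,\C)$ is not Noetherian, so there is an object $X\in\D$ with a strictly increasing chain of subobjects $X_0\subsetneq X_1\subsetneq\cdots\subset X$ with all $X_{i+1}/X_i\in\C$. Since $\D$ is abelian and $\C$ is Serre, each inclusion $X_i\hookrightarrow X_{i+1}$ is an admissible monomorphism with quotient in $\C$, so $(X_i)_{i\in\mathbb{N}}$ is an admissible relative Ind diagram; let $\widehat{X}=\colim_i X_i\in\Ind(\D,\C)$. The constant object $X$ (viewed in $\Ind(\D,\C)$) receives a monomorphism from $\widehat X$ — indeed each $X_i\hookrightarrow X$ is compatible, and the induced map $\widehat X\to X$ is a monomorphism in $\Lex\D$ since it is so at each stage. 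In an abelian category this map would have a cokernel, but I would show its ``would-be'' cokernel in $\Lex\D$ is not representable by a relative Ind object over a $\kappa$-bounded diagram whose subquotients lie in $\C$ — more concretely, the map $\widehat X\hookrightarrow X$ is not an admissible monomorphism in $\Ind(\D,\C)$, because any admissible monomorphism with constant target would have to be an isomorphism onto a subobject presented by the constant diagram (using Lemma \ref{lemma:IndCD}: $\D\simeq\Ind(\D)\cap\D$), forcing $\widehat X\cong X_i$ for some $i$, contradicting strict increase. A category in which some monomorphism fails to be a kernel cannot be abelian; this gives the contrapositive.

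For the ``if'' direction, assume $(\D,\C)$ is Noetherian. The key point is that, under this hypothesis, every relative admissible Ind object $\widehat X\in\Ind(\D,\C)$ is, up to isomorphism, a filtered colimit indexed by $\mathbb{N}$ (or a finite poset), because any $\kappa$-filtered admissible relative Ind diagram can be refined to a countable cofinal subdiagram once we know the chain of partial colimits stabilises at countable stages — this is where the Noetherian condition is indispensable, and I expect this to be the main obstacle: making precise that a relative Ind presentation can always be replaced by one along $\mathbb N$, and controlling cofinality. Given this, I would show $\Ind(\D,\C)$ has kernels and cokernels: for a morphism $f\colon\widehat X\to\widehat Y$, straighten it by Lemma \ref{lemma:straightening} to a map of relative Ind diagrams $(X_i\to Y_i)_{i\in I}$, form kernels and cokernels levelwise in the abelian category $\D$, and check that the resulting diagrams $(\ker f_i)$, $(\coker f_i)$ are again relative admissible Ind diagrams — the subquotients of $\coker f_i$ are subquotients of $Y_j/Y_i\in\C$, hence in $\C$ since $\C$ is Serre; for kernels one uses that $\ker f_j/\ker f_i\hookrightarrow X_j/X_i\in\C$ together with $\C$ being closed under subobjects. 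Then one verifies the factorisation axiom: every $f$ factors as $\widehat X\twoheadrightarrow \mathrm{coim}\,f\xrightarrow{\sim}\mathrm{im}\,f\hookrightarrow\widehat Y$, which reduces levelwise to the analogous statement in $\D$. Finally, since the admissible monomorphisms and epimorphisms in $\Ind(\D,\C)$ are exactly the monomorphisms and epimorphisms that admit such levelwise presentations, and in an abelian category every mono is admissible, I would conclude that the exact structure inherited from $\Lex\D$ coincides with the maximal exact structure on the resulting abelian category. Throughout, the compatibility of the inherited exact structure with the abelian one rests on Lemma \ref{lemma:relative_exact} (closure under extensions) and the straightening construction \cite[Prop.\ 3.12]{TateObjectsExactCats}.
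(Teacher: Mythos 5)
Your duality reduction of (2) to (1) and your ``only if'' direction are essentially the paper's argument and are fine: the non-stabilising chain yields a monomorphism $\widehat X\to X$ into a constant object which cannot be admissible, because an admissible subobject of an object of $\D$ inside $\Ind(\D)$ lies in $\D$ (left $s$-filtering), forcing $\widehat X\cong X_i$ for some $i$.

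The ``if'' direction, however, has a genuine gap. First, the reduction you single out as the main obstacle --- that Noetherianity of $(\D,\C)$ allows every relative admissible Ind diagram to be replaced by a countable cofinal one --- is false as well as unnecessary: the Noetherian condition controls chains of subobjects of a \emph{fixed} object of $\D$, not the cardinality of presentations. Already for the Noetherian pair $\C=\D=\mathsf{Vect}_f$, a vector space of uncountable dimension is an object of $\Ind(\D,\C)$ admitting no countable admissible presentation. Second, and fatally, your levelwise cokernel (equivalently, the quotient system $(Y_i/I_i)_{i\in I}$ in the factorisation through images) is in general \emph{not} a relative admissible Ind diagram. The problem is not whether its subquotients lie in $\C$ --- which is all you verify --- but whether the transition maps $Y_i/I_i\to Y_j/I_j$ are monomorphisms at all; they need not be (in the paper's non-abelian example they are all epimorphisms). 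This is precisely the step where the relative Noetherian hypothesis must be used: one shows (Lemma \ref{lemma:trick}) that the kernels $\ker\bigl(Y_i/I_i\to Y_j/I_j\bigr)$, which lie in $\C$, form an ascending chain of subobjects of $Y_i/I_i$ with successive quotients in $\C$, hence stabilise at some $K_i$; the quotients $(Y_i/I_i)/K_i$ then assemble into an honest relative admissible Ind diagram presenting the same colimit, exhibiting $I\to\widehat Y$ as an admissible monomorphism of $\Ind(\D,\C)$. Without this repair, Freyd's criterion (every morphism factors as admissible epi followed by admissible mono), which is the engine of the paper's proof, is not verified, and neither is your claim that cokernels exist.
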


\begin{proof}
We shall only deal with the case of $\Ind(\D,\C)$, for Pro objects it suffices to invert arrows. Freyd has shown in \cite[Prop. 3.1]{MR0209333} that an exact category is abelian if and only if every morphism $X \rightarrow Y$ is \emph{admissible}, that is, admits a factorisation
$$X \twoheadrightarrow I \hookrightarrow Y,$$
where the first morphism is an admissible epimorphism, and the second one an admissible monomorphism.

For a morphism $X \xrightarrow{f} Y$ in $\Indk(\D,\C) \subset \Indk(\D)$ we may choose a straightening by Lemma \ref{lemma:straightening}, that is, a presentation as a map of diagrams $(f_i \colon X_i \rightarrow Y_i)_{i \in I}$, where each $(X_i)_{i \in I}$ and $(Y_i)_{i \in I}$ is a relative admissible Ind diagram in $\D$. In particular, we may assume that the transition maps $X_i \rightarrow X_j$ and $Y_i \rightarrow Y_j$ are monomorphisms with quotients in $\D$.

Since $\D$ is abelian, we obtain a factorisation $X_i \twoheadrightarrow I_i \hookrightarrow Y_i$, where each $I_i$ is the image of the morphism $f_i$. The equivalence
$$I_i \cong \coker(\ker f_i \rightarrow X_i)$$
implies that we have a commuting diagram with exact rows
\[
\xymatrix{
0 \ar[r] & \ker f_i \ar[r] \ar[d] & X_i \ar[r] \ar[d] & I_i \ar@{-->}[d] \ar[r] & 0\\
0 \ar[r] & \ker f_j \ar[r] & X_j \ar[r] & I_j \ar[r] & 0
}
\]
for every pair of indices $i \leq j$, and in particular obtain canonical maps $I_i \rightarrow I_j$. The commutative square
\[
\xymatrix{
I_i \ar[r] \ar[d] & Y_i \ar[d] \\
I_j \ar[r] & Y_j
}
\]
implies that $I_i \hookrightarrow I_j$ is an admissible monomorphism. Moreover we see that $I_j/I_i$ is a quotient of $X_j/X_i$, thus belongs to $\C$, since $\C$ is a Serre subcategory. We denote by $I = \colim_{i \in I} I_i$ the corresponding object of $\Ind(\D,\C)$.

So far we have produced a factorisation $X \rightarrow I \rightarrow Y$. The morphism $X \rightarrow I_i$ is an admissible epimorphism, since it fits into an exact sequence given by the colimit of
$$0 \rightarrow \ker f_i \rightarrow X_i \rightarrow I_i \rightarrow 0.$$
In order to conclude the proof, we need to show that $I \rightarrow Y$ is an admissible monomorphism in $\Ind(\D,\C)$. In $\Lex \D$ it fits into a short exact sequence given by the colimit of
$$0 \rightarrow I_i \rightarrow Y_i \rightarrow Y_i/I_i \rightarrow 0,$$
but the direct system $(Y_i/I_i)_{i \in I}$ is not necessarily admissible. However, according to Lemma \ref{lemma:trick} it admits a presentation by a relative admissible Ind diagram, which concludes the proof of one direction. This is where the relative Noetherian assumption is key.

Vice versa, assume that $(\D,\C)$ is not Noetherian, that is, there exists an object $Y \in \D$ and a sequence of subobjects
$$X_i \subset Y,$$
such that $X_i \neq X_{i + 1}$, and $X_{i+1}/X_i \in \C$. Let us denote by $X = \colim_{i \in I}X_i$ the corresponding object of $\Ind(\D,\C)$. We have a morphism $X \xrightarrow{f} Y$, induced by the inclusions $X_i \hookrightarrow Y$. We claim that $f$ does not have a cokernel in $\Ind(\D,\C)$. Indeed, assume that $f$ has a cokernel, which implies that $f$ is an admissible monomorphism in $\Ind(\D,\C)$. Hence, $f$ is also an admissible monomorphism in $\Ind(\D)$. However, \cite[Proposition 3.10]{TateObjectsExactCats} shows that $\D \subset \Ind(\D)$ is left $s$-filtering. This implies that every $X \in \Ind(\D)$, such that $X \subset Y$, we have in fact $X \in \D$. However, the object $X$ we have constructed above cannot belong to $\D$ because the sequence
$$\cdots \subset X_i \subset X_{i+1} \subset \cdots$$
does not stabilise.  If $X$ were in $\D$, then the isomorphism $X \rightarrow \colim_i X_i$ would factor through a fixed $X_i$, in which case, we would have an epic admissible monic $X_i\to X$, and thus $X$ would be isomorphic to $X_i$. We conclude that $\Ind(\D,\C)$ cannot be abelian, if $(\D,\C)$ is not relatively Noetherian.
\end{proof}

\begin{lemma}\label{lemma:trick}
Let $\D$ be an abelian category with a Serre subcategory $\C \subset \D$, such that the pair $(\D,\C)$ satisfies the relative Noetherian condition of Definition \ref{defi:noetherian}. Under these assumptions, every colimit in $\Lex \D$
$$X = \colim_{i \in I} X_i,$$
where $I$ is a directed poset, and $\ker (X_i \rightarrow X_j),\text{ }\coker (X_i \rightarrow X_j) \in \C$ for every ordered pair of indices $i \leq j$, is equivalent to an object in $\Ind(\D,\C)$.
\end{lemma}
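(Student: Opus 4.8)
The plan is to replace the given (possibly non-admissible) directed system $(X_i)_{i \in I}$ by an honestly admissible relative Ind diagram with the same colimit, by passing to \emph{eventual images}. I work throughout inside the Grothendieck abelian category $\Lex\D$, in which directed colimits are exact; in particular they commute with the formation of kernels, cokernels and images, and the exact embedding $\D \into \Lex\D$ lets us compute kernels and cokernels of the transition maps already in $\D$ (see \cite{TateObjectsExactCats}). For $i \leq j$ in $I$ write $\image(X_i \to X_j) \in \D$ for the image of the transition map — a quotient of $X_i$ and a subobject of $X_j$ — and put $K_{ij} := \ker\bigl(X_i \onto \image(X_i \to X_j)\bigr) \subset X_i$, so that $K_{ij} = \ker(X_i \to X_j)$.

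First I would fix $i$ and study the monotone system $j \mapsto K_{ij}$ of subobjects of $X_i$, indexed by the directed poset $I_{\geq i}$. For $i \leq j \leq k$ the induced map $\image(X_i \to X_j) \onto \image(X_i \to X_k)$ is surjective and its kernel, which is canonically $K_{ik}/K_{ij}$, is a subobject of $\ker(X_j \to X_k)$; the latter lies in $\C$ by hypothesis, hence so does $K_{ik}/K_{ij}$ because $\C$ is a Serre subcategory. If the system $j \mapsto K_{ij}$ did not stabilise, directedness of $I_{\geq i}$ would let me extract a strictly increasing sequence $j_0 \leq j_1 \leq \cdots$ with $K_{ij_0} \subsetneq K_{ij_1} \subsetneq \cdots$ and all successive quotients in $\C$ — contradicting the relative Noetherian condition of Definition \ref{defi:noetherian} for $(\D,\C)$ applied to $X_i$. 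So there is $j_0 = j_0(i) \geq i$ with $\image(X_i \to X_j) \xrightarrow{\sim} \image(X_i \to X_k)$ for all $j_0 \leq j \leq k$. Since $\ker(X_i \to X) = \colim_{k \geq i} \ker(X_i \to X_k) = \colim_{k \geq i} K_{ik}$ by exactness of directed colimits, and this colimit is eventually constant, I may set $\bar X_i := \image(X_i \to X) \cong \image(X_i \to X_{j_0(i)})$; this is a subobject of $X$ which lies in $\D$, being a quotient of $X_i$.

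Next I would verify that $(\bar X_i)_{i \in I}$, with the inclusions $\bar X_i \subset \bar X_{i'} \subset X$ for $i \leq i'$, is an admissible relative Ind diagram presenting $X$. Each inclusion $\bar X_i \into \bar X_{i'}$ is an admissible monomorphism because $\D$ is abelian. For the quotient: pick, by directedness, an index $j \geq j_0(i), j_0(i')$, realise $\bar X_i = \image(X_i \to X_j) \subset \image(X_{i'} \to X_j) = \bar X_{i'}$ inside $X_j$, and note that the surjection $X_{i'} \onto \bar X_{i'}$ carries $\image(X_i \to X_{i'})$ onto $\bar X_i$; hence $\bar X_{i'}/\bar X_i$ is a quotient of $\coker(X_i \to X_{i'})$, which lies in $\C$ by hypothesis, so $\bar X_{i'}/\bar X_i \in \C$ since $\C$ is Serre. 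Finally, viewing the cocone $(X_i \to X)_{i \in I}$ as a morphism of $I$-diagrams from $(X_i)$ to the constant diagram $X$ and applying $\colim$, exactness of directed colimits yields $\colim_{i \in I} \bar X_i = \colim_{i \in I} \image(X_i \to X) = \image(\id_X) = X$. Thus $X$ is represented by an admissible relative Ind diagram, i.e.\ $X \in \Ind(\D,\C)$.

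The technical heart — and the step most likely to require care — is the stabilisation argument: one must legitimately descend from the directed system $j \mapsto K_{ij}$ to a countable chain in order to invoke Definition \ref{defi:noetherian}(b), and then check that the eventual image $\bar X_i$ genuinely lives in $\D$ (not merely in $\Lex\D$) and that the transition maps $\bar X_i \to \bar X_{i'}$ are admissible monomorphisms with quotients in $\C$. Everything else is routine bookkeeping with the exactness of directed colimits in $\Lex\D$.
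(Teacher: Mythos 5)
Your proof is correct and follows essentially the same strategy as the paper's: the paper also stabilises the kernels $K_{ij}\subset X_i$ using the relative Noetherian condition (the successive quotients $K_{ik}/K_{ij}$ lying in $\C$ since $\C$ is Serre) and then passes to the quotients $Y_i=X_i/K_i$, which are exactly your eventual images $\bar X_i=\image(X_i\to X)$, verifying that these form an admissible relative Ind diagram with colimit $X$. Your extraction of a countable chain from the directed system before invoking Definition \ref{defi:noetherian} and your explicit check that $\bar X_{i'}/\bar X_i\in\C$ are welcome elaborations of steps the paper treats more briefly, but the argument is the same.
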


\begin{proof}
If $|I|$ is finite, the assertion follows, since $X \cong X_{\max(I)}$. In order to verify the claim for infinite $I$ we will produce a relatively admissible Ind system $(Y_i)_{i \in I}$, satisfying $\colim_{i \in I} Y_i \cong X$, such that for $i \leq j$ we have $Y_i \hookrightarrow Y_j$ is a monomorphism with $Y_j/Y_i \in \C$. We will construct $Y_i = X_i/K_i$ as a quotient of $X_i$.

For $i \leq j$ we denote by $K_{ij}$ the kernel of $X_i \rightarrow X_j$. For $j \geq k$ we have $K_{ij} \subset K_{ik} \subset X_i$. Moreover, the quotient $K_{ik}/K_{ij}$ is a quotient of $K_{ik} \in \C$, and hence itself in $\C$. We conclude that the filtered poset of subobjects $K_{ij} \subset X_i$ must stabilise at a subobject $K_i \subset X_i$.

Note that for every $i_1 \leq i_2$ we have an induced map $K_{i_1} \rightarrow K_{i_2}$: indeed, there exist indices $j_1$, $j_2$, such that $K_{i_1} = \ker(X_{i_1} \rightarrow X_j)$ for every $j \geq i_1$, and similarly for $i_2$. Hence, we may choose $j \geq i_1,i_2$, and the universal property of kernels implies the existence of a unique map $K_{i_1} \rightarrow K_{i_2}$ as in the diagram
\[
\xymatrix{
0 \ar[r] & K_{i_1} \ar[r] \ar@{-->}[d] & X_{i_1} \ar[r] \ar[d] & X_j \ar[d] \\
0 \ar[r] & K_{i_2} \ar[r] & X_{i_2} \ar[r] & X_j.
}
\]
We denote the colimit $\colim_{i \in I} K_i \in \Lex \D$ by $K$; and similarly define $Y = \colim_{i \in I} X_i/K_i$. By exactness of colimits for Grothendieck abelian categories, we have a short exact sequence
$$0 \rightarrow K \rightarrow X \rightarrow Y \rightarrow 0.$$
However, one sees easily that $K = 0$, since by construction we know that for every $i \in I$, there exists $j \geq i$, such that $K_i \rightarrow K_j$ is the zero map. In order to conclude the proof, we have to show that for every $i_1 \leq i_2$ the induced map $Y_{i_1} \rightarrow Y_{i_2}$ is a monomorphism.

As above we may choose an index $j \geq i_1,i_2$, such that $K_{i_k} = \ker(X_{i_k} \rightarrow X_j)$ for $k = 1,2$. In particular, we may identify $Y_{i_k}$ with the image $X_{i_k}/K_{i_k} \subset X_j$, and the map $Y_{i_1} \rightarrow Y_{i_2}$ with the induced inclusion of images.
\end{proof}

\begin{example}(Non-abelian admissible Ind category)
Let $\mathsf{Vect}_{f}$ be the category of finite-dimensional $k$-vector
spaces and $\mathsf{Vect}$ the category of all $k$-vector spaces. There is a
morphism%
\[
\left.
{\textstyle\bigoplus\limits_{i=0\ldots\infty}^{\Ind(\mathsf{Vect})}}
\right.  k\overset{f}{\longrightarrow}\left.
{\textstyle\bigoplus\limits_{i=0\ldots\infty}^{\mathsf{Vect}}}
\right.  k\text{,}%
\]
where the first coproduct is formed in $\Ind(\mathsf{Vect}%
)$, and the latter in $\mathsf{Vect}$ itself. In terms of an admissible Ind diagram,
$f$ is $(i\mapsto(k^{\oplus i}\hookrightarrow k^{\oplus\infty}))$. This
morphism does not possess a cokernel, so that $\Ind(\mathsf{Vect})$ cannot be an abelian category. This example also
re-affirms that the inclusion $\C\hookrightarrow\Ind(\C)$
does not preserve colimits. If we work instead with the full Ind category $\mathsf{Ind}(\C)$, i.e. we allow also
Ind diagrams whose transition morphisms are not monics, $\mathsf{Ind}(\C)$ is always abelian once $\C$ is, and our
$f$ would permit a cokernel. Its transition morphisms would all be epics, so it is non-admissible.
\end{example}

\subsection{Further Properties}
The next lemma is a slight generalization of \cite[Proposition 3.14]{TateObjectsExactCats}; mutatis mutandi the proof is the same.
\begin{lemma}\label{lemma:SrelInd}
    For $k\ge 0$, there exist canonical equivalences
    \begin{align*}
        \Ind(S_k\D,S_k\C)&\to^\simeq S_k(\Ind(\D,\C)),\\
        \Pro(S_k\D,S_k\C)&\to^\simeq S_k(\Pro(\D,\C)),\\
        \elTate(S_k\D,S_k\C)&\to^\simeq S_k\elTate(\D,\C).
    \end{align*}
\end{lemma}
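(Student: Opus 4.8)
The strategy is to bootstrap from the absolute statements $\Ind(S_k\D)\simeq S_k\Ind(\D)$ and $\Pro(S_k\D)\simeq S_k\Pro(\D)$ of \cite[Prop. 3.14]{TateObjectsExactCats}, and to exhibit, in each of the three cases, both sides of the asserted equivalence as the \emph{same} full subcategory of the relevant absolute category. Recall that an object of $S_k\D$ is a staircase diagram $\mathbf A=(A_{i,j})_{0\le i\le j\le k}$ in $\D$, indexed by the arrow poset $\mathrm{Ar}[k]$, with $A_{i,i}=0$ and all squares short exact (so $A_{i,l}/A_{i,j}\cong A_{j,l}$); that the exact structure on $S_k\D$ is levelwise; and that $S_k\C\subset S_k\D$ is the full subcategory of those staircases all of whose entries lie in $\C$, which is extension-closed precisely because $\C$ is.

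For the first equivalence I would first unwind Definition \ref{defi:relative_tate}(a): an object of $\Ind(S_k\D,S_k\C)$ is a filtered diagram $\alpha\mapsto\mathbf A^{(\alpha)}=(A^{(\alpha)}_{i,j})$ of staircases whose transition maps are admissible monomorphisms in $S_k\D$ --- hence levelwise admissible monomorphisms --- with cokernels in $S_k\C$, i.e. levelwise in $\C$. For each fixed $(i,j)$ the system $(A^{(\alpha)}_{i,j})_\alpha$ is then an admissible relative Ind diagram in $(\D,\C)$, and $A_{i,j}:=\colim_\alpha A^{(\alpha)}_{i,j}\in\Ind(\D,\C)$; since directed colimits are exact in $\Lex\D$, the levelwise short exact squares pass to short exact squares in $\Ind(\D)$, so $(A_{i,j})$ is a staircase in $\Ind(\D)$ with all entries in $\Ind(\D,\C)$, i.e. an object of $S_k\Ind(\D,\C)$. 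This assignment is the restriction of the (entrywise-colimit) equivalence of \cite[Prop. 3.14]{TateObjectsExactCats}, so full faithfulness and canonicity come for free; it remains to check essential surjectivity. Starting from a staircase $(A_{i,j})$ of relative admissible Ind objects, I would build an Ind diagram $\alpha\mapsto(A^{(\alpha)}_{i,j})$ in $S_k\D$ presenting it flag-step by flag-step, exactly as in the proof of \cite[Prop. 3.14]{TateObjectsExactCats}: the straightening of exact sequences \cite[Prop. 3.12]{TateObjectsExactCats} turns each admissible extension $A_{0,i}\hookrightarrow A_{0,i+1}\twoheadrightarrow A_{i,i+1}$ of Ind objects into an Ind system of admissible extensions in $\D$, \cite[Lemma 3.9]{TateObjectsExactCats} keeps the index posets in sync, and the $3\times 3$-lemma together with extension-closedness of $\C$ --- precisely the argument in the proof of Lemma \ref{lemma:relative_exact} --- forces every levelwise transition quotient of the resulting diagram to lie in $\C$, so it is a relative Ind diagram representing $(A_{i,j})$.

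The second equivalence follows by applying the first to the pair $\C^{\op}\subset\D^{\op}$, together with the canonical identification $S_k(\mathcal A^{\op})\simeq(S_k\mathcal A)^{\op}$ and the defining formula $\Pro(-,-)=(\Ind((-)^{\op},(-)^{\op}))^{\op}$ of Definition \ref{defi:relative_tate}(b). For the third, write $\elTate(S_k\D,S_k\C)=\Ind(\Pro(S_k\D),S_k\C)$ as in Definition \ref{defi:relative_tate}(c); the absolute equivalence $\Pro(S_k\D)\simeq S_k\Pro(\D)$ restricts to the identity on constant Pro-systems, hence carries $S_k\C\subset\Pro(S_k\D)$ onto $S_k\C\subset S_k\Pro(\D)$, where $\C\subset\Pro(\D)$ is the embedding coming from $\C\subset\D\subset\Pro(\D)$ (extension-closed, since $\D\subset\Pro(\D)$ is right $s$-filtering dually to \cite[Prop. 3.10]{TateObjectsExactCats}, and extension-closedness composes). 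Applying the first equivalence to this pair then gives $\Ind(S_k\Pro(\D),S_k\C)\simeq S_k\Ind(\Pro(\D),\C)=S_k\elTate(\D,\C)$.

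The step I expect to be the main obstacle is essential surjectivity in the first equivalence: one must promote a staircase of relative admissible Ind objects to a \emph{single} Ind diagram valued in $S_k\D$ --- i.e. one whose levelwise squares are genuinely short exact --- rather than a mere compatible family of presentations of the individual entries. This coherence problem is already solved in \cite[Prop. 3.14]{TateObjectsExactCats} by iterated application of the straightening of exact sequences, so that argument is reused essentially verbatim; the only new ingredient is the one-line check, as in Lemma \ref{lemma:relative_exact}, that extension-closedness of $\C$ keeps the relative condition intact up the flag. This is exactly why the lemma is a ``slight generalization'' whose proof goes through ``mutatis mutandis''.
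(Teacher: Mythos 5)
Your proposal is correct and matches the paper's intent exactly: the paper gives no proof beyond the remark that the lemma is ``a slight generalization of [Prop.~3.14 of the cited work]; mutatis mutandi the proof is the same,'' and your write-up is precisely that adaptation --- restricting the entrywise-colimit equivalence, and using straightening plus the $3\times3$-lemma and extension-closedness of $\C$ to keep the relative condition, with the Pro and Tate cases following by duality and by unwinding Definition \ref{defi:relative_tate}(c).
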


In Definition \ref{defi:elTate} we defined elementary Tate objects as the full subcategory of admissible Ind Pro objects which possess a lattice. The concept of lattices also exists for relative elementary Tate objects and is of equal importance.

\begin{definition}\label{defi:relative_gr}\mbox{}
    \begin{enumerate}
        \item For $V \in \elTate(\D,\C)$ we say that an admissible monic $L \hookrightarrow V$ is a \emph{relative lattice}, if $L \in \Pro(\D)$, and $V/L \in \Ind(\C)$.
        \item For two relative lattices $L,L'$, we say that $L \leq L'$, if the inclusion $L \hookrightarrow V$ factors through $L' \hookrightarrow V$ via an admissible monic $L \hookrightarrow L'$ (by Lemma \ref{lemma:reflected}).
        \item Define the \emph{relative Sato Grassmannian} $\Gr_{\C}(V)$ to be the partially ordered set of relative lattices of $V$.
    \end{enumerate}
\end{definition}

It follows directly from the definition that every elementary relative Tate object has a lattice. In fact, the existence of lattices characterises elementary relative Tate objects.

\begin{lemma}
For $V \in \Ind \Pro(\D)$ the following assertions are equivalent:
\begin{itemize}
\item[(a)] We have $V \in \elTate(\D,\C)$.
\item[(b)] There exists a relative lattice, that is, a short exact sequence $L \hookrightarrow V \twoheadrightarrow V/L$ with $L \in \Pro(\D)$ and $V/L \in \Ind(\C)$.
\end{itemize}
\end{lemma}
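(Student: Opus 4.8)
The plan is to prove (a)$\Leftrightarrow$(b) by the relative analogue of the argument comparing the two descriptions of elementary Tate objects in the absolute case (\cite[Theorem 5.5]{TateObjectsExactCats}). Throughout it is convenient to work inside the Grothendieck abelian category $\Lex(\Pro(\D))$, in which $\Ind\Pro(\D)$ is a full extension-closed subcategory and filtered colimits are exact. I will also use freely that $\Pro(\D)\hookrightarrow\Ind\Pro(\D)$ is left $s$-filtering (\cite[Proposition 3.10]{TateObjectsExactCats}), hence in particular extension-closed, and that $\C\hookrightarrow\D\hookrightarrow\Pro(\D)$ are exact, extension-closed embeddings, so that a short exact sequence in $\Pro(\D)$ all of whose terms lie in $\C$ is already a conflation in $\C$.

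For (a)$\Rightarrow$(b), which is the easy direction: given $V\in\elTate(\D,\C)=\Ind(\Pro(\D),\C)$, I would choose a relative admissible Ind presentation $X\colon I\to\Pro(\D)$ with $V\cong\colim_i X_i$ and $X_j/X_i\in\C$ for $i<j$. Fix any $i_0\in I$; passing to the cofinal subposet $I_{\ge i_0}$ I may assume $i_0$ is least, and set $L:=X_{i_0}\in\Pro(\D)$. By Lemma \ref{lemma:ind_mono}, $L\hookrightarrow V$ is an admissible monomorphism, and taking the colimit of the functorial short exact sequences $X_{i_0}\hookrightarrow X_i\twoheadrightarrow X_i/X_{i_0}$ identifies $V/L$ with $\colim_{i\ge i_0}(X_i/X_{i_0})$. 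By the $3\times 3$ Lemma (\cite[Cor. 3.6]{MR2606234}) each transition map $X_i/X_{i_0}\hookrightarrow X_j/X_{i_0}$ is an admissible monomorphism with cokernel $X_j/X_i\in\C$; since all terms lie in $\C$ this is an admissible Ind diagram in $\C$, whence $V/L\in\Ind(\C)$, which is (b).

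For the converse (b)$\Rightarrow$(a), which carries the content: given a short exact sequence $L\hookrightarrow V\twoheadrightarrow V/L$ with $L\in\Pro(\D)$ and $V/L\in\Ind(\C)$, I would pick an admissible Ind presentation $(W_i)_{i\in I}$ of $V/L$ in $\C$, so that each $W_i\hookrightarrow V/L$ is an admissible monomorphism (Lemma \ref{lemma:ind_mono}), and form the pullbacks $X_i:=V\times_{V/L}W_i$. Pulling back the admissible monomorphism $W_i\hookrightarrow V/L$ along the admissible epimorphism $V\twoheadrightarrow V/L$ shows that $X_i\hookrightarrow V$ is an admissible monomorphism and $X_i\twoheadrightarrow W_i$ an admissible epimorphism with kernel $L$; thus $X_i$ is an extension of $W_i\in\C\subset\Pro(\D)$ by $L\in\Pro(\D)$, hence $X_i\in\Pro(\D)$. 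For $i\le j$ the canonical map $X_i\to X_j$ is, using that $X_j\to W_j\to V/L$ agrees with $X_j\to V\to V/L$, identified with the pullback of $W_i\hookrightarrow W_j$ along $X_j\twoheadrightarrow W_j$, hence is an admissible monomorphism with cokernel $W_j/W_i\in\C$. Taking the colimit of the functorial sequences $L\hookrightarrow X_i\twoheadrightarrow W_i$ yields a short exact sequence $L\hookrightarrow\colim_i X_i\twoheadrightarrow V/L$ in $\Lex(\Pro(\D))$, and the induced morphism $\colim_i X_i\to V$ of extensions of $V/L$ by $L$ is an isomorphism by the five lemma. Hence $V\cong\colim_i X_i$ with $(X_i)_{i\in I}$ a relative admissible Ind diagram in $\Pro(\D)$, i.e. $V\in\Ind(\Pro(\D),\C)=\elTate(\D,\C)$.

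The one genuinely delicate step will be the verification in (b)$\Rightarrow$(a) that the pullbacks $X_i$ assemble into a \emph{relative} admissible Ind diagram — both that each $X_i\hookrightarrow X_j$ is an admissible monomorphism and that its cokernel lies in $\C$. The clean way through this is to recognise $X_i$ as $X_j\times_{W_j}W_i$, after which admissibility and the identification of the cokernel with $W_j/W_i$ become instances of the stability of admissible monomorphisms and of their cokernels under pullback along admissible epimorphisms. Everything else reduces to routine diagram chases in $\Lex(\Pro(\D))$ together with the citations above.
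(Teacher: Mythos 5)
Your proof is correct, and one half of it follows a genuinely different route from the paper's. For (a)$\Rightarrow$(b) you argue exactly as the paper does: take a relative admissible Ind presentation $(X_i)_{i\in I}$ of $V$ in $\Pro(\D)$, invoke Lemma \ref{lemma:ind_mono} to see that $X_{i_0}\hookrightarrow V$ is an admissible monic, and identify the quotient with $\colim_{i\ge i_0}X_i/X_{i_0}\in\Ind(\C)$. For (b)$\Rightarrow$(a), however, the paper simply cites Lemma \ref{lemma:relative_exact}: since $\elTate(\D,\C)$ is extension-closed in $\Ind\Pro(\D)$ and contains both $\Pro(\D)$ and $\Ind(\C)$, any extension of an object of $\Ind(\C)$ by an object of $\Pro(\D)$ already lies in $\elTate(\D,\C)$ --- a two-line deduction. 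You instead construct the relative admissible presentation by hand, pulling back an admissible Ind presentation $(W_i)$ of $V/L$ along $V\twoheadrightarrow V/L$ and checking that the $X_i=V\times_{V/L}W_i$ form a relative admissible Ind diagram in $\Pro(\D)$ with colimit $V$. This is in effect a direct re-proof, in this special case, of the extension-closedness that Lemma \ref{lemma:relative_exact} establishes via the straightening construction; your argument is more explicit and self-contained (and produces an actual presentation of $V$, which can be useful downstream), while the paper's is shorter because it reuses machinery already in place. The one step you flag as delicate --- that $X_i\hookrightarrow X_j$ is an admissible monic with cokernel $W_j/W_i$ --- is indeed the standard stability of admissible monics and their cokernels under pullback along admissible epics, so no gap there.
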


\begin{proof}
We have seen in Lemma \ref{lemma:relative_exact} that $\elTate(\D,\C) \subset \Ind \Pro(\D)$ is an extension-closed subcategory. Since $\Pro(\D) \subset \elTate(\D,\C) \supset \Ind(\C)$, we see that (b) implies (a).

Conversely, if $V$ is in $\elTate(\D,\C)$, there exists a presentation as $\colim_{i \in I} V_i$, where $(V_i)_{i \in I}$ is a directed system in $\Pro(\D)$, with $V_j/V_i \in \C$ for $j \geq i$. Lemma \ref{lemma:ind_mono} implies that $V_i \hookrightarrow V$ is an admissible monomorphism in $\Ind \Pro(\D)$. The quotient is given by $\colim_{j \geq i} V_j/V_i \in \Ind(\C)$. This shows that $V$ possesses a relative lattice.
\end{proof}

\begin{corollary}
The smallest extension-closed subcategory of $\Ind \Pro(\D)$ which contains $\Pro(\D)$ and $\Ind(\C)$ is $\elTate(\D,\C)$.
\end{corollary}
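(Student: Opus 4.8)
The plan is to verify the two halves of the statement in turn: that $\elTate(\D,\C)$ is itself an extension-closed subcategory of $\Ind\Pro(\D)$ containing $\Pro(\D)$ and $\Ind(\C)$, and that any other such subcategory contains it.

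For the first half, recall that $\elTate(\D,\C) = \Ind(\Pro(\D),\C)$ by Definition \ref{defi:relative_tate}, so Lemma \ref{lemma:relative_exact} tells us that it is closed under extensions inside $\Ind\Pro(\D)$. It contains $\Pro(\D)$, since a constant diagram is (vacuously) a relative admissible Ind diagram; and it contains $\Ind(\C)$, since $\C \subset \Pro(\D)$ and every admissible Ind diagram in $\C$ has all of its subquotients in $\C$. Both containments were already invoked in the proof of the lemma immediately preceding this corollary.

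For minimality, let $\mathcal{E} \subseteq \Ind\Pro(\D)$ be an extension-closed subcategory with $\Pro(\D) \subseteq \mathcal{E}$ and $\Ind(\C) \subseteq \mathcal{E}$, and take $V \in \elTate(\D,\C)$. By the characterisation of relative elementary Tate objects proven just above, $V$ admits a relative lattice, i.e.\ a short exact sequence
\[
L \hookrightarrow V \twoheadrightarrow V/L
\]
in $\Ind\Pro(\D)$ with $L \in \Pro(\D)$ and $V/L \in \Ind(\C)$. Both outer terms then lie in $\mathcal{E}$ by hypothesis, and since $\mathcal{E}$ is extension-closed in $\Ind\Pro(\D)$, we conclude $V \in \mathcal{E}$. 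Hence $\elTate(\D,\C) \subseteq \mathcal{E}$, which combined with the first half gives the claim.

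The only point requiring a little care is that the sequence witnessing a relative lattice is genuinely a short exact sequence in the exact category $\Ind\Pro(\D)$ — this is what permits us to apply extension-closedness of $\mathcal{E}$ — but that is exactly what the preceding lemma (ultimately via Lemma \ref{lemma:ind_mono}, which upgrades $L \hookrightarrow V$ to an admissible monomorphism) supplies. So there is no genuine obstacle here: the entire content of the corollary is repackaged from the existence of relative lattices.
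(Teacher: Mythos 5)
Your proof is correct and is exactly the argument the paper intends: the corollary is stated without proof precisely because it is an immediate consequence of the preceding lemma, whose two directions supply, respectively, that $\elTate(\D,\C)$ is an extension-closed subcategory containing $\Pro(\D)$ and $\Ind(\C)$, and that every object of $\elTate(\D,\C)$ is an extension of an object of $\Ind(\C)$ by an object of $\Pro(\D)$, forcing it into any such subcategory. No gaps.
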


We now record the key property of relative lattices.

\begin{proposition}\label{prop:rel_Gr-directed}
    If $\D$ is idempotent complete, then the relative Sato Grassmannian $\Gr_{\C}(V)$ is a directed partially ordered set.
\end{proposition}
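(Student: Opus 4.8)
The plan is to adapt the argument sketched for the absolute statement \cite[Theorem~6.7]{TateObjectsExactCats}, but to replace the direct-sum-and-factorisation step by the canonical relative Ind presentation of $V$. Since $V \in \elTate(\D,\C) = \Ind(\Pro(\D),\C)$, Definition \ref{defi:relative_tate} lets us choose an admissible Ind diagram $(V_j)_{j \in I}$ in $\Pro(\D)$, indexed by a directed poset $I$, with $V \cong \colim_{j \in I} V_j$ and $V_k/V_j \in \C$ for all $j \le k$. Each structure map $V_j \hookrightarrow V$ is then a relative lattice: it is an admissible monomorphism by Lemma \ref{lemma:ind_mono}, its source lies in $\Pro(\D)$, and $V/V_j \cong \colim_{k \ge j} V_k/V_j \in \Ind(\C)$. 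Moreover, for $j \le k$ the transition map $V_j \hookrightarrow V_k$ is an admissible monomorphism in $\Pro(\D)$ with cokernel $V_k/V_j \in \C$, so $V_j \le V_k$ in $\Gr_{\C}(V)$; hence $\{V_j\}_{j \in I}$ is a directed subset of $\Gr_{\C}(V)$, and in particular $\Gr_{\C}(V) \neq \emptyset$.

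I claim that this subset is cofinal in $\Gr_{\C}(V)$, from which directedness follows formally: given relative lattices $L_1, L_2$, choose $j_1, j_2 \in I$ with $L_1 \le V_{j_1}$ and $L_2 \le V_{j_2}$, then pick $i \in I$ with $i \ge j_1, j_2$, so that $L_1 \le V_{j_1} \le V_i$ and $L_2 \le V_{j_2} \le V_i$. Thus it remains to prove: for every relative lattice $L \hookrightarrow V$ there is some $i \in I$ with $L \le V_i$. Since $L \in \Pro(\D)$, the morphism $L \hookrightarrow V \cong \colim_{j} V_j$ in $\Ind\Pro(\D)$ factors through one of the structure maps, yielding $L \xrightarrow{g} V_i \hookrightarrow V$ whose composite is the inclusion of $L$. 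In particular $g$ is a categorical monomorphism, and it suffices to upgrade $g$ to an admissible monomorphism in $\Pro(\D)$: its cokernel then automatically lies in $\Pro(\D)$, so that $L \le V_i$.

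This last step is where idempotent completeness of $\D$ enters, and it is the main obstacle. From $\C \subset \D$ we obtain $\elTate(\D,\C) \subset \elTate(\D)$, so $V$ is an elementary Tate object over the idempotent complete exact category $\D$, and both $L$ and $V_i$ are (absolute) lattices in $V$. Applying the non-trivial result \cite[Lemma~6.9]{TateObjectsExactCats} to the morphism $g \colon L \to V_i$ of lattices — which is compatible with the two inclusions into $V$ — shows that $g$ is an admissible monomorphism in $\Ind\Pro(\D)$. Since $\Pro(\D) \subset \Ind\Pro(\D)$ is left $s$-filtering by \cite[Proposition~3.10]{TateObjectsExactCats}, Lemma \ref{lemma:reflected} then shows that $g$ is an admissible monomorphism already in $\Pro(\D)$, which completes the argument. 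Beyond this admissibility point, everything is routine bookkeeping with the relative Ind presentation, the computation of the quotients $V/V_j$, and the reflection of admissible monomorphisms along the $s$-filtering inclusion $\Pro(\D) \subset \Ind\Pro(\D)$.
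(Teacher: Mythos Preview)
Your proof is correct and follows essentially the same route as the paper's. The only cosmetic differences are that you establish cofinality of the diagram $\{V_j\}$ by treating one relative lattice at a time (the paper bundles $L_1\oplus L_2$ together in a single factorisation), and you add an explicit reflection step via Lemma~\ref{lemma:reflected} after invoking \cite[Lemma~6.9]{TateObjectsExactCats}, whereas the paper states the conclusion directly in $\Pro(\D)$.
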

\begin{proof}
    Abuse notation and let $V\colon I\to\Pro(\D)$ be an admissible relative Tate diagram representing $V$. Note that, by definition, for all $i\in I$, $V_i\in\Gr_{\C}(V)$. Now let $L_1,L_2\in\Gr_{\C}(V)$. Because $\Pro(\D)$ is left filtering in $\elTate(\D,\C)$, there exists $i\in I$ such that we have a commuting triangle
    \begin{equation*}
        \xymatrix{
          L_1\oplus L_2 \ar[rr] \ar[dr]  &  &    V_i \ar@{^{(}->}[dl]    \\
          & V                 }
    \end{equation*}
    in $\elTate(\D,\C)$. If $\D$ is idempotent complete, then Lemma 6.9 of \cite{TateObjectsExactCats} shows that for $a=1,2$ the map $L_a\to V_i$ is an admissible monic in $\Pro(\D)$.
\end{proof}

\begin{lemma}\label{lemma:relGrinC}
    Let $\C\subset\D$ be a left or right s-filtering subcategory. Let $V\in\elTate(\D,\C)$ and let $L_1\le L_2\in\Gr_{\C}(V)$. Then $L_2/L_1\in\C$.
\end{lemma}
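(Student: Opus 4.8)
The plan is to prove that $L_2/L_1$ lies simultaneously in $\Pro(\D)$ and in $\Ind(\C)$, regarded as full subcategories of $\Ind\Pro(\D)$, and then to conclude with Lemma~\ref{lemma:IndCD} applied to the exact, fully faithful embedding $\C\hookrightarrow\Pro(\D)$, which identifies $\C$ with $\Ind(\C)\cap\Pro(\D)$ inside $\Ind\Pro(\D)$.

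First, unwind the hypothesis. Since $L_1,L_2\in\Gr_{\C}(V)$ are relative lattices, we have $L_1,L_2\in\Pro(\D)$ and $V/L_1,V/L_2\in\Ind(\C)$, and since $L_1\le L_2$ the admissible monic $L_1\hookrightarrow V$ factors through $L_2$ via an admissible monic $L_1\hookrightarrow L_2$; as $\Pro(\D)\subset\Ind\Pro(\D)$ is left $s$-filtering (\cite[Prop.~3.10]{TateObjectsExactCats}), Lemma~\ref{lemma:reflected} shows $L_1\hookrightarrow L_2$ is already admissible in $\Pro(\D)$, so its cokernel $L_2/L_1$ lies in $\Pro(\D)$ and agrees with the cokernel computed in $\Ind\Pro(\D)$. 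Applying the isomorphism theorems for exact categories to the composite of admissible monics $L_1\hookrightarrow L_2\hookrightarrow V$ (cf.\ \cite[\S 3]{MR2606234}) yields a short exact sequence
\begin{equation*}
    0\to L_2/L_1\to V/L_1\to V/L_2\to 0
\end{equation*}
in $\Ind\Pro(\D)$ in which $V/L_1$ and $V/L_2$ lie in $\Ind(\C)\subset\Ind(\D)$.

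The crux is to improve this to $L_2/L_1\in\Ind(\C)$, which I would obtain by two applications of Lemma~\ref{lemma:IndCinIndD}. The inclusion $\D\subset\Pro(\D)$ is right $s$-filtering (this is dual to the left $s$-filtering inclusion $\D\subset\Ind(\D)$ of \cite[Prop.~3.10]{TateObjectsExactCats}), so Lemma~\ref{lemma:IndCinIndD}, applied with the pair $(\Pro(\D),\D)$ in place of $(\D,\C)$ to the sequence above and using that its middle term $V/L_1$ lies in $\Ind(\D)$, forces $L_2/L_1\in\Ind(\D)$. Since $\Ind$ carries right $s$-filtering inclusions to right $s$-filtering inclusions (see \cite[Prop.~5.9]{TateObjectsExactCats}), the inclusion $\Ind(\D)\subset\Ind\Pro(\D)$ is right $s$-filtering and therefore reflects admissible monics and epics by Lemma~\ref{lemma:reflected}; hence the displayed sequence is short exact already in $\Ind(\D)$. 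A second application of Lemma~\ref{lemma:IndCinIndD}, this time to the given $s$-filtering pair $(\D,\C)$ and to this sequence in $\Ind(\D)$ whose middle term $V/L_1$ lies in $\Ind(\C)$, then gives $L_2/L_1\in\Ind(\C)$. Combining this with $L_2/L_1\in\Pro(\D)$ and the identification of the first paragraph completes the argument.

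I expect the only real obstacle to be the bookkeeping required to move the short exact sequence between the two ambient exact categories $\Ind\Pro(\D)$ and $\Ind(\D)$, which is why the argument is routed through the stability of $s$-filtering inclusions under $\Ind$. One can also avoid that input in the case that $\C\subset\D$ is right $s$-filtering: pick a presentation $V/L_1\cong\colim_k C_k$ with $C_k\in\C$; the admissible monic $L_2/L_1\hookrightarrow V/L_1$ factors through some $C_k$ (as $L_2/L_1\in\Pro(\D)$ is an object of the base category of $\Ind\Pro(\D)$), and right filtering of $\C$ in $\Pro(\D)$ together with the fact that a morphism that is at once monic and an admissible epic is an isomorphism shows $L_2/L_1\in\C$ directly; the left $s$-filtering case follows from this by the self-duality of relative elementary Tate objects, which interchanges left and right $s$-filtering.
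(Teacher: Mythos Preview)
Your main argument is correct and follows the same skeleton as the paper's proof: form the Noether sequence $L_2/L_1\hookrightarrow V/L_1\twoheadrightarrow V/L_2$, use Lemma~\ref{lemma:IndCinIndD} to land $L_2/L_1$ in $\Ind(\C)$, then intersect via Lemma~\ref{lemma:IndCD}. The difference is in the first step. The paper simply invokes \cite[Proposition~6.6]{TateObjectsExactCats} (the non-relative statement that the quotient of nested lattices lies in the base category) to get $L_2/L_1\in\D$ outright, and then applies Lemma~\ref{lemma:IndCD} to the embedding $\C\hookrightarrow\D$. You instead only record $L_2/L_1\in\Pro(\D)$, which forces an extra pass through Lemma~\ref{lemma:IndCinIndD} for the pair $\D\subset\Pro(\D)$ and the bookkeeping of transferring the Noether sequence down to $\Ind(\D)$ (which the paper tacitly needs as well but does not spell out). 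Your route is thus slightly longer but more self-contained, trading the external citation of \cite[Proposition~6.6]{TateObjectsExactCats} for the stability of right $s$-filtering under $\Ind$.

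One caveat on your closing remark: the claim that the left $s$-filtering case follows ``by self-duality of relative elementary Tate objects'' is not justified. The definition $\elTate(\D,\C)=\Ind(\Pro(\D),\C)$ is manifestly asymmetric in the Ind and Pro directions, and there is no duality here that swaps left and right $s$-filtering hypotheses on $\C\subset\D$. Your main argument already handles both cases uniformly (Lemma~\ref{lemma:IndCinIndD} is stated for either hypothesis), so this alternative is unnecessary; but as written that final reduction does not go through.
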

\begin{proof}
    By \cite[Proposition 6.6]{TateObjectsExactCats}, we know that $L_2/L_1\in\D$. Lemma \ref{lemma:IndCinIndD} and Noether's Lemma show that $L_2/L_1\in\Ind(\C)$. By Lemma \ref{lemma:IndCD}, we have $L_2/L_1\in\C$.
\end{proof}

\begin{lemma}\label{lemma:DicCic}
    Let $\D$ be idempotent complete, and let $\C\subset\D$ be a left or right filtering subcategory. Then $\C$ is idempotent complete.
\end{lemma}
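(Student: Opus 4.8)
The plan is to reduce to the left-filtering case by duality, and then to show directly that every idempotent of $\C$ splits inside $\C$. Idempotent completeness is self-dual: an idempotent $e$ on an object $X$ in $\C$ (resp.\ $\D$) is literally an idempotent on $X$ in $\C^{\op}$ (resp.\ $\D^{\op}$), and reversing the arrows of a splitting yields a splitting. Since ``$\C$ right filtering in $\D$'' means ``$\C^{\op}$ left filtering in $\D^{\op}$'', and $\D$ is idempotent complete if and only if $\D^{\op}$ is, it suffices to treat the case where $\C\hookrightarrow\D$ is left filtering.

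So I would take an idempotent $e\colon X\to X$ with $X\in\C$ and $e^2=e$. As $\D$ is idempotent complete, $e$ splits in $\D$: there is $Y\in\D$ with morphisms $i\colon Y\to X$ and $r\colon X\to Y$ such that $r i=\id_Y$ and $i r=e$. The key step is to apply the left filtering condition (Definition~\ref{defi:filt}(a)) to the retraction $r\colon X\to Y$, whose \emph{source} $X$ lies in $\C$; this produces a factorisation $X\xrightarrow{q}X'\xrightarrow{m}Y$ with $X'\in\C$ and $m$ an admissible monomorphism, hence in particular a monomorphism. Then I would observe that $m$ is also a split epimorphism, since $m\circ(q i)=(m q)\circ i=r i=\id_Y$; and a monomorphism that is a split epimorphism is an isomorphism (if $m$ is mono with $m\circ s=\id$, then $m\circ(s m)=m=m\circ\id$ forces $s m=\id$). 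Hence $Y\cong X'\in\C$. Transporting $i$ and $r$ along $m$, i.e.\ setting $i':=i\circ m\colon X'\to X$ and $r':=m^{-1}\circ r\colon X\to X'$, one computes $r'i'=\id_{X'}$ and $i'r'=e$, and $i',r'$ are automatically morphisms of $\C$ because $\C$ is a full subcategory. Thus $e$ splits in $\C$, which settles the left-filtering case, and the right-filtering case follows by the duality above.

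I do not anticipate a genuine obstacle; the one point that requires care is the choice of which map to feed into the filtering hypothesis. Left filtering factors morphisms whose \emph{source} lies in $\C$, so one must use the retraction $r\colon X\to Y$ (with $X\in\C$) rather than the section $i\colon Y\to X$, whose source is the not-yet-identified object $Y$. Once that is done, the admissible monomorphism produced by the hypothesis is forced to be invertible by a purely formal argument, and no closure property of $\C$ beyond fullness is needed --- in particular one does not need $\C$ to be closed under direct summands or under subobjects in $\D$.
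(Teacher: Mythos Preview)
Your proof is correct. Both you and the paper begin with the same duality reduction to the left-filtering case, but the core step differs. The paper splits the idempotent in $\D$ to obtain the summand $Y=\ker(p)\hookrightarrow X$ and then asserts that a left filtering subcategory is \emph{closed under admissible subobjects} in $\D$, so that $Y\in\C$ immediately. That closure property is not literally the condition stated in Definition~\ref{defi:filt}(a); it is part of Schlichting's original formulation, to which the paper's definition is equivalent via the appendix the authors cite just before Definition~\ref{defi:filt}. Your argument sidesteps this appeal by applying Definition~\ref{defi:filt}(a) directly to the retraction $r\colon X\to Y$ (whose source lies in $\C$) and observing that the resulting admissible monic $X'\hookrightarrow Y$ is simultaneously split epic, hence an isomorphism. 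Your route is thus more self-contained relative to the definition actually stated in the paper; the paper's route is terser once one grants the closure under subobjects.
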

\begin{proof}
By categorical duality, we may reduce the claim to the assumption that $\C$ is left filtering in $\D$. Let $X\to^p X$ be an idempotent in $\C$. Because $\D$ is idempotent complete, there exists $Y\in\D$ such that $Y=\ker(p)$. Because $\C \subset \D$ is left filtering, $\C$ is closed under subobjects in $\D$. Hence, $Y \hookrightarrow X$ also belongs to $\C$, which implies that $\ker(p) \in \C$.
\end{proof}

As a next step, we investigate the filtering properties of the inclusions $\D \hookrightarrow \Ind(\D,\C)$, $\Pro(\D) \hookrightarrow \elTate(\D,\C)$, and study the relation between the categorical quotients.

\begin{proposition}\label{prop:z2}
    Let $\C\subset\D$ be a subcategory which is closed under extensions. The inclusions $\D \hookrightarrow \Ind(\D,\C)$, and $\Pro(\D) \hookrightarrow \elTate(\D,\C)$ are left $s$-filtering. The inclusions $\Ind(\C)\to\Ind(\D,\C)$ and $\Ind(\D,\C)\to\elTate(\D,\C)$ induce exact equivalences
    \begin{equation}\label{eqn:rel_ind_quot}
        \Ind(\C)/\C \to^\simeq \Ind(\D,\C)/\D \to^\simeq \elTate(\D,\C)/\Pro(\D).
    \end{equation}
\end{proposition}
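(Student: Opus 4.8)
The plan is to establish the two filtering assertions first and then to deduce the equivalences by checking that the functors induced on the Schlichting quotients are essentially surjective and fully faithful.

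\emph{The filtering inclusions.} I would prove that $\D\hookrightarrow\Ind(\D,\C)$ is left $s$-filtering and then obtain the statement for $\Pro(\D)\hookrightarrow\elTate(\D,\C)=\Ind(\Pro(\D),\C)$ by repeating the argument with $\Pro(\D)$ in place of $\D$, using that $\Pro(\D)\hookrightarrow\Ind(\Pro(\D))$ is left $s$-filtering \cite{TateObjectsExactCats}. For \emph{left filtering}: given $f\colon X\to Y$ in $\Ind(\D,\C)$ with $X\in\D$ and a relative admissible Ind diagram $(Y_i)_{i\in I}$ representing $Y$, the map $f$ factors as $X\to Y_i\hookrightarrow Y$ for some $i$, with $Y_i\in\D$; by Lemma \ref{lemma:ind_mono} the inclusion $Y_i\hookrightarrow Y$ is an admissible monic in $\Ind(\D)$ with cokernel $\colim_{j\ge i}Y_j/Y_i$, and this cokernel lies in $\Ind(\C)\subseteq\Ind(\D,\C)$ since each $Y_j/Y_i\in\C$ and the transition maps are admissible monics with cokernel $Y_{j'}/Y_j\in\C$ (Noether's lemma). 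Hence $Y_i\hookrightarrow Y$ is admissible in $\Ind(\D,\C)$. For \emph{left special}: given an admissible epi $\pi\colon G\onto Z$ in $\Ind(\D,\C)$ with $Z\in\D$, regard it in $\Ind(\D)$; since $\D\hookrightarrow\Ind(\D)$ is left special \cite{TateObjectsExactCats}, there is a short exact sequence $X\into Y\onto Z$ in $\D$ mapping to $\ker\pi\into G\onto Z$, and because $\Ind(\D,\C)$ is closed under extensions in $\Ind(\D)$ (Lemma \ref{lemma:relative_exact} and the corollary after it) while $\pi$ is admissible in $\Ind(\D,\C)$, this bottom row already lives in $\Ind(\D,\C)$, as required.

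\emph{Setting up the equivalences.} The inclusions $\Ind(\C)\hookrightarrow\Ind(\D,\C)$ and $\Ind(\D,\C)\hookrightarrow\elTate(\D,\C)$ are exact and fully faithful (an admissible Ind diagram in $\C$ is a relative $(\D,\C)$-diagram, and a relative $(\D,\C)$-diagram is a relative $(\Pro(\D),\C)$-diagram as $\D\hookrightarrow\Pro(\D)$ is exact; compare \cite{TateObjectsExactCats}), and they carry $\C$ into $\D$, respectively $\D$ into $\Pro(\D)$. By the filtering part the quotients $\Ind(\D,\C)/\D$ and $\elTate(\D,\C)/\Pro(\D)$ are defined, and we obtain induced exact functors $\Phi_1\colon\Ind(\C)/\C\to\Ind(\D,\C)/\D$ and $\Phi_2\colon\Ind(\D,\C)/\D\to\elTate(\D,\C)/\Pro(\D)$. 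I claim each is an equivalence; I describe $\Phi_2$, the case of $\Phi_1$ being the same with the roles of Ind and Pro interchanged.

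\emph{Essential surjectivity, faithfulness, fullness.} For essential surjectivity: any $V\in\elTate(\D,\C)$ has a relative lattice $L$ (by the lemma after Definition \ref{defi:relative_gr}), so $V\onto V/L$ has kernel $L\in\Pro(\D)$ and hence $V\cong V/L$ in the quotient, with $V/L\in\Ind(\C)\subseteq\Ind(\D,\C)$. For faithfulness it suffices to show that a morphism $f\colon X\to Y$ of $\Ind(\D,\C)$ which becomes zero in $\elTate(\D,\C)/\Pro(\D)$ is already zero in $\Ind(\D,\C)/\D$: such an $f$ factors through an object $P\in\Pro(\D)$, and by Proposition \ref{Prop_LatticeLeftFiltAndRightFilt}(2), applied in $\elTate(\D)$ with target in $\Ind(\D)$, the map $P\to Y$ factors as $P\onto P/L\to Y$ with $L$ a lattice in $P$; since $\Pro(\D)\hookrightarrow\elTate(\D)=\elTate(\D,\D)$ is left $s$-filtering (the case $\C=\D$ of the statement being proved), Lemma \ref{lemma:reflected} makes $L\into P$ admissible in $\Pro(\D)$, so $P/L\in\Pro(\D)$, while also $P/L\in\Ind(\D)$; hence $P/L\in\Pro(\D)\cap\Ind(\D)=\D$ by Lemma \ref{lemma:IndCD}, so $f$ factors through an object of $\D$ and vanishes in $\Ind(\D,\C)/\D$. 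For fullness: a morphism of $\elTate(\D,\C)/\Pro(\D)$ between objects of $\Ind(\D,\C)$ is represented by a fraction built from a morphism of $\elTate(\D,\C)$ and weak equivalences inverting $\Pro(\D)$-kernels and $\Pro(\D)$-cokernels; replacing the intermediate Tate objects by their quotients by relative lattices (which lie in $\Ind(\C)\subseteq\Ind(\D,\C)$), noting that an elementary weak equivalence with $\Pro(\D)$-kernel/cokernel whose endpoints lie in $\Ind(\D,\C)$ automatically has $\D$-kernel/cokernel (again $\Pro(\D)\cap\Ind(\D)=\D$), and using Proposition \ref{Prop_LatticeLeftFiltAndRightFilt} to move the honest morphism into $\Ind(\D,\C)$, one rewrites the fraction as the image of a fraction in $\Ind(\D,\C)/\D$. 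Each $\Phi_i$ is exact by construction, hence an exact equivalence, and composing yields (\ref{eqn:rel_ind_quot}).

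\emph{Main obstacle.} The delicate step is fullness, together with the accompanying manipulation of fractions in the Schlichting quotients: one must ensure that an arbitrary fraction representing a morphism of $\elTate(\D,\C)/\Pro(\D)$ (respectively $\Ind(\D,\C)/\D$) can be transported coherently into the smaller quotient, independently of the many auxiliary choices of lattices. The inputs that make this work are the existence of relative lattices, the factorisation Proposition \ref{Prop_LatticeLeftFiltAndRightFilt}, and the identity $\Pro(\D)\cap\Ind(\D)=\D$ of Lemma \ref{lemma:IndCD}; everything else reduces to routine applications of Noether's lemma and the $3\times 3$ lemma.
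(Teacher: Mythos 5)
Your treatment of the two left $s$-filtering claims matches the paper's: left specialness is inherited from $\D\hookrightarrow\Ind(\D)$, and left filtering follows because any map out of $X\in\D$ factors through some $Y_i$ of a relative admissible presentation, whose cokernel $\colim_{j\ge i}Y_j/Y_i$ lies in $\Ind(\C)\subset\Ind(\D,\C)$. That part is fine.

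For the equivalences you diverge from the paper, and the divergence is where the trouble sits. The paper never verifies fullness or faithfulness of the induced functors on quotients directly: it constructs explicit inverse functors \emph{before} localizing --- the assignment $(I\xrightarrow{X}\D)\mapsto(I\xrightarrow{X/X_0}\C)$ on the presentation $\Dir_\ast(\D,\C)[W^{-1}]$ of $\Ind(\D,\C)$ gives the inverse to $\Ind(\C)/\C\to\Ind(\D,\C)/\D$, and the lattice-quotient functor of \cite[Proposition 5.28]{TateObjectsExactCats} gives $\elTate(\D,\C)\to\Ind(\C)/\C$, descending to an inverse of $\Ind(\C)/\C\to\elTate(\D,\C)/\Pro(\D)$; the middle arrow is then handled by two-out-of-three. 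This buys exactly what your ``main obstacle'' paragraph concedes is missing: one never has to transport an arbitrary fraction in $\elTate(\D,\C)/\Pro(\D)$ back into $\Ind(\D,\C)/\D$, because well-definedness of the inverse is checked once on weak equivalences via the universal property of the localization. In your write-up the fullness step is only a gesture (``one rewrites the fraction as the image of a fraction''), and that step \emph{is} the content of the statement at the level of morphisms, so as written there is a genuine gap there. Your faithfulness argument also leans on the claim that a morphism of $\Ind(\D,\C)$ vanishing in the Schlichting quotient by $\Pro(\D)$ factors through an object of $\Pro(\D)$; for a composite weak isomorphism $s=p\circ i$ with $sf=0$ one only gets directly that $if$ factors through $\ker p$, and extracting a factorization of $f$ itself through the subcategory requires an argument you have not supplied. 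The ingredients you identify (relative lattices, Proposition \ref{Prop_LatticeLeftFiltAndRightFilt}, and $\Pro(\D)\cap\Ind(\D)\simeq\D$ via Lemma \ref{lemma:IndCD}) are the right ones and the strategy can be pushed through, but to make it a proof you must either carry out the calculus-of-fractions bookkeeping in detail or, as the paper does, package it into inverse functors defined before localization.
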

\begin{proof}
    We first show the inclusions are left s-filtering.  The second inclusion is a special case of the first. For the first, we observe that $\D$ is left special in $\Ind(\D,\C)$ because $\D$ is left special in $\Ind(\D)$ \cite[Lemma 2.18]{TateObjectsExactCats}. Further, $\D$ is left filtering in $\Ind(\D,\C)$ for the same reason it is in $\Ind(\D)$, namely given any admissible relative Ind diagram $Y\colon I\to \D$, and given any
    \begin{equation*}
        X\to \widehat{Y}
    \end{equation*}
    there exists $i\in I$ such that $X$ factors through the admissible monic $Y_i\hookrightarrow\widehat{Y}$.

    We now establish the equivalences. The inverse equivalence $$\Ind(\D,\C)/\D\to\Ind(\C)/\C$$ is defined as follows. Let $\Dir_\ast(\D,\C)\subset\Dir(\D)$ be the full subcategory of relative admissible Ind diagrams indexed by directed partially ordered sets with an initial object. A slight modification of the proof of \cite[Proposition 5.11]{TateObjectsExactCats} shows that $\Ind(\D,\C)$ is equivalent to the localization $\Dir_\ast(\D,\C)[W^{-1}]$, where $W$ is the subcategory of final maps as in \cite[Proposition 3.15]{TateObjectsExactCats}. By inspection, the assignment
    \begin{equation*}
        (I \xrightarrow{X} \D) \mapsto (I \xrightarrow{X/X_0} \C)
    \end{equation*}
    induces a functor $\Dir_\ast(\D,\C)\to\Ind(\C)$, such that the induced functor $$\Dir_\ast(\D,\C)\to\Ind(\C)/\C$$ factors through the localization $\Dir_\ast(\D,\C)\to\Ind(\D,\C)$. By inspection, this functor is the desired inverse for the first functor of \eqref{eqn:rel_ind_quot}. To see that it is exact, apply the straightening construction for exact sequences (cf. \cite[Proposition 3.12]{TateObjectsExactCats}).

    Mutatis mutandi, the proof of Proposition 5.28 of \cite{TateObjectsExactCats} defines an exact functor
    \begin{equation*}
        \elTate(\D,\C)\to\Ind(\C)/\C
    \end{equation*}
    which factors through the localization $\elTate(\D,\C)/\Pro(\D)$. By inspection, this functor is inverse to the canonical map $\Ind(\C)/\C\to\elTate(\D,\C)/\Pro(\D)$.

    The previous lemma now combines with the 2 of 3 property for equivalences to imply that the second functor in \eqref{eqn:rel_ind_quot} is an equivalence as well.
\end{proof}

\begin{proposition}\label{prop:relTatequot}
    Let $\D$ be idempotent complete and let $\C\subset\D$ be a right s-filtering subcategory. Then $\C\subset\Pro(\D)$ is right s-filtering, and the inclusion $\Pro(\D)\subset\elTate(\D,\C)$ induces an exact functor between exact categories
    \begin{equation*}
        \Pro(\D)\to\elTate(\D,\C)/\Ind(\C),
    \end{equation*}
    yielding an equivalence of categories $\Pro(\D)/\C \cong \elTate(\D,\C)/\Ind(\C)$.
\end{proposition}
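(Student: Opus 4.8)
The plan is to proceed in three steps: (i) show that $\C\subset\Pro(\D)$ is right $s$-filtering, so that $\Pro(\D)/\C$ is a genuine exact category; (ii) show that $\Ind(\C)\subset\elTate(\D,\C)$ is right $s$-filtering, and extract the induced exact functor $\Phi\colon\Pro(\D)/\C\to\elTate(\D,\C)/\Ind(\C)$; (iii) construct an inverse $\bar\Psi$ to $\Phi$ by sending a relative Tate object to the class of one of its relative lattices. For step (i): the embedding $\D\subset\Pro(\D)$ is right $s$-filtering, being the categorical dual of the left $s$-filtering embedding $\D^{\op}\subset\Ind(\D^{\op})$ of \cite[Prop.\ 3.10]{TateObjectsExactCats}. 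Compositions of right $s$-filtering embeddings are again right $s$-filtering — a routine check from Definition \ref{defi:filt}, concatenating the factorisations defining left filtering and the comparison diagrams defining left special, in the opposite categories — so composing with the hypothesis $\C\subset\D$ shows $\C\subset\Pro(\D)$ is right $s$-filtering, in particular an exact fully faithful embedding. Then Lemma \ref{lemma:IndCD} (applied to $\C\hookrightarrow\Pro(\D)$) gives $\C\simeq\Ind(\C)\cap\Pro(\D)$ inside $\Ind(\Pro(\D))$, and Lemma \ref{lemma:IndCinIndD} (applied to $\C\hookrightarrow\Pro(\D)$) gives: for every short exact sequence $A\hookrightarrow B\twoheadrightarrow C$ in $\Ind(\Pro(\D))$ one has $B\in\Ind(\C)$ if and only if $A,C\in\Ind(\C)$. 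Both observations are used below.

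For step (ii): this is the relative analogue of the statement that $\Ind(\C)$ is right $s$-filtering in $\elTate(\C)$, and I would prove it by adapting the argument of \cite{TateObjectsExactCats}. Right filtering is direct: given $g\colon X\to Y$ in $\elTate(\D,\C)$ with $Y\in\Ind(\C)$, choose a relative lattice $L\in\Gr_{\C}(X)$ (it exists, and $\Gr_{\C}(X)$ is directed by Proposition \ref{prop:rel_Gr-directed}); by the definition of morphisms in categories of Ind objects the restriction $L\to Y$ factors through a term $Y_i\in\C$ of an admissible Ind presentation of $Y$; as $\C\subset\Pro(\D)$ is right filtering, factor further $L\twoheadrightarrow L'\to Y_i$ with $L'\in\C$; then $L'':=\ker(L\twoheadrightarrow L')$ lies in $\Gr_{\C}(X)$ — since $X/L''$ is an extension of $X/L\in\Ind(\C)$ by $L/L''=L'\in\C$, apply the consequence of step (i) drawn from Lemma \ref{lemma:IndCinIndD} — and $g$ factors through the admissible epic $X\twoheadrightarrow X/L''\to Y$ with $X/L''\in\Ind(\C)$. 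Right special is the delicate point: mimicking the recollement-type argument of \cite{TateObjectsExactCats}, and using that $\Pro(\D)\subset\elTate(\D,\C)$ is left $s$-filtering (Proposition \ref{prop:z2}), Lemma \ref{lemma:relGrinC}, and the directedness of the relative Sato Grassmannian, one enlarges an admissible subobject $Z\hookrightarrow G$ with $Z\in\Ind(\C)$ to an admissible subobject $Y\hookrightarrow G$ with $Y\in\Ind(\C)$ and $Y/Z\in\Ind(\C)$. Granting this, $\elTate(\D,\C)/\Ind(\C)$ is an exact category; the composite $\Pro(\D)\hookrightarrow\elTate(\D,\C)\to\elTate(\D,\C)/\Ind(\C)$ is exact and sends every object of $\C$ to a zero object, since $\C\simeq\Ind(\C)\cap\Pro(\D)$ and $\Ind(\C)$ is annihilated; hence it factors through an exact functor $\Phi\colon\Pro(\D)/\C\to\elTate(\D,\C)/\Ind(\C)$ by the universal property of the quotient \cite[Prop.\ 1.16]{MR2079996}.

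For step (iii): define $\Psi\colon\elTate(\D,\C)\to\Pro(\D)/\C$ on objects by $V\mapsto[L]$ for a chosen relative lattice $L\in\Gr_{\C}(V)$; this is independent of the choice because any two relative lattices are dominated by a common one (directedness, Proposition \ref{prop:rel_Gr-directed}) whose transition maps have cokernels in $\C$ (Lemma \ref{lemma:relGrinC}) and hence become isomorphisms in $\Pro(\D)/\C$. For a morphism $f\colon V\to W$, the composite $L\hookrightarrow V\xrightarrow{f}W$ from the Pro-object $L\in\Gr_{\C}(V)$ factors through an admissible monic $M_0\hookrightarrow W$ with $M_0\in\Pro(\D)$ (left filtering, Proposition \ref{prop:z2}); enlarging $M_0$ by a relative lattice of $W$ via \cite[Lemma 6.9]{TateObjectsExactCats} together with the two consequences of step (i) shows the enlargement $M$ is again a relative lattice, and one sets $\Psi(f):=[L\to M]$, well-defined by directedness. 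Exactness of $\Psi$ follows by choosing relative lattices compatibly along a short exact sequence, adapting the proof of \cite[Prop.\ 5.28]{TateObjectsExactCats}. Since $0$ is a relative lattice of every object of $\Ind(\C)$, $\Psi$ annihilates $\Ind(\C)$ and so factors through $\bar\Psi\colon\elTate(\D,\C)/\Ind(\C)\to\Pro(\D)/\C$. Finally $\bar\Psi\Phi\simeq\id$ because each $L\in\Pro(\D)$ is its own relative lattice, and $\Phi\bar\Psi\simeq\id$ because $L\hookrightarrow V$ has cokernel $V/L\in\Ind(\C)$ and therefore becomes an isomorphism in $\elTate(\D,\C)/\Ind(\C)$; this yields the claimed equivalence.

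I expect the main obstacle to be the right-special half of step (ii). In the absolute case one sidesteps this by duality — $\Ind(\C)\subset\elTate(\C)$ is right $s$-filtering because $\Pro(\C^{\op})\subset\elTate(\C^{\op})\simeq\elTate(\C)^{\op}$ is left $s$-filtering — but in the relative setting $\elTate(\D,\C)^{\op}$ is not a relative Tate category (it is a "relative co-Tate" category), so this shortcut is unavailable and one must argue directly inside the non-abelian exact category $\elTate(\D,\C)$. Keeping track of which sums and images of admissible subobjects remain admissible is the genuinely delicate part, and the same bookkeeping is required for the exactness of $\Psi$ in step (iii).
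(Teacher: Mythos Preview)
Your three-step plan is sound and would yield the result, but it takes a detour the paper avoids. Step (i) and step (iii) match the paper closely: the paper also derives $\C\subset\Pro(\D)$ right $s$-filtering by composing with $\D\subset\Pro(\D)$, and it also builds the inverse functor by $V\mapsto[L]$ for $L\in\Gr_{\C}(V)$, citing the argument of \cite[Prop.~5.29]{TateObjectsExactCats} together with Lemma~\ref{lemma:relGrinC} for well-definedness and exactness.

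The difference is your step (ii). You set out to prove that $\Ind(\C)\subset\elTate(\D,\C)$ is right $s$-filtering, and you correctly flag right-special as the delicate point; your sketch of it is indeed loose. The paper simply does not do this. Instead it reverses the order: it constructs $\Psi\colon\elTate(\D,\C)\to\Pro(\D)/\C$ first, and then, to factor $\Psi$ through the quotient, it invokes the universal property of the localization directly. Concretely, given a short exact sequence $V_0\hookrightarrow V_1\twoheadrightarrow Z$ with $Z\in\Ind(\C)$, it checks that $\Psi(V_0\hookrightarrow V_1)$ is an isomorphism in $\Pro(\D)/\C$ by showing that any relative lattice $L_0\hookrightarrow V_0$ remains a relative lattice of $V_1$: Noether's Lemma gives $V_0/L_0\hookrightarrow V_1/L_0\twoheadrightarrow Z$, and the closure statement of Lemma~\ref{lemma:IndCinIndD} forces $V_1/L_0\in\Ind(\C)$. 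Then Lemma~\ref{lemma:relGrinC} gives $L_1/L_0\in\C$ for the chosen lattice $L_1$ of $V_1$, so $[L_0]\to[L_1]$ is invertible in $\Pro(\D)/\C$. This replaces the whole right-special bookkeeping by a two-line computation with lattices.

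What your approach buys is a statement of independent interest (the right $s$-filtering of $\Ind(\C)$ in $\elTate(\D,\C)$), which in particular justifies outright that the quotient $\elTate(\D,\C)/\Ind(\C)$ carries its expected exact structure; the paper leaves this point implicit. What the paper's approach buys is economy: it never needs the right-special diagram-chase you anticipate as the main obstacle, and the same lattice argument that defines $\Psi$ also shows it descends.
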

\begin{proof}
    The definition of right s-filtering implies that a composition of right s-filtering embeddings is again right s-filtering. Therefore, our assumption on $\C$ together with the fact that $\D\hookrightarrow\Pro(\D)$ is right s-filtering (cf. \cite[Theorem 4.2(2)]{TateObjectsExactCats}) implies that $\C\subset\Pro(\D)$ is right s-filtering.

Using Lemma \ref{lemma:relGrinC}, the same argument as for \cite[Proposition 5.29]{TateObjectsExactCats} shows that the assignment
    \begin{equation*}
        V\mapsto L
    \end{equation*}
    (sending a relative Tate object to a relative lattice) extends to an exact functor
    \begin{equation}\label{relTatequot}
        \elTate(\D,\C)\to\Pro(\D)/\C.
    \end{equation}
    To see that this factors through $\elTate(\D,\C)/\Ind(\C)$, let
    \begin{equation*}
        V_0\hookrightarrow V_1\twoheadrightarrow Z
    \end{equation*}
    be a short exact sequence of relative Tate objects with $Z\in\Ind(\C)$. By the universal property of localizations, it suffices to show that \eqref{relTatequot} sends the map $V_0\hookrightarrow V_1$ to an isomorphism in $\Pro(\D)/\C$.

    To check this, we let $L_0\hookrightarrow V_0$ be a relative lattice. By the definition of morphisms in $\elTate(\D,\C)$, the inclusion
    \begin{equation*}
        L_0\hookrightarrow V_1
    \end{equation*}
    factors through a relative lattice $L_1\hookrightarrow V_1$. Therefore, the functor \eqref{relTatequot} sends the map $V_0\hookrightarrow V_1$ to $L_0\to L_1$. We claim that this map is an isomorphism in $\Pro(\D)/\C$, i.e. that it is an admissible monic in $\Pro(\D)$ with cokernel in $\C$.

    By Lemma \ref{lemma:relGrinC}, it suffices to show that the admissible monic $L_0\hookrightarrow V_1$ is a relative lattice. This follows from Noether's lemma and Lemma \ref{lemma:IndCinIndD}. Indeed, we have a short exact sequence in $\elTate(\D,\C)$
    \begin{equation*}
        V_0/L_0\hookrightarrow V_1/L_0\twoheadrightarrow V_1/V_0.
    \end{equation*}
    By assumption $V_0/L_0$ and $V_1/V_0$ are both in $\Ind(\C)$. Therefore $V_1/L_0$ is as well.

    We have shown that \eqref{relTatequot} induces an exact functor
    \begin{equation*}
        \elTate(\D,\C)/\Ind(\C)\to\Pro(\D)/\C.
    \end{equation*}
    From the definitions, we see that this is an inverse to the map $$\Pro(\D)/\C\to\elTate(\D,\C)/\Ind(\C).$$
    This concludes the proof.
\end{proof}

\subsection{Examples}
Let $X$ be a Noetherian scheme, and let $Z\subset X$ be a closed subscheme. Denote by $j\colon X\setminus Z\hookrightarrow X$ the inclusion of the complement of $Z$. Denote by $\Coh_Z(X)$ the full subcategory of $\Coh(X)$ consisting of coherent sheaves with set-theoretic support in $Z$. Denote by $\QCoh(X,\Coh(X\setminus Z))$ the full subcategory of $\QCoh(X)$ consisting of quasi-coherent sheaves whose restriction to $X\setminus Z$ is coherent.

\begin{proposition}\label{prop:example}
    There exists an exact equivalence
    \begin{equation*}
        \QCoh(X,\Coh(X\setminus Z))\to^\simeq \Ind(\Coh(X),\Coh_Z(X)),
    \end{equation*}
    and this equivalence fits into a 2-commuting square
    \begin{equation*}
        \xymatrix{
            \QCoh(X,\Coh(X\setminus Z)) \ar[rr] \ar[d]_\simeq && \QCoh(X) \ar[d]^\simeq\\
            \Ind(\Coh(X),\Coh_Z(X)) \ar[rr] && \Ind(\Coh(X)).
        }
    \end{equation*}
\end{proposition}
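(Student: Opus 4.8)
The plan is to obtain the desired equivalence as a restriction of the classical equivalence $\Phi\colon\QCoh(X)\xrightarrow{\simeq}\Ind(\Coh(X))$ recalled in the introduction, which sends a quasi-coherent sheaf $\F$ to the admissible Ind diagram formed by its directed poset of coherent subsheaves (with colimit $\F$) and, inversely, sends an admissible Ind diagram in $\Coh(X)$ to its colimit computed in $\QCoh(X)$. Since $X$ is Noetherian, every quasi-coherent sheaf genuinely is the filtered union of its coherent subsheaves, and both categories are abelian, so $\Phi$ is automatically an exact equivalence. First I would show that $\Phi$ carries the full subcategory $\QCoh(X,\Coh(X\setminus Z))$ into $\Ind(\Coh(X),\Coh_Z(X))$ and is still essentially surjective onto it; granting this, fullness and faithfulness are inherited, the $2$-commuting square is immediate because its horizontal arrows are the defining full inclusions, and it only remains to see that the restricted functor is \emph{exact}. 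For that, note that both subcategories are closed under extensions in their ambient abelian categories --- on the Ind side by Lemma \ref{lemma:relative_exact} applied to the Serre subcategory $\Coh_Z(X)\subset\Coh(X)$, and on the $\QCoh$ side because $j^{\ast}$ is exact and coherent sheaves on the Noetherian scheme $X\setminus Z$ form a Serre subcategory of $\QCoh(X\setminus Z)$ --- so both carry the induced exact structure, and a kernel--cokernel pair is a short exact sequence on one side precisely when its $\Phi$-image is one on the other.

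The remaining content is the identification of essential images, which I would handle by a direct Noetherianity argument. If $\F\in\QCoh(X,\Coh(X\setminus Z))$, write $\F=\colim_i\F_i$ over its directed poset of coherent subsheaves. Exactness of $j^{\ast}$ gives $\F|_{X\setminus Z}=\colim_i\F_i|_{X\setminus Z}$, an increasing family of subobjects of the \emph{Noetherian} object $\F|_{X\setminus Z}$. Pick a maximal member $\F_{i_0}|_{X\setminus Z}$ of this family; using directedness (pass from $\F_i$ to $\F_i+\F_{i_0}$) together with maximality one gets $\F_i|_{X\setminus Z}\subseteq\F_{i_0}|_{X\setminus Z}$ for all $i$. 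Hence along the cofinal up-set $\{i\ge i_0\}$ every transition map restricts to an isomorphism over $X\setminus Z$, so $\F_j/\F_i$ is supported set-theoretically in $Z$, i.e.\ $\F_j/\F_i\in\Coh_Z(X)$ for all $i_0\le i\le j$; since a cofinal sub-diagram represents the same object of $\Ind(\Coh(X))$, this shows $\Phi(\F)\in\Ind(\Coh(X),\Coh_Z(X))$. Conversely, given a relative admissible Ind diagram $(X_i)_i$, each transition $X_i\hookrightarrow X_j$ is a monomorphism in $\Coh(X)$ with cokernel $X_j/X_i\in\Coh_Z(X)$, hence restricts to an isomorphism over $X\setminus Z$; as $j^{\ast}$ preserves colimits, the colimit $\F$ satisfies $\F|_{X\setminus Z}\cong\colim_i X_i|_{X\setminus Z}\cong X_{i_0}|_{X\setminus Z}$, which is coherent, giving $\F\in\QCoh(X,\Coh(X\setminus Z))$.

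The step I expect to be the crux is the passage, for $\F$ with coherent restriction, to a cofinal sub-poset of coherent subsheaves along which the successive quotients land in $\Coh_Z(X)$: this is exactly where the hypothesis on $\F$ enters, through the ascending chain condition on subobjects of the Noetherian sheaf $\F|_{X\setminus Z}$, and it is what separates $\QCoh(X,\Coh(X\setminus Z))$ from all of $\QCoh(X)$. By contrast, exactness and essential surjectivity of $\Phi$, exactness of $j^{\ast}$, the fact that $\Coh_Z(X)$ and $\Coh(X\setminus Z)$ are Serre subcategories, and the transport of exact structures along an equivalence of extension-closed subcategories are all either standard or already available from the results above.
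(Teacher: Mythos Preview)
Your proposal is correct and follows essentially the same approach as the paper: both restrict the classical equivalence $\QCoh(X)\simeq\Ind(\Coh(X))$ and identify the subcategories by finding, via Noetherianity of the coherent restriction $j^\ast\F$, a cofinal subdiagram along which the restrictions to $X\setminus Z$ are constant (so the successive quotients lie in $\Coh_Z(X)$). You are more explicit than the paper about why exactness and the $2$-commuting square follow once the essential images match, but the substantive argument is the same.
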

\begin{proof}
    Recall that, because $X$ is Noetherian, there is an exact equivalence $\QCoh(X)\simeq \Ind(\Coh(X))$ which sends a quasi-coherent sheaf $F$ to the Ind object represented by the admissible Ind diagram of coherent subsheaves of $F$ (see \cite[Tag 01PG]{stacks-project}).

    Because $\Ind(\Coh(X),\Coh_Z(X))$ is a fully exact subcategory of $\Ind(\Coh(X))$, it suffices to show that a quasi-coherent sheaf is in $\Ind(\Coh(X),\Coh_Z(X))$ if and only if its pullback to $X\setminus Z$ is coherent.  The ``only if'' is clear.

    Let $F$ be represented by an Ind diagram $F\colon I\to \Coh(X)$. Suppose the pullback $j^\ast F$ is coherent. Then there exists a final subdiagram $J\subset I$ such that the diagram
    \begin{equation*}
        J\hookrightarrow I\to^F\Coh(X)\to^{j^\ast}\Coh(X\setminus Z)
    \end{equation*}
    is isomorphic to a constant diagram. In particular, for all $j<k$ in $J$, the cokernel $F_k/F_j$ has set-theoretic support in $Z$. We conclude that $\QCoh(X,\Coh(X\setminus Z))\subset\Ind(\Coh(X),\Coh_Z(X))$.
\end{proof}

\begin{corollary}
    If the inclusion $j$ is affine, there exists a 2-commuting diagram of exact functors
    \begin{equation*}
        \xymatrix{
            \Coh(X\setminus Z) \ar[rr]^{j_{\ast}} \ar[d] && \QCoh(X) \ar[d]^\simeq\\
            \Ind(\Coh(X),\Coh_Z(X)) \ar[rr] && \Ind(\Coh(X))
        }.
    \end{equation*}
\end{corollary}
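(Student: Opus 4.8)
The plan is to realise the left-hand vertical functor as the composite
\[
\Coh(X\setminus Z)\xrightarrow{\;j_\ast\;}\QCoh(X,\Coh(X\setminus Z))\xrightarrow{\;\simeq\;}\Ind(\Coh(X),\Coh_Z(X)),
\]
the second arrow being the equivalence of Proposition \ref{prop:example}, and then to deduce the square by pasting onto the $2$-commuting square of that proposition. First I would verify that $j_\ast$ does land in the subcategory $\QCoh(X,\Coh(X\setminus Z))\subset\QCoh(X)$. Since $j$ is affine, in particular quasi-compact and quasi-separated, $j_\ast$ preserves quasi-coherence; and since $j$ is an open immersion the counit $j^\ast j_\ast\Rightarrow\id$ is an isomorphism, so for $\F\in\Coh(X\setminus Z)$ the restriction $j^\ast j_\ast\F\cong\F$ is coherent. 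Hence $j_\ast\F\in\QCoh(X,\Coh(X\setminus Z))$, and the displayed composite is defined.

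Next I would check exactness of the four functors in the square. The right-hand vertical arrow is the exact equivalence $\QCoh(X)\simeq\Ind(\Coh(X))$ recalled in the proof of Proposition \ref{prop:example}; the bottom horizontal arrow is the inclusion of the extension-closed (fully exact) subcategory $\Ind(\Coh(X),\Coh_Z(X))\hookrightarrow\Ind(\Coh(X))$; and the equivalence $\QCoh(X,\Coh(X\setminus Z))\simeq\Ind(\Coh(X),\Coh_Z(X))$ is exact by Proposition \ref{prop:example}. The only point that is not purely formal --- and the place where the affineness hypothesis is essential --- is the exactness of $j_\ast$: for $j$ affine the higher direct images $R^{>0}j_\ast$ vanish on quasi-coherent sheaves, so $j_\ast\colon\QCoh(X\setminus Z)\to\QCoh(X)$ is exact, and therefore so is its restriction $\Coh(X\setminus Z)\to\QCoh(X,\Coh(X\setminus Z))$ (a short exact sequence of coherent sheaves on $X\setminus Z$ maps to a short exact sequence in $\QCoh(X)$ all of whose terms lie in the extension-closed subcategory, hence to a short exact sequence in that subcategory). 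Composing, the left-hand vertical functor is exact.

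It remains to produce the invertible $2$-cell. I would factor the top arrow $j_\ast\colon\Coh(X\setminus Z)\to\QCoh(X)$ through $\QCoh(X,\Coh(X\setminus Z))$ as above, so that the square of the corollary is obtained from the $2$-commuting square of Proposition \ref{prop:example} by precomposing its left edge with $j_\ast$; the required $2$-cell is then inherited verbatim from that proposition. As motivation it is worth recording that for $j$ a mere open immersion $j_\ast\F$ is in general \emph{not} coherent --- already $j_\ast\Oo_{X\setminus Z}$ typically fails to be --- which is precisely why the target of this functor must be the relative Ind category $\Ind(\Coh(X),\Coh_Z(X))$ rather than $\Coh(X)$ itself.
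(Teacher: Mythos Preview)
Your proof is correct and follows essentially the same approach as the paper: use affineness of $j$ to obtain exactness of $j_\ast$, and use the counit isomorphism $j^\ast j_\ast\cong\id$ to see that $j_\ast$ factors through $\QCoh(X,\Coh(X\setminus Z))$, then invoke the equivalence of Proposition~\ref{prop:example}. Your version is simply more explicit than the paper's two-sentence argument, spelling out the exactness of each arrow and the origin of the $2$-cell.
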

\begin{proof}
    Because $X\setminus Z \subset X$ is affine, the push-forward $j_\ast$ gives an exact functor
    \begin{equation*}
        j_\ast\colon\Coh(X\setminus Z)\to \QCoh(X).
    \end{equation*}
    Because the co-unit of the adjunction $j_\ast\dashv j^\ast$ is an isomorphism, we see that $j_\ast$ factors through $\QCoh(X,\Coh(X\setminus Z))$.
\end{proof}

\begin{proposition}\label{prop:C_Z}
    There exists an exact functor
    \begin{equation*}
        \mathbf{C}_Z\colon\Coh(X)\to\Pro(\Coh_Z(X)).
    \end{equation*}
\end{proposition}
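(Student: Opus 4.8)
The plan is to define $\mathbf{C}_Z$ as the $\mathcal{I}_Z$-adic completion, recorded as a Pro object. Write $\mathcal{I}\subset\mathcal{O}_X$ for the ideal sheaf of the closed subscheme $Z$. Since $X$ is Noetherian, $\mathcal{I}$ is coherent and, for every $\mathcal{F}\in\Coh(X)$ and every $n\ge 0$, the quotient $\mathcal{F}/\mathcal{I}^{n}\mathcal{F}$ is coherent and annihilated by $\mathcal{I}^{n}$, hence set-theoretically supported on $Z$ and thus an object of $\Coh_Z(X)$. The surjections $\mathcal{F}/\mathcal{I}^{n+1}\mathcal{F}\twoheadrightarrow\mathcal{F}/\mathcal{I}^{n}\mathcal{F}$ are admissible epimorphisms in the abelian category $\Coh_Z(X)$ (a Serre subcategory of $\Coh(X)$), so $(\mathcal{F}/\mathcal{I}^{n}\mathcal{F})_{n\in\mathbb{N}}$ is an admissible Pro diagram, and I would set
\[
\mathbf{C}_Z(\mathcal{F})\ :=\ \ilim_{n}\ \mathcal{F}/\mathcal{I}^{n}\mathcal{F}\ \in\ \Pro(\Coh_Z(X)).
\]
A morphism $\mathcal{F}\to\mathcal{G}$ in $\Coh(X)$ induces compatible maps $\mathcal{F}/\mathcal{I}^{n}\mathcal{F}\to\mathcal{G}/\mathcal{I}^{n}\mathcal{G}$, hence a morphism of Pro objects; this assignment is manifestly additive and functorial, so the only substantive point is exactness.

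For exactness I would take a short exact sequence $0\to\mathcal{F}'\to\mathcal{F}\to\mathcal{F}''\to 0$ in $\Coh(X)$ and note that, for each $n$, surjectivity of $\mathcal{I}^{n}\mathcal{F}\twoheadrightarrow\mathcal{I}^{n}\mathcal{F}''$ yields a short exact sequence
\[
0\ \to\ \mathcal{F}'/(\mathcal{F}'\cap\mathcal{I}^{n}\mathcal{F})\ \to\ \mathcal{F}/\mathcal{I}^{n}\mathcal{F}\ \to\ \mathcal{F}''/\mathcal{I}^{n}\mathcal{F}''\ \to\ 0
\]
in $\Coh_Z(X)$, the left-hand term being $(\mathcal{F}'+\mathcal{I}^{n}\mathcal{F})/\mathcal{I}^{n}\mathcal{F}$ up to the canonical isomorphism of Noether's lemma. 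All transition maps of these three $\mathbb{N}$-indexed systems are epimorphisms, so this is a short exact sequence of admissible Pro diagrams; by the dual of the straightening construction for exact sequences \cite[Prop. 3.12]{TateObjectsExactCats}, its limit is a short exact sequence in $\Pro(\Coh_Z(X))$. It then remains only to identify $\ilim_n \mathcal{F}'/(\mathcal{F}'\cap\mathcal{I}^{n}\mathcal{F})$ with $\mathbf{C}_Z(\mathcal{F}')=\ilim_n \mathcal{F}'/\mathcal{I}^{n}\mathcal{F}'$.

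This identification is the heart of the matter, and it is where Noetherianity is genuinely used: the Artin--Rees lemma provides an integer $c\ge 0$ with $\mathcal{F}'\cap\mathcal{I}^{n+c}\mathcal{F}=\mathcal{I}^{n}(\mathcal{F}'\cap\mathcal{I}^{c}\mathcal{F})\subseteq\mathcal{I}^{n}\mathcal{F}'$ for all $n$, whereas $\mathcal{I}^{n}\mathcal{F}'\subseteq\mathcal{F}'\cap\mathcal{I}^{n}\mathcal{F}$ trivially. Hence the two decreasing filtrations $(\mathcal{I}^{n}\mathcal{F}')_{n}$ and $(\mathcal{F}'\cap\mathcal{I}^{n}\mathcal{F})_{n}$ of $\mathcal{F}'$ are mutually cofinal, so the canonical surjections $\mathcal{F}'/\mathcal{I}^{n}\mathcal{F}'\twoheadrightarrow\mathcal{F}'/(\mathcal{F}'\cap\mathcal{I}^{n}\mathcal{F})$ assemble into an isomorphism of Pro objects, and exactness of $\mathbf{C}_Z$ --- whence the proposition --- follows. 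I expect the Artin--Rees step to be the only real obstacle; the passage from a levelwise short exact sequence of Pro diagrams to a short exact sequence in $\Pro(\Coh_Z(X))$, together with all the functoriality bookkeeping, is routine.
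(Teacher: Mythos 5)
Your proof is correct and takes essentially the same approach as the paper: the paper defines $\mathbf{C}_Z(\F):=\lim_r j_{r,*}j_r^*\F$, which is exactly your $\ilim_n \F/\mathcal{I}^n\F$, observes that the transition maps are epimorphisms in the abelian category $\Coh_Z(X)$, and invokes the Artin--Rees lemma for exactness. You have simply spelled out the cofinality-of-filtrations argument that the paper delegates to a citation.
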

\begin{proof}
    For all $r\ge 1$, let $j_r\colon Z^r\to X$ denote the inclusion of the $r^{th}$-order formal neighborhood of $Z$ in $X$. For $F\in\Coh(X)$, define
    \begin{equation*}
        \mathbf{C}_Z(F):=\lim_r j_{r,\ast}j_r^\ast F
    \end{equation*}
    By inspection, the transition maps $j_{r,\ast}j_r^\ast F\to j_{r-1,\ast}j_{r-1}^\ast F$ are epimorphisms in the abelian category $\Coh_Z(X)$. Therefore, the assignment $F\mapsto\mathbf{C}_Z(F)$ defines a functor
    \begin{equation*}
        \mathbf{C}_Z\colon\Coh(X)\to\Pro(\Coh_Z(X)).
    \end{equation*}
    By the Artin--Rees Lemma (e.g. \cite[Proposition 10.12]{MR0242802}), this functor is exact.
\end{proof}

\begin{corollary}\label{cor:T_Z}
    There exists a $2$-commuting diagram of functors
    \begin{equation*}
        \xymatrix{
            \Coh_Z(X) \ar[rr] \ar[d]^1 && \Coh(X) \ar[rr] \ar[d]^{\mathbf{C}_Z} && \QCoh(X,\Coh(X\setminus Z)) \ar[d]^{\mathbf{T}_Z}\\
            \Coh_Z(X) \ar[rr] && \Pro(\Coh_Z(X)) \ar[rr] && \elTate(\Coh_Z(X)),
        }
    \end{equation*}
    moreover the functor $\mathbf{T}_Z$ satisfies the property that the composition
    $$\Coh(U) \xrightarrow{\mathbf{T}_Z} \elTate(\Coh_Z(X)) \rightarrow \elTate(\Coh_Z(X))/\Ind(\Coh_Z(X)) \cong \Pro(\Coh_Z(X))/\Coh_Z(X)$$
    is an exact functor.
\end{corollary}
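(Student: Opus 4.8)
\emph{Overview of the plan.} I will (i) construct $\mathbf{T}_Z$ as the functor induced by $\mathbf{C}_Z$ on relative Ind objects, (ii) check the two squares commute by unwinding the equivalence of Proposition~\ref{prop:example}, and (iii) prove the ``moreover'' clause by a lifting argument that sidesteps the genuine failure of exactness of $\mathbf{T}_Z$ itself.

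\emph{Construction and commutativity of the diagram.} The basic input is the elementary observation that, for $G\in\Coh_Z(X)$, the pro-system $\mathbf{C}_Z(G)=\lim_r j_{r,\ast}j_r^\ast G$ is pro-constant: since $X$ is Noetherian and $G$ is a coherent sheaf with set-theoretic support in $Z$, its annihilator sheaf contains a power of the ideal sheaf of $Z$, so $j_{r,\ast}j_r^\ast G\cong G$ for all sufficiently large $r$. Hence $\mathbf{C}_Z|_{\Coh_Z(X)}$ is naturally isomorphic to the inclusion $\Coh_Z(X)\hookrightarrow\Pro(\Coh_Z(X))$, which is exactly the commutativity of the left square. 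To construct $\mathbf{T}_Z$, apply $\mathbf{C}_Z$ levelwise to relative admissible Ind diagrams: under the equivalence $\QCoh(X,\Coh(X\setminus Z))\simeq\Ind(\Coh(X),\Coh_Z(X))$ of Proposition~\ref{prop:example}, an object is a system $(F_i)$ with $F_j/F_i\in\Coh_Z(X)$, and exactness of $\mathbf{C}_Z$ (Proposition~\ref{prop:C_Z}) combined with the observation above gives $\mathbf{C}_Z(F_j)/\mathbf{C}_Z(F_i)\cong\mathbf{C}_Z(F_j/F_i)\cong F_j/F_i\in\Coh_Z(X)$. So the induced admissible Ind-Pro object lies in $\Ind(\Pro(\Coh_Z(X)),\Coh_Z(X))=\elTate(\Coh_Z(X))$; call this functor $\mathbf{T}_Z$. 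It is exact because the levelwise application of an exact functor is exact, and the right square commutes because a coherent sheaf $F$ is its own maximal coherent subsheaf, hence corresponds to the constant (trivially relatively admissible) Ind diagram, so $\mathbf{T}_Z(F)=\mathbf{C}_Z(F)$ lands in $\Pro(\Coh_Z(X))\subset\elTate(\Coh_Z(X))$.

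\emph{The ``moreover'' clause.} Here $\mathbf{T}_Z$ on $\Coh(U)$ means the composite $\Coh(U)\xrightarrow{j_\ast}\QCoh(X,\Coh(X\setminus Z))\xrightarrow{\mathbf{T}_Z}\elTate(\Coh_Z(X))$, where $j_\ast$ is well-defined because $j$ is quasi-compact and quasi-separated and $j^\ast j_\ast\cong\mathrm{id}$. By Proposition~\ref{prop:relTatequot} applied with $\C=\D=\Coh_Z(X)$, the equivalence $\elTate(\Coh_Z(X))/\Ind(\Coh_Z(X))\cong\Pro(\Coh_Z(X))/\Coh_Z(X)$ is the one induced by sending a relative Tate object to (the class of) one of its lattices. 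For $G\in\Coh(U)$, the quasi-coherent sheaf $j_\ast G$ is represented, under Proposition~\ref{prop:example}, by the directed system of its coherent subsheaves restricting onto $G$, i.e.\ by the coherent extensions $\bar G\subset j_\ast G$ of $G$ (these are non-empty and directed since $X$ is Noetherian); thus $\mathbf{T}_Z(j_\ast G)=\colim_{\bar G}\mathbf{C}_Z(\bar G)$, and for any single such $\bar G$ the subobject $\mathbf{C}_Z(\bar G)$ is a lattice, its quotient by a smaller $\mathbf{C}_Z(\bar G')$ being $\mathbf{C}_Z(\bar G/\bar G')\cong\bar G/\bar G'\in\Coh_Z(X)$. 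Consequently the composite to be analysed sends $G$ to the class of $\mathbf{C}_Z(\bar G)$ in $\Pro(\Coh_Z(X))/\Coh_Z(X)$ for any coherent extension $\bar G$; this is well-defined since two extensions $\bar G_1,\bar G_2$ share the common sub-extension $\bar G_1\cap\bar G_2$ whose comparison cokernels are supported on $Z$. Finally, a short exact sequence $0\to G'\to G\to G''\to 0$ in $\Coh(U)$ lifts to one of coherent extensions on $X$: choose any coherent extension $\bar G$ of $G$, let $\bar{G}''\subset j_\ast G''$ be its image (coherent, restricting onto $G''$) and $\bar G'=\ker(\bar G\twoheadrightarrow\bar{G}'')$ (coherent, restricting onto $G'$); applying the exact functor $\mathbf{C}_Z$ and reducing modulo $\Coh_Z(X)$ produces exactly the image of the original sequence, which is therefore short exact. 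Hence the composite is exact.

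\emph{Expected main obstacle.} The bookkeeping around Proposition~\ref{prop:example} and the identification $\mathbf{C}_Z|_{\Coh_Z(X)}\simeq\mathrm{incl}$ is routine. The one delicate point is the last exactness claim: one must resist trying to prove $\mathbf{T}_Z\circ j_\ast$ exact, since this fails precisely when $j$ is not affine, the obstruction being the higher direct images $R^{\ge 1}j_\ast$, which are set-theoretically supported on $Z$. The lifting argument above is the clean way to see that this obstruction vanishes in $\Pro(\Coh_Z(X))/\Coh_Z(X)$ without ever computing higher direct images, using only that $\mathbf{C}_Z$ is genuinely exact on all of $\Coh(X)$.
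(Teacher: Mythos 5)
Your proof is correct and follows essentially the same route as the paper: the left square via pro-constancy of $\mathbf{C}_Z$ on $\Coh_Z(X)$, the construction of $\mathbf{T}_Z$ from $\mathbf{C}_Z$ viewed as an exact functor of pairs applied to relative admissible Ind diagrams, and the ``moreover'' clause by lifting a short exact sequence in $\Coh(U)$ to one of coherent extensions inside the $j_*$'s and then applying the exact functor $\mathbf{C}_Z$ followed by the quotient by $\Coh_Z(X)$ (this is the content of the paper's Lemma~\ref{lemma:UtoPro}). The only cosmetic difference is that you produce the lifted sequence starting from an extension of the middle term and taking its image and kernel, whereas the paper starts from the subsheaf $\ker(j_*\Fc_3\to R^1j_*\Fc_1)$; both yield the same conclusion.
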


\begin{proof}
    The definition of $\mathbf{C}_Z$ ensures that if $F\in\Coh_Z(X)$, then the Pro object $\mathbf{C}_Z(F)$ is represented by the constant Pro diagram on $F$. This accounts for the left square. For the right square, we observe that $\mathbf{C}_Z$ gives an exact functor of pairs
    \begin{equation*}
        (\Coh(X),\Coh_Z(X))\to^{\mathbf{C}_Z}(\Pro(\Coh_Z(X)),\Coh_Z(X)).
    \end{equation*}
    The exact functor $\mathbf{T}_Z$ is the corresponding map
    \begin{align*}
        \QCoh(X,\Coh(X\setminus Z))\to^{\simeq} \Ind(\Coh(X),\Coh_Z(X))\to &\Ind(\Pro(\Coh_Z(X)),\Coh_Z(X))\\
        &=:\elTate(\Coh_Z(X)).
    \end{align*}
    This concludes the construction and the proof of the first part. The second part follows from Lemma \ref{lemma:UtoPro} below, and the fact that the exact functor $$\elTate(\Coh_Z(X))/\Ind(\Coh_Z(X)) \cong \Pro(\Coh_Z(X))/\Coh_Z(X)$$
 is an equivalence of exact categories by \cite[Proposition 5.34]{TateObjectsExactCats}.
\end{proof}

\begin{lemma}\label{lemma:UtoPro}
Let $X$ be a Noetherian affine scheme, and $Z$ a closed subset, we denote by $j\colon U \hookrightarrow X$ the inclusion of the open complement. There exists an exact functor $$\mathbf{B}_Z\colon\Coh(U) \rightarrow \Pro(\Coh_Z(X))/\Coh_Z(X),$$
rendering the diagram of categories
\[
\xymatrix{
\Coh(X) \ar[r]^{j^*} \ar[d] & \Coh(U) \ar[d]^{\mathbf{B}_Z} \\
\Pro(\Coh_Z(X)) \ar[r] & \Pro(\Coh_Z(X))/\Coh_Z(X)
}
\]
$2$-commutative.
\end{lemma}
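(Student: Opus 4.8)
The plan is to obtain $\mathbf{B}_Z$ from $\mathbf{C}_Z$ by a universal property, exhibiting $j^{\ast}$ as a localization. Since $X$ is Noetherian, $\Coh_Z(X)$ is a Serre subcategory of the abelian category $\Coh(X)$, hence left (and right) $s$-filtering, and $j^{\ast}\colon\Coh(X)\to\Coh(U)$ identifies $\Coh(U)$ with the quotient $\Coh(X)/\Coh_Z(X)$ in the sense of \cite{MR2079996} --- equivalently, with the abelian Serre quotient: essential surjectivity is the extendability of coherent sheaves on an open subset of a Noetherian scheme, and the Hom-sets are computed as usual for Serre quotients (see e.g.\ \cite{stacks-project}); none of this uses the affineness of $X$. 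On the target side, $\Coh_Z(X)\hookrightarrow\Pro(\Coh_Z(X))$ is right $s$-filtering by \cite[Theorem 4.2(2)]{TateObjectsExactCats}, so the quotient exact category $\Pro(\Coh_Z(X))/\Coh_Z(X)$ appearing in the statement makes sense; write $q$ for its quotient functor.

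Now consider the composite exact functor
\[
\Phi\colon\Coh(X)\xrightarrow{\ \mathbf{C}_Z\ }\Pro(\Coh_Z(X))\xrightarrow{\ q\ }\Pro(\Coh_Z(X))/\Coh_Z(X),
\]
which is exact because $\mathbf{C}_Z$ is exact (Proposition \ref{prop:C_Z}) and quotient functors are exact. The functor $\Phi$ vanishes on $\Coh_Z(X)$: for $F\in\Coh_Z(X)$ the Pro object $\mathbf{C}_Z(F)$ is the constant Pro diagram on $F$ --- as recorded in the proof of Corollary \ref{cor:T_Z} --- hence lies in the essential image of $\Coh_Z(X)\hookrightarrow\Pro(\Coh_Z(X))$ and is sent to $0$ by $q$. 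By the universal property of the localization $\Coh(X)\to\Coh(X)/\Coh_Z(X)\simeq\Coh(U)$, an exact functor out of $\Coh(X)$ vanishing on $\Coh_Z(X)$ factors, uniquely up to natural isomorphism, through $j^{\ast}$; applying this to $\Phi$ yields the exact functor $\mathbf{B}_Z\colon\Coh(U)\to\Pro(\Coh_Z(X))/\Coh_Z(X)$ together with a natural isomorphism $\mathbf{B}_Z\circ j^{\ast}\cong\Phi=q\circ\mathbf{C}_Z$, which is exactly the $2$-commutativity asserted.

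The only non-formal input is the first paragraph's identification of $j^{\ast}$ with the localization of $\Coh(X)$ at $\Coh_Z(X)$ together with the $2$-universal property of that quotient for exact functors vanishing on the subcategory; granted this, the rest is bookkeeping. If one prefers not to invoke that universal property, $\mathbf{B}_Z$ can be built by hand: for $G\in\Coh(U)$ let $\mathcal{E}(G)$ be the poset of coherent subsheaves $\bar G\subset j_{\ast}G$ with $\bar G|_U=G$; it is nonempty and directed (take images of coherent extensions of $G$, resp.\ sums inside $j_{\ast}G$, using that $X$ is Noetherian). For $\bar G\le\bar G'$ in $\mathcal{E}(G)$ the cokernel $\bar G'/\bar G$ lies in $\Coh_Z(X)$, so exactness of $\mathbf{C}_Z$ and the vanishing just used show $q\,\mathbf{C}_Z(\bar G)\to q\,\mathbf{C}_Z(\bar G')$ is an isomorphism; hence $\mathbf{B}_Z(G):=\colim_{\bar G\in\mathcal{E}(G)}q\,\mathbf{C}_Z(\bar G)$ exists --- a colimit over a directed poset all of whose transition maps are isomorphisms, computed by any vertex --- and agrees with $q\,\mathbf{C}_Z(\bar G)$ for any chosen extension. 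The work then shifts entirely to functoriality (a morphism $G\to G'$ extends to a map between suitable members of $\mathcal{E}(G)$ and $\mathcal{E}(G')$ after enlarging the target extension by an image, and one must check independence of all choices), a routine but tedious calculus-of-fractions manipulation that the localization route absorbs. Either way, the substantive point --- and the main obstacle --- is controlling the interaction of $\mathbf{C}_Z$ with passage to the quotient by $\Coh_Z(X)$.
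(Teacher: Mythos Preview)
Your proposal is correct, and your primary route (factoring $q\circ\mathbf{C}_Z$ through the Serre quotient $\Coh(X)/\Coh_Z(X)\simeq\Coh(U)$ via its universal property) is genuinely different from the paper's argument. The paper proceeds exactly along the lines of your secondary ``by hand'' sketch: it defines $\mathbf{B}_Z(\mathcal{F})$ as the directed colimit $\colim_{N\in K_{\mathcal F}} q\,\mathbf{C}_Z(N)$ over coherent extensions $N\subset j_*\mathcal F$ with $N|_U=\mathcal F$, observes that the transition maps become isomorphisms after $q$ since quotients of extensions lie in $\Coh_Z(X)$, and then verifies exactness explicitly by lifting a short exact sequence in $\Coh(U)$ to one of coherent extensions on $X$ (using the image of $j_*\mathcal F_2\to j_*\mathcal F_3$ and an $R^1j_*$ argument to arrange surjectivity) before applying $\mathbf{C}_Z$. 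Your localization approach absorbs precisely this lifting step into the universal property --- the fact that every short exact sequence in the Serre quotient is the image of one in $\Coh(X)$ is what makes the induced functor exact --- so it is shorter and conceptually cleaner, at the cost of invoking a universal property that the paper otherwise avoids stating. Neither approach actually needs the affineness hypothesis, as you note.
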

\begin{proof}
For every coherent sheaf $\Fc \in \Coh(U)$ we denote by $K_{\Fc}$ the directed set of coherent subobjects $N$ of $j_*\Fc$ satisfying $N|_U = \Fc$. We have $$j_*\Fc \cong \colim_{N \in K_{\Fc}}N.$$ As a consequence we obtain that for every morphism $\Fc_1 \rightarrow \Fc_2$, and every $N_1 \in K_{\Fc_1}$, there exists an $N_2 \in K_{\Fc_2}$, such that $N_1 \hookrightarrow \Fc_1 \rightarrow \Fc_2$ factors through $N_2$.

We abuse notation and denote by $$\mathbf{B}_Z\colon \Coh(X) \rightarrow \Pro(\Coh_Z(X))/\Coh_Z(X)$$ the functor given by the composition of $\mathbf{C}_Z$ and $$\Pro(\Coh_Z(X)) \rightarrow \Pro(\Coh_Z(X))/\Coh_Z(X).$$
Now we define $$\mathbf{C}_Z\colon \Coh(U) \rightarrow \Pro(\Coh_Z(X))/\Coh_Z(X)$$ as the colimit $$\mathbf{B}_Z(\Fc) = \colim_{N \in K_{\Fc}}\mathbf{C}_Z(N).$$
We observe that for $N \subset N'$, the induced bonding map $\mathbf{C}_Z(N) \rightarrow \mathbf{C}_Z(N')$ is an isomorphism in $\Pro(\Coh_Z(X))/\Coh_Z(X)$, since the kernel is given by $\mathbf{C}_Z(N'/N) \in \Coh_Z(Y)$. Therefore the colimit exists.

To conclude the proof we have to show that the functor we have defined is exact. Let
\begin{equation}\label{eqn:f123}0 \rightarrow \Fc_1 \rightarrow \Fc_2 \rightarrow \Fc_3 \rightarrow 0\end{equation}
be a short exact sequence in $\Coh(U)$. We obtain an exact sequence
$$0 \rightarrow j_* \Fc_1 \rightarrow j_* \Fc_2 \rightarrow j_* \Fc_3 \rightarrow R^1j_* \Fc_1.$$
The quasi-coherent sheaf $\mathcal{G}_3 := \ker(j_* \Fc_3 \rightarrow R^1j_* \Fc_1)$ agrees with $\Fc_3$ after restriction to $U$, since $R^1j_*\Fc|_U = 0$. Choose a coherent subsheaf $\bar{\Fc}_3 \subset \mathcal{G}_3$, satisfying $\bar{\Fc}_3|_U = \Fc_3$. By what we remarked above, there exists a coherent subsheaf $\bar{\Fc}_2 \subset j_*\Fc_2$, satisfying $\bar{\Fc}_2 |_U= \Fc_2|_U$, mapping to $\bar{\Fc}_3$. We define the kernel of this map to be $\bar{\Fc}_1$. We have a commutative diagram with commutative rows
\[
\xymatrix{
0 \ar[r] \ar[d] & \bar{\Fc}_1 \ar[r] \ar[d] & \bar{\Fc}_2 \ar[r] \ar[d] & \bar{\Fc}_3 \ar[r] \ar[d] & 0 \\
0 \ar[r] & j_*\Fc_1 \ar[r] & j_*\Fc_2 \ar[r] & \mathcal{G}_3 \ar[r] & 0.
}
\]
Applying the restriction functor $j^*$, we see that the two vertical maps on the right become isomorphisms, hence the same is true for $\bar{\Fc_1} \rightarrow j_*\Fc_1$. After applying $\mathbf{C}_Z$, the upper short exact sequence can be identified with the sequence obtained by applying the functor $\mathbf{B}_Z$ to Equation \ref{eqn:f123}. This concludes the proof of the assertion.
\end{proof}

\begin{corollary}
    Let $W\subset Z\subset X$ be a chain of closed subschemes. There exists a 2-commuting diagram of exact maps
    \begin{equation*}
        \xymatrix{
            \Coh_W(X) \ar[rr] \ar[d]^1 && \Coh(X) \ar[rr] \ar[d]^{\mathbf{C}_Z} && \QCoh(X,\Coh(X\setminus W))\ar[d]^{\mathbf{T}_{Z,W}}\\
            \Coh_W(X) \ar[rr] && \Pro(\Coh_Z(X)) \ar[rr] && \elTate(\Coh_Z(X),\Coh_W(X))/\Ind(\Coh_Z(X)).
        }
    \end{equation*}
\end{corollary}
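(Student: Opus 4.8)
The plan is to obtain this as the relative upgrade of Corollary \ref{cor:T_Z}, reusing the adic completion functor $\mathbf{C}_Z$ of Proposition \ref{prop:C_Z}. First I would apply Proposition \ref{prop:example} with the closed subscheme $W$ in place of $Z$, giving an exact equivalence $\QCoh(X,\Coh(X\setminus W))\simeq\Ind(\Coh(X),\Coh_W(X))$ under which the horizontal map $\Coh(X)\to\QCoh(X,\Coh(X\setminus W))$ becomes the inclusion of constant Ind diagrams. Since $W\subset Z$ we have $\Coh_W(X)\subset\Coh_Z(X)$, so $\Coh_W(X)$ is a Serre --- in particular extension-closed --- subcategory of $\Coh(X)$, of $\Coh_Z(X)$, and of $\Pro(\Coh_Z(X))$, and hence all the relative categories appearing in the statement are defined.

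The crux is that $\mathbf{C}_Z$ is compatible with the relative structure: it is exact by Proposition \ref{prop:C_Z}, and, since $\Coh_W(X)\subset\Coh_Z(X)$, the computation already used in the proof of Corollary \ref{cor:T_Z} shows that for $F\in\Coh_W(X)$ the Pro object $\mathbf{C}_Z(F)$ is the constant Pro diagram on $F$, hence lies in $\Coh_W(X)\subset\Pro(\Coh_Z(X))$; concretely, the set-theoretic support of $F$ lies in $Z$ and $X$ is Noetherian, so a power of the ideal sheaf of $Z$ annihilates $F$ and $j_{r,\ast}j_r^\ast F=F$ for $r\gg 0$. Thus $\mathbf{C}_Z$ is an exact functor of pairs $(\Coh(X),\Coh_W(X))\to(\Pro(\Coh_Z(X)),\Coh_W(X))$, and applying it levelwise to relative admissible Ind diagrams --- using exactness to see that admissible monics and relative quotients are preserved --- yields an exact functor $\Ind(\Coh(X),\Coh_W(X))\to\Ind(\Pro(\Coh_Z(X)),\Coh_W(X))=\elTate(\Coh_Z(X),\Coh_W(X))$.

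I would then let $\mathbf{T}_{Z,W}$ be this functor precomposed with the equivalence of Proposition \ref{prop:example} and postcomposed with the quotient functor onto $\elTate(\Coh_Z(X),\Coh_W(X))/\Ind(\Coh_Z(X))$; here one should first confirm that the displayed quotient is well formed, i.e. that the relevant copy of $\Ind(\Coh_Z(X))$ inside $\elTate(\Coh_Z(X),\Coh_W(X))$ --- which is the relative category $\Ind(\Coh_Z(X),\Coh_W(X))$ --- is $s$-filtering, in the spirit of Propositions \ref{prop:z2} and \ref{prop:relTatequot}. The $2$-commutativity of both squares is then immediate from the construction, exactly as in the proof of Corollary \ref{cor:T_Z}: the left square commutes because $\mathbf{C}_Z$ restricted to $\Coh_W(X)$ is the constant-diagram inclusion $\Coh_W(X)\hookrightarrow\Pro(\Coh_Z(X))$, and for the right square both composites carry $F\in\Coh(X)$ to the class of $\mathbf{C}_Z(F)$. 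The only non-formal input is the compatibility of $\mathbf{C}_Z$ with $\Coh_W(X)$, which is already subsumed in Corollary \ref{cor:T_Z}; so I do not expect a serious obstacle beyond confirming that the target quotient is legitimately defined.
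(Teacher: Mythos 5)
Your proposal is correct and follows essentially the same route as the paper, whose entire proof is that the argument of Corollary \ref{cor:T_Z} carries over verbatim once one observes that $\mathbf{C}_Z(F)$ is the constant Pro diagram for $F\in\Coh_W(X)$ --- precisely the observation you identify as the crux and justify via the annihilation of $F$ by a power of the ideal sheaf of $Z$. The extra care you take about the meaning of the quotient by $\Ind(\Coh_Z(X))$ is reasonable but not something the paper itself elaborates.
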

\begin{proof}
    The proof is the same as for the previous corollary, once we observe that, for $W\subset Z$ and $F\in\Coh_W(X)$, we have that $\mathbf{C}_Z(F)$ is represented by the constant Pro diagram on $F$.
\end{proof}

We conclude this section with a brief discussion of another exact functor of geometric origin, which takes values in a category of Pro objects.

\begin{oldexample}[Deligne]\label{ex:deligne}\normalfont
Let $X$ be a Noetherian scheme and $j:U\hookrightarrow X$ an open
immersion. Let $\mathcal{J}$ be the ideal sheaf of the closed complement. It
does not matter whether we take the reduced ideal sheaf or some
nil-thickening. Deligne \cite{deligne} defines the following: For any coherent
sheaf $\mathcal{F}$ on $U$ let $\overline{\mathcal{F}}$ be a \textit{coherent}
prolongment to $X$. Such an $\overline{\mathcal{F}}$ always exists, but it is
not canonical. A concrete construction is given by considering the poset of
coherent subsheaves $\mathcal{F}_{i}\subseteq j_{\ast}\mathcal{F}$ of the
quasi-coherent sheaf $j_{\ast}\mathcal{F}$. The restrictions $j^{\ast
}\mathcal{F}_{i}$ will become stationary and equal $\mathcal{F}$, and once
$\mathcal{F}_{i}$ is chosen so that this occurs, $\mathcal{F}_{i}$ is a
feasible candidate for $\overline{\mathcal{F}}$. We get a Pro-diagram%
\begin{equation}
\mathcal{J}^{i}\cdot\overline{\mathcal{F}}\longleftarrow\mathcal{J}%
^{i+1}\cdot\overline{\mathcal{F}}\longleftarrow\mathcal{J}^{i+2}%
\cdot\overline{\mathcal{F}}\longleftarrow\cdots\text{,}\label{line1}%
\end{equation}
which defines an object in $\mathsf{Pro\,}\mathrm{Coh}(X)$. Deligne shows that
this defines a functor%
\[
j_{!}:\mathrm{Coh}(U)\longrightarrow\mathsf{Pro\,}\mathrm{Coh}(X)\text{,}%
\]
which is essentially left adjoint to the pull-back $j^{\ast}$:%
\[
\operatorname*{Hom}\nolimits_{U}(\mathcal{F},j^{\ast}\mathcal{G})\overset
{\sim}{\longrightarrow}\operatorname*{Hom}\nolimits_{X}(j_{!}\mathcal{F}%
,\mathcal{G})
\]
holds functorially for all $\mathcal{F}$ coherent on $U$ and $\mathcal{G}$
quasi-coherent on $X$. In favourable situations, if $\overline{\mathcal{F}}$
has no torsion, the Pro-diagram in Equation \ref{line1} has monic transition
maps. It describes an intersection, the \textquotedblleft$\mathcal{J}%
$-divisible\textquotedblright\ sections of $\overline{\mathcal{F}}$. This
makes this Pro-system very different from an admissible Pro-diagram with epic
transition maps. Moreover, unless $X$ is an Artinian scheme, Equation
\ref{line1} cannot be replaced by an admissible system, Proposition \ref{prop:abelian}
does not apply since $\mathrm{Coh}(X)$ is not an Artinian abelian category.
Being exact, $j_{!}$ induces a map in $K$-theory, but by the Eilenberg
swindle, $j_{!}:K_{\mathrm{Coh}(U)}\rightarrow K_{\mathsf{Pro\,}%
\mathrm{Coh}(X)}$ is just a map to the zero spectrum, so there is nothing
interesting to see in $K$-theory anyway. Nonetheless, $j_{!}$ is of
course an important functor for other purposes:\ Deligne uses $j_{!}$ as an
ingredient to define a derived push-forward \textquotedblleft with compact
supports\textquotedblright\ $\mathbf{R}f_{!}$ for (non-admissible
Pro-)coherent sheaves. Classically, both $\mathbf{R}f_{!}\leftrightarrows
\mathbf{R}f^{!}$ were only defined for proper morphisms, but this trick
allowed him to devise a generalization to compactifiable morphisms. See
\cite{deligne} for more.
\end{oldexample}

Deligne's functor is related to $\mathbf{T}_Z$ by a short exact sequence
\begin{equation}\label{eqn:deligne}
j_{!}(-) \hookrightarrow j_*(-) \twoheadrightarrow \mathbf{T}_Z(-)
\end{equation}
of (not necessarily exact) functors from $\Coh(U)$ to $\mathsf{Ind}\;\mathsf{Pro} \Coh(X)$. Indeed, for $\F \in \Coh(U)$ we define $j_!\F$ by choosing an extension $\bar{\F}$, and forming the limit over the inverse system $\mathcal{J}^i\bar{\F}$. Hence, we have a short exact sequence
$$0 \rightarrow \lim_{i} \mathcal{J}^i\bar{\F} \rightarrow \bar{\F} \rightarrow \lim_i \bar{\F}/\mathcal{J}^i\bar{\F} \rightarrow 0,$$
in $\mathsf{Pro}(\Coh(X))$. The limit on the right hand side is by definition $\mathbf{C}_Z(\bar{\F})$. Taking the colimit over all possible choices for $\bar{\F}$, we obtain the short exact sequence
$$0 \rightarrow j_! \F \rightarrow j_* \F \rightarrow \mathbf{T}_Z(\F) \rightarrow 0$$
in $\mathsf{Ind}\;\mathsf{Pro} \Coh(X)$.

\section{The Relative Index Map}\label{relative}

In order to introduce the relative index map, and relate it to algebraic $K$-theory, we need to recall a few facts about Waldhausen's approach to algebraic $K$-theory for exact categories.

In \cite{MR2079996}, Schlichting established a fundamental ``Localization Theorem'' for the $K$-theory of exact categories. We will be mainly interested in its statement for Waldhausen's $S$-construction. This requires us to recall notation introduced by Waldhausen \cite{MR0802796}.

For an exact functor $f\colon \C \rightarrow \D$ of exact categories, we denote by $S_{\bullet}^r(f)$ the simplicial object of exact categories given by pairs
$$ \left(Y_1\hookrightarrow\cdots\hookrightarrow Y_n;X_1\hookrightarrow \cdots\hookrightarrow
        X_{n+1}\right) \in S_n\C \times S_{n+1}\D $$
together with an isomorphism
$$\phi\colon (Y_1 \hookrightarrow \cdots Y_n) \cong (X_2/X_1 \hookrightarrow \cdots \hookrightarrow X_{n+1}/X_1).$$
The face and degeneracy maps are induced by the ones for the Waldhausen $S$-construction; details can be found in \emph{loc. cit.}

For a category $\C$ we denote by $\C^{\grp}$ the groupoid obtained by discarding all non-invertible isomorphisms. Via the classifying space construction (that is, the geometric realisation of the nerve), we can fully faithfully embed the $2$-category of groupoids into the $\infty$-category of spaces.

Following Waldhausen, we denote by $K_{\C} = \Omega|S_{\bullet}\C^{\times}|$ the $K$-theory space of an exact category. The corresponding connective spectrum will be denoted by $\Kk_{\C}$.
\begin{proposition}(Schlichting \cite[Lemma 2.3]{MR2079996})\label{prop:Schlichting}
   Suppose that $\C\subset\D$ is a right $s$-filtering inclusion of an idempotent complete subcategory. Let
    \begin{equation*}
        S^r_\bullet(\C\subset\D)\to^q \D/\C
    \end{equation*}
   be the map of simplicial objects in categories, given by
    \begin{equation*}
        \left(Y_1\hookrightarrow\cdots\hookrightarrow Y_n;X_1\hookrightarrow \cdots\hookrightarrow
        X_{n+1}\right)\mapsto X_{n+1}.
    \end{equation*}
    Then, in the homotopy commuting cube of spaces, all diagonal arrows are equivalences
    \begin{equation*}
        \xymatrix@=9pt{
            |S_\bullet(\C)^\times| \ar[dr]^1 \ar[rr]^f \ar[dd] && |S_\bullet(\D)^\times| \ar'[d][dd] \ar[dr]^1 \\
            & |S_\bullet(\C)^\times| \ar[rr]^(.3){f} \ar[dd] && |S_\bullet(\D)^\times| \ar[dd]\\
            |S_\bullet S^r_\bullet(1_{\C})^\times| \ar'[r][rr] \ar[dr] && |S_\bullet S^r_\bullet(f)^\times| \ar[dr]^{|S_\bullet q|} \\
            & \ast \ar[rr] && |S_\bullet(\D/\C)^\times|. }
    \end{equation*}
\end{proposition}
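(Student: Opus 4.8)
This is Schlichting's localisation theorem, and the proof we have in mind follows his argument, which in turn rests on Waldhausen's treatment of the relative $S$-construction. Of the connecting diagonals of the cube, two are identity maps and hence trivially equivalences, and the homotopy-commutativity of every face is immediate from the definitions of the simplicial structure on $S^r_\bullet(-)$ and of the map $q$. So everything comes down to proving that the remaining two diagonals,
$$|S_\bullet S^r_\bullet(1_{\C})^\times| \longrightarrow \ast \qquad\text{and}\qquad |S_\bullet S^r_\bullet(f)^\times| \xrightarrow{\,|S_\bullet q|\,} |S_\bullet(\D/\C)^\times|,$$
are equivalences; equivalently, that the ``back'' square of the cube is homotopy cartesian (which is formal) and that the ``front'' square is too.

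For the first map, no hypothesis on $\C\subset\D$ is needed. An object of $S^r_n(1_{\C})$ is, up to canonical isomorphism, just a chain $X_1\hookrightarrow\cdots\hookrightarrow X_{n+1}$ in $\C$ (the $Y$-part being pinned down by $Y_i\cong X_{i+1}/X_1$), so $S^r_\bullet(1_{\C})$ carries an extra degeneracy and is a d\'ecalage-type construction on $S_\bullet\C$. Applying the outer $S_\bullet$, passing to groupoid cores, and realising all preserve this extra degeneracy, so $|S_\bullet S^r_\bullet(1_{\C})^\times|$ is contractible. This is also an instance of Waldhausen's additivity theorem: the relative term of the identity functor is $K$-theoretically trivial, which is exactly the assertion that the back face of the cube is homotopy cartesian, so the genuine content is concentrated in the front face.

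The real work is the second map. Here the hypotheses enter crucially: since $\C\subset\D$ is right $s$-filtering with $\C$ idempotent complete, the quotient $\D/\C$ is again an exact category and the quotient functor $p\colon\D\to\D/\C$ is exact, with its morphisms represented by roofs built from admissible monomorphisms/epimorphisms having kernels and cokernels in $\C$ (Schlichting, Prop.\ 1.16 and its surroundings); idempotent completeness of $\C$ is what makes these roofs compose well. The plan is to run Waldhausen's fibration argument, organised via Quillen's Theorem~B and applied levelwise in the outer $S_\bullet$-direction: one shows that the homotopy fibre of $q$ over an object $W$ of $\D/\C$ is the realisation of the simplicial category of $\C$-\emph{admissible filtered lifts} of $W$ in $\D$ --- chains $X_1\hookrightarrow\cdots\hookrightarrow X_{n+1}$ in $\D$ with each cokernel $X_{i+1}/X_1$ (hence each $X_{i+1}/X_i$, since right $s$-filtering subcategories are closed under admissible quotients) lying in $\C$, and $p(X_{n+1})\cong W$ --- and that this category is contractible. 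Granting contractibility for every $W$, the map $q$ becomes an equivalence after $S_\bullet(-)^\times$ and realisation, and, together with the contractibility of $|S_\bullet S^r_\bullet(1_{\C})^\times|$ and the formal commutativity of the cube, this proves the proposition. The main obstacle is precisely this contractibility statement: connecting any two lifts of $W$ through a zig-zag of admissible monos and epis with kernels and cokernels in $\C$, coherently enough to trivialise the nerve, is where right $s$-filtering and idempotent completeness of $\C$ are indispensable, and it is the technical heart of Schlichting's Lemma~2.3; everything else in the cube is formal manipulation of the $S_\bullet$-construction.
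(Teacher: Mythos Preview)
The paper does not give its own proof of this proposition: it is stated as a quotation of Schlichting's result (\cite[Lemma~2.3]{MR2079996}) and is immediately followed by ``From this proposition, Schlichting could deduce the following result'' with no intervening proof environment. So there is nothing in the paper to compare your argument against; the authors simply import the statement.

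That said, your sketch is a faithful outline of Schlichting's actual proof. The contractibility of $|S_\bullet S^r_\bullet(1_{\C})^\times|$ via the extra degeneracy (Waldhausen's path-space argument) is exactly how it is done, and the reduction of the remaining diagonal to a fibrewise contractibility statement for lifts of objects of $\D/\C$, proved using the right $s$-filtering and idempotent-completeness hypotheses, is the heart of Schlichting's Lemma~2.3. One small point of precision: your description of ``the homotopy fibre of $q$ over an object $W$'' is slightly informal, since $q$ is a map from a simplicial exact category to a constant one; in Schlichting's argument this is handled by showing that for each fixed simplicial degree $n$ the exact functor $S^r_n(f)\to\D/\C$ induces an equivalence after applying $|S_\bullet(-)^\times|$ (using Waldhausen's approximation/cofinality machinery), and then realising in the $n$-direction. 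Your phrase ``applied levelwise in the outer $S_\bullet$-direction'' suggests you have this in mind, but as written the fibre language could be misread.
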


From this proposition, Schlichting could deduce the following result.

\begin{theorem}[Schlichting's Localization Theorem]\label{thm:schlichting}
    Let $\C\subset\D$ be an idempotent complete subcategory, which is left or right $s$-filtering. The commutative square of spaces
    \begin{equation*}
        \xymatrix{
            K_{\C} \ar[r] \ar[d] & K_{\D} \ar[d] \\
            \ast \ar[r] & K_{\D/\C}
        }
    \end{equation*}
    is cartesian.
\end{theorem}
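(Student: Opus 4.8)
The plan is to deduce the theorem purely formally from the homotopy commutative cube of Proposition \ref{prop:Schlichting}; the only input that is not a manipulation of homotopy cartesian squares is a standard property of Waldhausen's relative $S_\bullet$-construction, and I expect that to be the real content. Precisely, for an arbitrary exact functor $g\colon\C\to\D$ I would first invoke \cite{MR0802796} --- via the Additivity Theorem for exact categories --- to know that the \emph{front face} of the cube,
\[
\xymatrix{
|S_\bullet(\C)^\times| \ar[r]^{g} \ar[d] & |S_\bullet(\D)^\times| \ar[d] \\
|S_\bullet S^r_\bullet(1_\C)^\times| \ar[r] & |S_\bullet S^r_\bullet(g)^\times|
}
\]
is homotopy cartesian. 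Since $|S_\bullet S^r_\bullet(1_\C)^\times|$ is contractible --- this is part of Proposition \ref{prop:Schlichting}, proved by an extra-degeneracy argument identifying $S^r_\bullet(\id_\C)$ with the d\'ecalage $\Dec S_\bullet\C$ --- this is equivalent to saying that $|S_\bullet(\C)^\times|\to|S_\bullet(\D)^\times|\to|S_\bullet S^r_\bullet(g)^\times|$ is a homotopy fibre sequence. Applying this with $g=f$ is the step I expect to be the main obstacle; everything after it is diagram chasing.

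Granting this, the cube does the rest. By Proposition \ref{prop:Schlichting} the four diagonal edges of the cube form an objectwise weak equivalence from the front-face square (taken with $g=f$) to the \emph{back-face} square
\[
\xymatrix{
|S_\bullet(\C)^\times| \ar[r]^{f} \ar[d] & |S_\bullet(\D)^\times| \ar[d] \\
\ast \ar[r] & |S_\bullet(\D/\C)^\times|
}
\]
--- namely, the two identity maps on $|S_\bullet(\C)^\times|$ and $|S_\bullet(\D)^\times|$, the contraction $|S_\bullet S^r_\bullet(1_\C)^\times|\xrightarrow{\sim}\ast$, and the equivalence $|S_\bullet q|\colon|S_\bullet S^r_\bullet(f)^\times|\xrightarrow{\sim}|S_\bullet(\D/\C)^\times|$. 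Since being homotopy cartesian is invariant under an objectwise equivalence of commuting squares, the back-face square is homotopy cartesian. Every space $|S_\bullet(\mathcal{E})^\times|$ is connected --- its underlying simplicial space has the point $S_0\mathcal{E}=0$ in degree zero --- so the four spaces are canonically based, and applying the loop functor $\Omega$, which preserves homotopy pullbacks, turns the back-face square into the square
\[
\xymatrix{
K_\C \ar[r] \ar[d] & K_\D \ar[d] \\
\ast \ar[r] & K_{\D/\C}
}
\]
of $K$-theory spaces, which is therefore homotopy cartesian. This proves the theorem when $\C\subset\D$ is right $s$-filtering.

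For the left $s$-filtering case I would pass to opposite categories: $\C^{\op}\subset\D^{\op}$ is then right $s$-filtering, and since $K_{\mathcal{E}}\simeq K_{\mathcal{E}^{\op}}$ for every exact category while $\D/\C\simeq(\D^{\op}/\C^{\op})^{\op}$ as exact categories, the homotopy cartesian square produced by the right $s$-filtering case applied to $\C^{\op}\subset\D^{\op}$ is exactly the one required. In summary, modulo importing the fibration property of the relative $S_\bullet$-construction from \cite{MR0802796} --- where the Additivity Theorem does the work --- the theorem is a formal consequence of Proposition \ref{prop:Schlichting}: one transports its cube across the four diagonal equivalences and then applies $\Omega$.
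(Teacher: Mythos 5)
Your argument is correct and is precisely the deduction the paper intends: Theorem \ref{thm:schlichting} is stated as a recollection of Schlichting's result, with no proof given beyond the remark that it follows from Proposition \ref{prop:Schlichting}, and your reconstruction --- importing from Waldhausen the homotopy cartesianness of the relative $S_\bullet$-construction square via the Additivity Theorem, transporting it across the four diagonal equivalences of the cube, and then looping --- is exactly how Schlichting deduces it in \cite{MR2079996}. The reduction of the left $s$-filtering case to the right one by passing to opposite categories is likewise the standard (and correct) step.
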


In \cite[Corollary 2.39]{ArticleTate2}, the authors established the following description of boundary maps in $K$-theory.

\begin{theorem}\label{thm:boundary}
    Let $\C\subset\D$ be a right $s$-filtering inclusion of an idempotent complete subcategory. Consider the map of spaces
    \begin{equation*}
        |S^r_\bullet(\C\subset\D)^\times|\to^{\delta} |S_\bullet(\C)^\times|,
    \end{equation*}
    induced by the map of simplicial objects in exact categories
    $$\left(Y_1\hookrightarrow\cdots\hookrightarrow Y_n;X_1\hookrightarrow \cdots\hookrightarrow
        X_{n+1}\right)\mapsto (Y_1 \hookrightarrow \cdots \hookrightarrow Y_{n}).$$
   It is canonically equivalent to the boundary map
    \begin{equation}\label{Schlichtingboundary}
        \Omega|S_\bullet(\D/\C)^\times|\to^\partial |S_\bullet(\C)^\times|
    \end{equation}
    associated to the localization sequence
    \begin{equation}\label{Schlichting}
        \xymatrix{
          |S_\bullet(\C)^\times| \ar[d] \ar[r] & |S_\bullet(\D)^\times| \ar[d] \\
          \ast \ar[r] & |S_\bullet(\D/\C)^\times|   }.
    \end{equation}
\end{theorem}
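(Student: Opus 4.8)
The plan is to \emph{unwind} the way the boundary map of Schlichting's localization theorem is constructed and to match it, one structure map at a time, with the explicit simplicial map $\delta$. The only two inputs I would use are the cube of Proposition~\ref{prop:Schlichting} and the Localization Theorem~\ref{thm:schlichting}; the theorem is essentially a bookkeeping statement that these two facts, taken together, describe $\partial$ in terms of the relative $S$-construction.

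First I would feed the Localization Theorem into the cube. Read for the once-delooped $K$-theory spaces $|S_\bullet(-)^\times|$, Theorem~\ref{thm:schlichting} says exactly that the bottom face of Schlichting's cube is homotopy cartesian; since all four diagonals of the cube are equivalences, the top face
\[
\xymatrix{
|S_\bullet(\C)^\times| \ar[r]^{f}\ar[d] & |S_\bullet(\D)^\times| \ar[d] \\
|S_\bullet S^r_\bullet(1_\C)^\times| \ar[r] & |S_\bullet S^r_\bullet(f)^\times|
}
\]
is homotopy cartesian as well, and its lower-left corner is contractible (this is the relevant diagonal of the cube; concretely, the $X$-flag projection identifies $S^r_\bullet(1_\C)$ with $\Dec S_\bullet\C$, whose realisation is a point). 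Hence the top face is a homotopy fibre sequence, and via the diagonal equivalence $|S_\bullet q|$ it is identified with the fibre sequence $|S_\bullet(\C)^\times|\to|S_\bullet(\D)^\times|\to|S_\bullet(\D/\C)^\times|$ underlying the Localization Theorem. Reading the same cartesian square along the other pair of fibres yields a canonical equivalence $\Omega|S_\bullet(\D/\C)^\times|\simeq \mathrm{hofib}(f)$ under which the fibre inclusion $\mathrm{hofib}(f)\to|S_\bullet(\C)^\times|$ is the connecting map $\partial$; this is the general fact that for a fibre sequence $F\to E\to B$ the comparison $\mathrm{hofib}(F\to E)\to F$ is the connecting map $\Omega B\to F$.

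The heart of the argument is then to locate $\delta$ inside this picture. The point-set formula $(Y_1\hookrightarrow\cdots\hookrightarrow Y_n;\,X_1\hookrightarrow\cdots\hookrightarrow X_{n+1})\mapsto(Y_1\hookrightarrow\cdots\hookrightarrow Y_n)$ is one of the structure maps of $S^r_\bullet(\C\subset\D)$; the complementary pieces of data are the ``extra face'' $(Y_\bullet;X_\bullet)\mapsto(X_2/X_1\hookrightarrow\cdots\hookrightarrow X_{n+1}/X_1)\cong f(Y_\bullet)$ that realises the $\Dec$-contraction, the map $q$ of Proposition~\ref{prop:Schlichting}, and the levelwise admissible monics $X_1\hookrightarrow X_{i+1}$ with cokernels $Y_i$. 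I would spell these out into the commuting diagram relating $|S^r_\bullet(\C\subset\D)^\times|$ to the top face of the cube, and check — using the Additivity Theorem of Waldhausen~\cite{MR0802796} to collapse the constant ($X_1$-)flag direction — that $\delta$ agrees, under the equivalences produced in the first step, with the fibre inclusion $\mathrm{hofib}(f)\to|S_\bullet(\C)^\times|$, hence with $\partial$. Since every equivalence used is either a diagonal of Schlichting's cube or a $\Dec$/Additivity identification, and all of these are natural in the pair $(\D,\C)$, the resulting equivalence $\delta\simeq\partial$ is canonical.

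The main obstacle is precisely this last matching: handling the three simplicial directions ($S_\bullet$, $S^r_\bullet$, and the nerve) at once and verifying that the \emph{abstract} boundary operator of the localization fibre sequence is computed, on the nose, by the \emph{concrete} formula $(Y_\bullet;X_\bullet)\mapsto Y_\bullet$ and not merely up to an unnamed homotopy. The decisive point ought to be that the extra face of $S^r_\bullet$ — the one quotienting the $X$-flag by $X_1$ — is exactly the face that witnesses the contractibility of $|S_\bullet S^r_\bullet(1_\C)^\times|$, so that once it has been stripped off the boundary operator of the fibre sequence is forced to be the remaining structure map $\delta$. I do not expect this to require a new idea beyond a careful comparison of simplicial objects, in the spirit of Schlichting's own proof of Proposition~\ref{prop:Schlichting}.
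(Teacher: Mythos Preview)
The paper does not give a proof of this statement: it is quoted verbatim as \cite[Corollary~2.39]{ArticleTate2}, so there is no argument in the present text to compare your proposal against.

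That said, your strategy is the right one and is, in outline, how the result is proved in the cited reference. The key structural facts are exactly the ones you isolate: the diagonal equivalences in Schlichting's cube identify the localization fibre sequence with the one built from $S^r_\bullet$, and the contractibility of $|S_\bullet S^r_\bullet(1_\C)^\times|$ comes from recognising $S^r_\bullet(1_\C)$ as the d\'ecalage of $S_\bullet(\C)$, whose extra face is precisely your map $\delta$. Once one knows that the boundary operator of a fibre sequence with contractible middle term is given by the contracting homotopy, the identification $\delta\simeq\partial$ follows.

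Two comments. First, the appeal to Waldhausen's Additivity Theorem is a red herring: nothing in this argument requires splitting a cofibre sequence of functors. The ``collapse of the $X_1$-direction'' is a pure d\'ecalage phenomenon (an extra degeneracy providing a simplicial contraction), not an additivity statement, and invoking Additivity obscures rather than clarifies the mechanism. Second, you correctly flag the crux --- matching the abstract connecting map with the concrete face map $\delta$ --- but then leave it as an expectation. This is the entire content of the theorem, and ``I do not expect this to require a new idea'' is not a proof. The missing ingredient is the standard identification: for a simplicial space $X_\bullet$ with $X_0\simeq\ast$, the edge $X_1\to\Omega|X_\bullet|$ is determined by the pair $(d_0,d_1)$, and under the d\'ecalage identification $S^r_\bullet(1_\C)\simeq\Dec S_\bullet(\C)$ the relevant face is exactly $\delta$. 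Spelling this out carefully, and tracking it through the diagonal equivalences of the cube, is what turns your sketch into a proof.
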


We now give a variant of the index map for relative Tate objects and relate it to boundary maps in algebraic $K$-theory.

\begin{definition}
    Let $\C \subset \D$ be an extension-closed subcategory.
    \begin{enumerate}
        \item For $n\ge 0$, define $\Gr_n^\le(\D,\C)$ to be the full subcategory of $\Fun([n+1],\elTate(\D,\C))$ consisting of sequences of admissible monics
            \begin{equation*}
                L_0\hookrightarrow\cdots\hookrightarrow L_n\hookrightarrow V
            \end{equation*}
            where, for all $i$, $L_i\hookrightarrow V$ is the inclusion of a relative lattice (cf. Definition \ref{defi:relative_gr}).\footnote{To see that this is an exact category, observe that because $\Pro(\D)$ and $\Ind(\C)$ are closed under extensions in $\elTate(\D,\C)$, $\Gr_n^\le(\D,\C)$ is closed under extensions in $\Fun([n+1],\elTate(\D,\C))$.}
        \item Define the \emph{relative Sato complex} $\Gr^{\leq}_{\bullet}(\D,\C)$ to be the simplicial diagram of exact categories with $n$-simplices $\Gr_n^\le(\D,\C)$, with face maps $d_i$ given by omitting the $i^{th}$ relative lattice, and with degeneracy maps $s_i$ given by repeating it.
    \end{enumerate}
\end{definition}

Lemma \ref{lemma:relGrinC} allows for the following definition.
\begin{definition}
    Let $\D$ be idempotent complete and let $\C\subset \D$ be a right s-filtering subcategory. The \emph{categorical relative index map} is the span of simplicial maps
    \begin{equation}\label{catrelindex}
        \elTate(\D,\C)\longleftarrow\Gr_\bullet^\le(\D,\C)\to^\Index S_\bullet(\C),
    \end{equation}
    where the left-facing arrow is given on $n$-simplices by the assignment
    \begin{equation*}
        (L_0\hookrightarrow\cdots\hookrightarrow L_n\hookrightarrow V)\mapsto V,
    \end{equation*}
    and $\Index$ is given on $n$-simplices by the assignment
    \begin{equation*}
        (L_0\hookrightarrow\cdots\hookrightarrow L_n\hookrightarrow V)\mapsto(L_1/L_0\hookrightarrow\cdots\hookrightarrow L_n/L_0).
    \end{equation*}
\end{definition}

We have an analogue of \cite[Prop. 3.3]{ArticleTate2} in this setting.
\begin{proposition}\label{prop:relgr}
    Let $\D$ be idempotent complete, and let $\C\subset\D$ be an extension-closed subcategory. Then the augmentation $\Gr_\bullet^\le(\D,\C)\to\elTate(\D,\C)$ of \eqref{catrelindex} induces an equivalence
    \begin{equation*}
        |\Gr_\bullet^\le(\D,\C)^\times|\to^\cong\elTate(\D,\C)^\times.
    \end{equation*}
\end{proposition}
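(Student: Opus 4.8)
The proof is a \emph{mutatis mutandis} repetition of that of \cite[Prop.~3.3]{ArticleTate2}, with Proposition \ref{prop:rel_Gr-directed} playing the role of the directedness of the ordinary Sato Grassmannian; I outline the steps. Write $N$ for the nerve of a small category and $N_p$ for its set of $p$-simplices. The plan is to study the bisimplicial set
\[
W_{p,q}\;:=\;N_p\bigl(\Gr^\le_q(\D,\C)^\times\bigr),
\]
and to compute its geometric realisation in two ways. Realising first in the $p$-direction gives $\lvert p\mapsto W_{p,q}\rvert=\lvert\Gr^\le_q(\D,\C)^\times\rvert$ in each $q$, hence $\lvert W\rvert\simeq\lvert\Gr^\le_\bullet(\D,\C)^\times\rvert$. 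On the other hand, the assignment $(L_0\hookrightarrow\cdots\hookrightarrow L_q\hookrightarrow V)\mapsto V$ is a map of simplicial categories $\Gr^\le_\bullet(\D,\C)\to\elTate(\D,\C)$ (the target constant in the simplicial direction), and taking cores and nerves turns it into a map of bisimplicial sets $W\to N_\bullet\bigl(\elTate(\D,\C)^\times\bigr)$ with target constant in the $q$-direction. This map underlies the augmentation of \eqref{catrelindex}, so it suffices to show it becomes an equivalence after realisation.

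The key step is to identify the simplicial set $q\mapsto W_{p,q}$ for fixed $p$. A morphism in the groupoid $\Gr^\le_q(\D,\C)^\times$ is rigid: an isomorphism between chains $(L_0\hookrightarrow\cdots\hookrightarrow L_q\hookrightarrow V)$ and $(L'_0\hookrightarrow\cdots\hookrightarrow L'_q\hookrightarrow V')$ is precisely an isomorphism $V\xrightarrow{\;\cong\;}V'$ carrying, for each $i$, the admissible subobject $L_i\hookrightarrow V$ to the admissible subobject $L'_i\hookrightarrow V'$. Consequently $W_{p,q}$ is the disjoint union, over the $p$-simplices $\sigma=(V^{(0)}\cong\cdots\cong V^{(p)})$ of $N_p(\elTate(\D,\C)^\times)$, of the sets of chains $L_0\le\cdots\le L_q$ of relative lattices in the initial vertex $V^{(0)}$ (the chain at the other vertices being obtained by transport along $\sigma$). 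By Definition \ref{defi:relative_gr} and Lemma \ref{lemma:reflected}, these chains are exactly the $q$-simplices of $N\,\Gr_\C(V^{(0)})$; hence, for fixed $p$, one has an isomorphism of simplicial sets in $q$
\[
\bigl(q\mapsto W_{p,q}\bigr)\;\cong\;\coprod_{\sigma\in N_p(\elTate(\D,\C)^\times)}N\,\Gr_\C\bigl(V^{(0)}_\sigma\bigr),
\]
under which the projection to $N_p(\elTate(\D,\C)^\times)$ is the fold map.

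To conclude: every relative elementary Tate object has a relative lattice (the Lemma following Definition \ref{defi:relative_gr}), so each $\Gr_\C(V^{(0)}_\sigma)$ is nonempty, and by Proposition \ref{prop:rel_Gr-directed} it is a directed poset; a nonempty directed poset has contractible nerve. Therefore, realising the previous display in $q$, the map $\lvert q\mapsto W_{p,q}\rvert\to N_p(\elTate(\D,\C)^\times)$ is a weak equivalence for every $p$. As geometric realisation of simplicial spaces preserves objectwise weak equivalences, realising now in $p$ shows that $\lvert W\rvert\to\lvert N_\bullet(\elTate(\D,\C)^\times)\rvert=\elTate(\D,\C)^\times$ is a weak equivalence; comparing with the first computation of $\lvert W\rvert$ and keeping track of the maps to $\elTate(\D,\C)^\times$ yields the proposition. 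The one genuinely delicate point is the rigidity bookkeeping of the second paragraph: one must check carefully that the core groupoid $\Gr^\le_q(\D,\C)^\times$ fibres over $\elTate(\D,\C)^\times$ with fibres the \emph{discrete} nerves $N\,\Gr_\C(V)$, so that Proposition \ref{prop:rel_Gr-directed} applies fibrewise; the rest is routine.
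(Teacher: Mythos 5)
Your argument is correct, and it rests on exactly the same key inputs as the paper's proof (the existence of a relative lattice for every object of $\elTate(\D,\C)$, and Proposition \ref{prop:rel_Gr-directed} giving directedness, hence contractibility of the nerve, of each $\Gr_\C(V)$); but the formal mechanism differs. The paper identifies the realisation $|\Gr_\bullet^\le(\D,\C)^\times|$ with the classifying space of a single category $\Gr^\le(\D,\C)$ of pairs $(V,L)$ and then applies Quillen's Theorem A to the forgetful functor $(V,L)\mapsto V$, the comma categories being the directed posets $\Gr_\C(V)$. You instead work directly with the bisimplicial set $W_{p,q}=N_p(\Gr_q^\le(\D,\C)^\times)$, decompose each column $q\mapsto W_{p,q}$ as a coproduct of nerves $N\,\Gr_\C(V^{(0)}_\sigma)$ over the $p$-simplices of $N(\elTate(\D,\C)^\times)$, and invoke the realisation lemma for bisimplicial sets. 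Your rigidity bookkeeping is sound: since each $L_i\hookrightarrow V$ is monic, an isomorphism in the core of $\Fun([q+1],\elTate(\D,\C))$ is determined by its component on $V$ together with the condition that it match the chains of subobjects, so the fibres of $W_{p,q}\to N_p(\elTate(\D,\C)^\times)$ really are the discrete sets $N_q\,\Gr_\C(V^{(0)})$. What your route buys is self-containedness --- you never need Theorem A, only the standard fact that a levelwise weak equivalence of bisimplicial sets realises to a weak equivalence, and the target being a groupoid makes the fibrewise trivialisation by transport along $\sigma$ unproblematic; what the paper's route buys is brevity, since Theorem A absorbs all of that bookkeeping in one stroke.
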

\begin{proof}
    Proposition \ref{prop:rel_Gr-directed} implies that the relative Grassmannian is a directed partially ordered set. This implies that the geometric realisation of its nerve (also known as classifying space) is contractible. The simplicial groupoid $\Gr_\bullet^\le(\D,\C)^\times$ is equivalent to the nerve of the category $\Gr^{\le}(\D,\C)$, whose objects are pairs $(V,L)$, with $V \in \elTate^{\grp}(\D,\C)$, and $L \in \Gr(V)$. Morphisms are given by commutative diagrams
\[
\xymatrix{
L \ar@{^(->}[r] \ar@{^(->}[d] & M \ar@{^(->}[d] \\
V \ar[r]^{\simeq} & W
}
\]
By virtue of Quillen's Theorem A we obtain that the functor $$\Gr^{\le}(\D,\C)^{\grp} \rightarrow \elTate(\D,\C)^{\grp}$$ induces an equivalence of classifying spaces.
\end{proof}

Along with Lemma \ref{lemma:SrelInd} this implies an analogue of Corollary \cite[Cor. 3.6]{ArticleTate2}.
\begin{corollary}
    Let $\D$ be idempotent complete, and let $\C\subset\D$ be a right s-filtering subcategory. The categorical relative index map determines a map of infinite loop spaces
    \begin{equation}\label{relinfiniteloop}
        B\Index\colon |S_\bullet\elTate(\D,\C)^\times|\to|S_\bullet S_\bullet(\C)^\times|
    \end{equation}
    which fits into a homotopy commuting square
    \begin{equation}\label{relconnKtriang}
        \xymatrix{
            \elTate(\D,\C)^\times \ar[d] \ar[rr]^{\Index} && |S_\bullet(\C)^\times| \ar[d]^\simeq \\
            \Omega|S_\bullet\elTate(\D,\C)^\times| \ar[rr]_{\qquad \Omega B\Index} && \Omega|S_\bullet S_\bullet(\C)^\times| }.
    \end{equation}
    We refer to the looping of the bottom horizontal map as the \emph{$K$-theoretic relative index map}.
\end{corollary}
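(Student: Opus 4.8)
The plan is to follow the proof of \cite[Cor. 3.6]{ArticleTate2} essentially verbatim, using Proposition \ref{prop:relgr} in place of \cite[Prop. 3.3]{ArticleTate2} and Lemma \ref{lemma:SrelInd} in place of the compatibility of $\Ind$, $\Pro$ and $\elTate$ with the $S_\bullet$-construction.

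First I would promote the categorical relative index span \eqref{catrelindex} to a span of bi-simplicial exact categories by applying Waldhausen's $S_\bullet$-construction levelwise, obtaining $S_\bullet\elTate(\D,\C) \longleftarrow S_\bullet\Gr_\bullet^\le(\D,\C) \xrightarrow{\;S_\bullet\Index\;} S_\bullet S_\bullet(\C)$; here $\Index$ takes values in $S_\bullet(\C)$ by Lemma \ref{lemma:relGrinC}, which is where the right $s$-filtering hypothesis enters. The one point requiring genuine work is the compatibility of the relative Sato complex with $S_\bullet$: for each $k\ge 0$ one needs a natural exact equivalence $S_k\Gr_\bullet^\le(\D,\C)\simeq\Gr_\bullet^\le(S_k\D,S_k\C)$. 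Unwinding definitions, a length-$k$ admissible filtration of a flag of relative lattices $L_0\hookrightarrow\cdots\hookrightarrow L_n\hookrightarrow V$ in $\elTate(\D,\C)$ is the same datum as a flag of relative lattices in the object of $\elTate(S_k\D,S_k\C)$ that corresponds, via Lemma \ref{lemma:SrelInd}, to the induced filtration of $V$; this is the relative incarnation of the ``$S_\bullet$ of a Sato Grassmannian is a Sato Grassmannian'' lemma and goes through as in the absolute case.

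Next, for each fixed $k$ the pair $(S_k\D,S_k\C)$ has $S_k\D$ idempotent complete and $S_k\C$ extension-closed in it, so Proposition \ref{prop:relgr} applies and gives an equivalence $|\Gr_\bullet^\le(S_k\D,S_k\C)^\times|\xrightarrow{\simeq}\elTate(S_k\D,S_k\C)^\times\simeq\bigl(S_k\elTate(\D,\C)\bigr)^\times$, the last step by Lemma \ref{lemma:SrelInd}. Combined with the previous paragraph, the augmentation map $|S_\bullet\Gr_\bullet^\le(\D,\C)^\times|\to|S_\bullet\elTate(\D,\C)^\times|$ is the geometric realisation of a levelwise weak equivalence of simplicial spaces, hence a weak equivalence. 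I would then define $B\Index$ of \eqref{relinfiniteloop} as the composite of a homotopy inverse to this augmentation equivalence with the map $|S_\bullet\Index^\times|\colon|S_\bullet\Gr_\bullet^\le(\D,\C)^\times|\to|S_\bullet S_\bullet(\C)^\times|$. Each of the three spaces here is an infinite loop space, being a term in positive degree of the $K$-theory spectrum obtained by iterating $S_\bullet$, and the two legs of the span are induced by levelwise exact functors and hence are infinite loop maps; so $B\Index$ is a map of infinite loop spaces.

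Finally, the homotopy commuting square \eqref{relconnKtriang} should follow from the naturality of the canonical maps $\E^\times\to\Omega|S_\bullet\E^\times|$, and of $|\E_\bullet^\times|\to\Omega|S_\bullet\E_\bullet^\times|$ for simplicial exact $\E_\bullet$, with respect to both legs of the span: the naturality square for the augmentation leg, with its two horizontal arrows inverted using Proposition \ref{prop:relgr} at the top and the equivalence just established at the bottom, splices onto the naturality square for the $\Index$ leg to yield exactly \eqref{relconnKtriang}; the right-hand vertical map there is an equivalence by Waldhausen's additivity theorem \cite{MR0802796}. I expect the identification of $S_k$ of the relative Sato complex with the relative Sato complex of $S_k$ to be the main obstacle; everything else is formal bookkeeping with the $S_\bullet$-machinery, steered entirely by Proposition \ref{prop:relgr} and Lemma \ref{lemma:SrelInd}.
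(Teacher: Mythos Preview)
Your proposal is correct and is essentially the same approach as the paper's: the paper simply asserts that the corollary follows from Proposition~\ref{prop:relgr} together with Lemma~\ref{lemma:SrelInd} as the relative analogue of \cite[Cor.~3.6]{ArticleTate2}, and your write-up is a faithful unpacking of exactly that strategy (indeed, the identifications $|\Gr_\bullet^\le(S_\bullet\D,S_\bullet\C)^\times|\simeq\elTate(S_\bullet\D,S_\bullet\C)^\times\simeq S_\bullet\elTate(\D,\C)^\times$ you derive reappear verbatim in the paper's proof of Theorem~\ref{thm:relindexloc}). The only step you flag as potentially delicate, the compatibility $S_k\Gr_\bullet^\le(\D,\C)\simeq\Gr_\bullet^\le(S_k\D,S_k\C)$, is precisely what the paper also leaves implicit in its appeal to Lemma~\ref{lemma:SrelInd}.
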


This brings us to the main property of the relative Index map.

\begin{theorem}\label{thm:relindexloc}
    Let $\D$ be idempotent complete, and let $\C\subset\D$ be a right s-filtering subcategory. Then the $K$-theoretic relative index map fits into a homotopy commuting diagram
    \begin{equation*}
        \xymatrix{
            \Omega K_{\elTate(\D,\C)}  \ar[rr]^{\Omega^2 B\Index} && K_{\C}\\
            \Omega K_{\D/\C} \ar[u] \ar[urr]_{\partial}                     }.
    \end{equation*}
\end{theorem}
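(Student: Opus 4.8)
The plan is to factor the categorical relative index map through the $S^r_\bullet$-construction of a single $s$-filtering inclusion, and then reduce the statement to Theorem~\ref{thm:boundary} together with two applications of the Eilenberg swindle. By Proposition~\ref{prop:relTatequot} the inclusion $\C\subset\Pro(\D)$ is right $s$-filtering, and by Lemma~\ref{lemma:DicCic} the category $\C$ is idempotent complete, so Theorem~\ref{thm:boundary} applies to $\C\subset\Pro(\D)$. First I would observe that forgetting $V$ sends an $n$-simplex $(L_0\hookrightarrow\cdots\hookrightarrow L_n\hookrightarrow V)$ of $\Gr^{\le}_{\bullet}(\D,\C)$ to the chain $L_0\hookrightarrow\cdots\hookrightarrow L_n$ in $\Pro(\D)$, all of whose subquotients $L_i/L_0$ lie in $\C$ by Lemma~\ref{lemma:relGrinC}; recording these subquotients, this datum is exactly an $n$-simplex of $S^r_\bullet(\C\subset\Pro(\D))$. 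Thus the categorical relative index span factors as
\[
\elTate(\D,\C)\longleftarrow\Gr^{\le}_{\bullet}(\D,\C)\xrightarrow{\rho}S^r_\bullet(\C\subset\Pro(\D))\xrightarrow{\delta}S_\bullet(\C),
\]
where $\rho$ forgets $V$ and $\delta$ is the map of Theorem~\ref{thm:boundary}; indeed $\delta\circ\rho$ reads off $(L_1/L_0\hookrightarrow\cdots\hookrightarrow L_n/L_0)$, which is precisely $\Index$.

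The next step is to identify each leg after passing to $K$-theory. By Proposition~\ref{prop:relgr} the augmentation $\Gr^{\le}_{\bullet}(\D,\C)\to\elTate(\D,\C)$ is an equivalence of classifying spaces, so, after applying $(-)^\times$, $S_\bullet$, geometric realisation and looping as in the setup of the preceding corollary, the $K$-theoretic relative index map $\Omega^2 B\Index$ is computed by $\delta\circ\rho$. By Theorem~\ref{thm:boundary}, $\delta$ realises the boundary map $\partial'\colon\Omega K_{\Pro(\D)/\C}\to K_{\C}$ of the localisation sequence attached to $\C\subset\Pro(\D)$; here I use Proposition~\ref{prop:Schlichting} to identify $|S_\bullet S^r_\bullet(\C\subset\Pro(\D))^\times|$ with $|S_\bullet(\Pro(\D)/\C)^\times|$ via the map $q$. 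For $\rho$ I would argue that, under the equivalences of Propositions~\ref{prop:relgr} and~\ref{prop:Schlichting}, it becomes the map induced on $K$-theory by the quotient functor $\elTate(\D,\C)\to\elTate(\D,\C)/\Ind(\C)\cong\Pro(\D)/\C$ of Proposition~\ref{prop:relTatequot}, since the top relative lattice $L_n\hookrightarrow V$ represents the image of $V$ under this functor (any two relative lattices have the same image, by directedness of $\Gr_{\C}(V)$ and Lemma~\ref{lemma:relGrinC}). This functor is moreover a $K$-theory equivalence: $\Ind(\C)\subset\elTate(\D,\C)$ is $s$-filtering by Proposition~\ref{prop:relTatequot}, and $K_{\Ind(\C)}\simeq 0$ by the Eilenberg swindle, so Schlichting's Localisation Theorem~\ref{thm:schlichting} gives $K_{\elTate(\D,\C)}\xrightarrow{\sim}K_{\Pro(\D)/\C}$.

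Finally I would compare $\partial'$ with $\partial$. Since $\C\subset\D$, $\D\subset\Pro(\D)$ (by \cite[Theorem 4.2(2)]{TateObjectsExactCats}) and $\C\subset\Pro(\D)$ are all right $s$-filtering, the exact functor $\D\hookrightarrow\Pro(\D)$ induces a map of Schlichting localisation sequences, hence a homotopy commutative square
\[
\xymatrix{
\Omega K_{\D/\C}\ar[r]^{\partial}\ar[d]_{u}&K_{\C}\ar@{=}[d]\\
\Omega K_{\Pro(\D)/\C}\ar[r]^{\partial'}&K_{\C}
}
\]
in which $u$ is induced by the exact functor $\D/\C\to\Pro(\D)/\C$ (a second Eilenberg swindle, $K_{\Pro(\D)}\simeq 0$, shows $\partial'$ is in fact an equivalence, though this is not needed). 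By construction the vertical arrow $\Omega K_{\D/\C}\to\Omega K_{\elTate(\D,\C)}$ of the theorem is $u$ composed with the inverse of the equivalence $K_{\elTate(\D,\C)}\xrightarrow{\sim}K_{\Pro(\D)/\C}$ above. Stringing the pieces together, $\Omega^2 B\Index$ precomposed with the vertical arrow is homotopic to $\partial'\circ u$, which by the square above equals $\partial$, giving the asserted homotopy commutativity.

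The step I expect to be the main obstacle is the identification in the second paragraph: showing that the forgetful map $\rho$ is intertwined with the quotient functor $V\mapsto L$ of Proposition~\ref{prop:relTatequot} under the equivalences furnished by Propositions~\ref{prop:relgr} and~\ref{prop:Schlichting}. This amounts to a careful bookkeeping of how these three simplicial equivalences interact, and to matching the simplicial degrees so that applying $\Omega^2$ lands in the intended place; but it is the point on which the whole argument turns, the remaining ingredients — naturality of the localisation sequence and the two swindles — being formal.
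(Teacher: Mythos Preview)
Your approach is essentially the same as the paper's: factor $\Index$ through $S^r_\bullet(\C\subset\Pro(\D))$, use Proposition~\ref{prop:relgr} and Schlichting's Proposition~\ref{prop:Schlichting} to convert the span into genuine maps, invoke Theorem~\ref{thm:boundary} for the boundary at $\C\subset\Pro(\D)$, and compare with the boundary at $\C\subset\D$ via naturality of localisation along $\D\hookrightarrow\Pro(\D)$. The paper organises this as a single large $2$-commuting diagram with three rows (for $\elTate(\D,\C)$, $\Pro(\D)/\C$, $\D/\C$), but the content is identical.

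One point to fix: you justify the equivalence $K_{\elTate(\D,\C)}\xrightarrow{\sim}K_{\Pro(\D)/\C}$ by applying Schlichting's Localisation Theorem to $\Ind(\C)\subset\elTate(\D,\C)$, citing Proposition~\ref{prop:relTatequot} for the $s$-filtering. That proposition does not assert this; it only establishes the quotient equivalence $\elTate(\D,\C)/\Ind(\C)\simeq\Pro(\D)/\C$ and that $\C\subset\Pro(\D)$ is right $s$-filtering. The paper circumvents this by a bi-cartesian cube comparing the two localisation squares (for $\C\subset\Pro(\D)$ and $\Ind(\C)\subset\elTate(\D,\C)$), showing the vertical cofibres match via Propositions~\ref{prop:z2} and~\ref{prop:relTatequot}, and then using the Eilenberg swindle $K_{\Ind(\C)}\simeq 0$. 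Your argument goes through once you supply this step, or independently verify that $\Ind(\C)$ is right $s$-filtering in $\elTate(\D,\C)$.
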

\begin{proof}
    The categorical relative index map fits into a 2-commuting diagram
    \begin{equation*}
        \xymatrix{
            \elTate(\D,\C) \ar[d] && \Gr_\bullet^\le(\D,\C) \ar[ll] \ar[d] \ar[drr]\\
            \Pro(\D)/\C  && S^r_\bullet(\C\subset\Pro(\D)) \ar[ll] \ar[rr] && S_\bullet(\C) \\
            \D/\C \ar[u] && S^r_\bullet(\C\subset\D), \ar[u] \ar[ll] \ar[urr]
        }
    \end{equation*}
    and the map $\Gr_\bullet^\le(\D,\C)\to S^r_\bullet(\C\subset\Pro(\D))$ is given on $n$-simplices by the assignment
    \begin{align*}
        (L_0\hookrightarrow\cdots\hookrightarrow L_n\hookrightarrow V)\mapsto (L_1/L_0\hookrightarrow \cdots L_n/L_0;L_0\hookrightarrow\cdots\hookrightarrow L_n).
    \end{align*}
    Similarly, using Lemma \ref{lemma:SrelInd}, we see that there exists a 2-commuting diagram
    \begin{equation*}
        \xymatrix{
            S_\bullet(\elTate(\D,\C))^\times \ar[d] && \Gr_\bullet^\le(S_\bullet\D,S_\bullet(\C))^\times \ar[ll] \ar[d] \ar[drr]\\
            S_\bullet(\Pro(\D)/\C)^\times && S_\bullet S^r_\bullet(\C\subset\Pro(\D))^\times \ar[ll] \ar[rr] && S_\bullet S_\bullet(\C)^\times \\
            S_\bullet(\D/\C)^\times \ar[u] && S_\bullet S^r_\bullet(\C\subset\D)^\times \ar[ll] \ar[u] \ar[urr]
        }
    \end{equation*}
    Geometrically realizing and taking the double loop spaces, we obtain a homotopy commuting diagram
    \begin{equation*}
        \xymatrix{
            \Omega K_{\elTate(\D,\C)} \ar[d] && \Omega^2|\Gr_\bullet^\le(S_\bullet\D,S_\bullet(\C))^\times| \ar[ll]^\simeq \ar[d] \ar[drr]\\
            \Omega K_{\Pro(\D)/\C} && \Omega^2|S_\bullet S^r_\bullet(\C\subset\Pro(\D))^\times| \ar[ll]^\simeq \ar[rr] && K_{\C} \\
            \Omega K_{\D/\C} \ar[u] && \Omega^2|S_\bullet S^r_\bullet(\C\subset\D)^\times| \ar[u] \ar[ll]^\simeq. \ar[urr]
        }
    \end{equation*}
    Note that the lower two left-facing maps are equivalences by Schlichting's Proposition \ref{prop:Schlichting}. The first left-facing map is an equivalence by virtue of Waldhausen's $K_{\C} \cong \Omega|S_{\bullet}\C^{\grp}|$. To be more precise, we apply first Proposition \ref{prop:relgr} to deduce that $|\Gr_{\bullet}^{\le}(S_{\bullet}\D,S_{\bullet}\C)| \cong \elTate(S_{\bullet}\D,S_{\bullet}\C)^{\grp}$, and then Lemma \ref{lemma:SrelInd} to deduce $$\elTate(S_{\bullet}\D,S_{\bullet}\C)^{\grp} \cong S_{\bullet}\elTate(\D,\C)^{\times}.$$
    After inverting the left-facing equivalences, we obtain a homotopy commuting diagram
    \begin{equation*}
        \xymatrix{
            \Omega K_{\elTate(\D,\C)} \ar[d] \ar[drr]\\
            \Omega K_{\Pro(\D)/\C} \ar[rr] && K_{\C} \\
            \Omega K_{\D/\C}. \ar[u] \ar[urr]
        }
    \end{equation*}
    By Theorem \ref{thm:boundary}, it suffices to prove that the map
    \begin{equation*}
        \Omega K_{\elTate(\D,\C)}\to\Omega K_{\Pro(\D)/\C}
    \end{equation*}
    is an equivalence. We derive this from Proposition \ref{prop:relTatequot} as follows. To wit, consider the 2-commuting diagram of exact categories
         \begin{equation*}
                \xymatrix@=9pt{
                   \C \ar[rr] \ar[dr] \ar[dd] && 0 \ar[dr]\ar'[d][dd]\\
                    & \Pro(\D) \ar[rr] \ar[dd] && \Pro(\D)/\C \ar[dd]^\simeq \\
                    \Ind(\C) \ar'[r][rr] \ar[dr]  && 0 \ar[dr]\\
                    & \elTate(\D,\C) \ar[rr] && \elTate(\D,\C)/\Ind(\C).
                }
            \end{equation*}
    where the equivalence is that of Propositions \ref{prop:relTatequot}. Applying $K$-theory, we obtain a commuting diagram in the stable $\infty$-category of spectra
         \begin{equation*}
                \xymatrix@=9pt{
                   \Kk_{\C} \ar[rr] \ar[dr] \ar[dd] && 0 \ar[dr]\ar'[d][dd]\\
                    & \Kk_{\Pro(\D)} \ar[rr] \ar[dd] && \Kk_{\Pro(\D)/\C} \ar[dd]^\simeq \\
                    \Kk_{\Ind(\C)} \ar'[r][rr] \ar[dr]  && 0 \ar[dr]\\
                    & \Kk_{\elTate(\D,\C)} \ar[rr] && \Kk_{\elTate(\D,\C)/\Ind(\C)}.
                }
            \end{equation*}
Note that, of the entries in the diagram, only $\Kk_{\C}$ has non-vanishing $\pi_0$. Along with Theorem \ref{thm:schlichting}, this shows that the top face is bi-cartesian. We claim that the commuting cube above is bi-cartesian as well. By virtue of \cite[Lemma 1.2.4.15]{Lurie:HA}, this is equivalent to the induced square of cofibres of the vertical morphisms being bi-cartesian. However, Propositions \ref{prop:relTatequot} and \ref{prop:z2} allow one to compute those cofibres as the diagram obtained by applying $\Kk_-$ to the square of exact categories
\[
\xymatrix{
\Ind(\C)/\C \ar[r]^-{\cong} \ar[d] & \elTate(\D,\C)/\Pro(\D) \ar[d] \\
0 \ar[r] & 0,
}.
\]
Both this square and the resulting square of $K$-theory spaces are bi-cartesian. Since the top face and the commuting cube itself are bi-cartesian, we see that the bottom face has to be bi-cartesian as well.

The Eilenberg swindle implies that $\Kk_{\Ind(\C)}\simeq 0$, and we conclude that $\Kk_{\elTate(\D,\C)}\simeq \Kk_{\elTate(\D,\C)/\Ind(\C)}\simeq \Kk_{\Pro(\D)/\C}$ as claimed.
\end{proof}

\begin{corollary}\label{cor:G-theory}
    Let $X$ be a Noetherian scheme, and let $W\subset Z\subset X$ be a flag of closed subschemes. Then the diagram of exact categories
    \begin{equation*}
        \xymatrix{
            \Coh(W) \ar[rr] \ar[d] && \Coh(Z) \ar[rr] \ar[d] && \Coh(Z\setminus W) \ar[d]^{\mathbf{T}_Z} \\
            \Coh_W(X)\ar[rr] && \Coh_Z(X) \ar[rr] && \elTate(\Coh_Z(X),\Coh_W(X))
        }
    \end{equation*}
    determines a homotopy commuting triangle
    \begin{equation*}
        \xymatrix{
            \Omega K_{\Coh(Z\setminus W)} \ar[d]_{\mathbf{T}_Z} \ar[rr]^\partial && K_{\Coh(W)}\\
            \Omega K_{\elTate(\Coh_Z(X),\Coh_W(X))} \ar[urr]_{\Omega^2 B\Index}
        }.
    \end{equation*}
    Here we define the vertical map $K_{\Coh(Z\setminus W)} \rightarrow K_{\elTate(\Coh_Z(X),\Coh_W(X))}$ to be the composition of the map given by the exact functor $\Coh(Z\setminus W) \rightarrow \elTate(\Coh_Z(X),\Coh_W(X))/\Ind(\Coh_W(X))$ of Corollary \ref{cor:T_Z}, and the inverse of the homotopy equivalence $$K_{\elTate(\Coh_Z(X),\Coh_W(X))} \rightarrow K_{\elTate(\Coh_Z(X),\Coh_W(X))/\Ind(\Coh_W(X))}.$$
\end{corollary}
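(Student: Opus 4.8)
The plan is to read the triangle off Theorem~\ref{thm:relindexloc} applied to the pair $\C:=\Coh_W(X)\subset\D:=\Coh_Z(X)$, and then to identify the abstract $K$-theory corners and maps it produces with the geometric ones in the statement. Let me first record the hypotheses: $\Coh_Z(X)$ is abelian, hence idempotent complete, and $\Coh_W(X)$ is a Serre subcategory of it. Since $X$ is Noetherian this inclusion is right $s$-filtering in the sense of Definition~\ref{defi:filt}: it is right filtering because any morphism $G\to Y$ in $\Coh_Z(X)$ with $G$ set-theoretically supported on $W$ factors through its image, which is again supported on $W$; and it is right special by the Artin--Rees lemma, since given an admissible monic $Z'\hookrightarrow G$ with $Z'$ supported on $W$, the quotient $G\twoheadrightarrow G/\mathcal{I}_W^N G$ (for $\mathcal{I}_W$ the reduced ideal sheaf of $W$ and $N\gg 0$) is supported on $W$ and restricts to a monomorphism on $Z'$. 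Theorem~\ref{thm:relindexloc} then supplies a homotopy commuting triangle relating $\Omega^2 B\Index\colon\Omega K_{\elTate(\Coh_Z(X),\Coh_W(X))}\to K_{\Coh_W(X)}$ with the Schlichting boundary map $\partial\colon\Omega K_{\Coh_Z(X)/\Coh_W(X)}\to K_{\Coh_W(X)}$ of the localization sequence attached to $\Coh_W(X)\subset\Coh_Z(X)$.

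Next I would translate the two non-Tate corners into geometry. By Quillen's d\'evissage theorem, applied along the reduced structures, together with the nil-invariance of $G$-theory, the functors $\Coh(W)\to\Coh_W(X)$ and $\Coh(Z)\to\Coh_Z(X)$ of the displayed square induce equivalences on $K$-theory; likewise, restriction identifies the Serre quotient $\Coh_Z(X)/\Coh_W(X)$ with $\Coh_{Z\setminus W}(X\setminus W)$, and a further d\'evissage and nil-invariance give $K_{\Coh_Z(X)/\Coh_W(X)}\simeq K_{\Coh(Z\setminus W)}$. These identifications assemble into a commuting comparison of the bottom row $\Coh_W(X)\to\Coh_Z(X)\to\Coh_Z(X)/\Coh_W(X)$ with the top row $\Coh(W)\to\Coh(Z)\to\Coh(Z\setminus W)$; the latter, after d\'evissage on its left term, realizes Quillen's $G$-theory localization sequence for the open--closed pair $Z\setminus W\hookrightarrow Z\hookleftarrow W$. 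By naturality of the boundary map of a fibre sequence, the abstract $\partial$ above is carried to the $G$-theory boundary $\partial\colon\Omega K_{\Coh(Z\setminus W)}\to K_{\Coh(W)}$, so the triangle now has the shape asserted in the corollary except that its remaining edge is the structural ``upward'' map of Theorem~\ref{thm:relindexloc} rather than $\mathbf{T}_Z$.

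It remains to see that these two maps agree. Unwinding the proof of Theorem~\ref{thm:relindexloc}, the upward map $\Omega K_{\D/\C}\to\Omega K_{\elTate(\D,\C)}$ is the composite of the map $\Omega K_{\D/\C}\to\Omega K_{\Pro(\D)/\C}$ induced by the trivial-Pro inclusion $\D\hookrightarrow\Pro(\D)$ with the inverse of the equivalence $\Omega K_{\elTate(\D,\C)}\to\Omega K_{\Pro(\D)/\C}$, the latter factoring through $\Omega K_{\elTate(\D,\C)/\Ind(\C)}$ via Proposition~\ref{prop:relTatequot} and the Eilenberg-swindle equivalence $K_{\elTate(\D,\C)}\xrightarrow{\simeq}K_{\elTate(\D,\C)/\Ind(\C)}$ (using $K_{\Ind(\C)}\simeq 0$). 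By definition, the vertical map of the corollary is the exact functor $\Coh(Z\setminus W)\to\elTate(\Coh_Z(X),\Coh_W(X))/\Ind(\Coh_W(X))$ of Corollary~\ref{cor:T_Z} (in the form supplied by Lemma~\ref{lemma:UtoPro}), followed by that same swindle equivalence. So it suffices to check that this exact functor agrees, under the identification $\Coh(Z\setminus W)\simeq\Coh_Z(X)/\Coh_W(X)$ and the equivalence of Proposition~\ref{prop:relTatequot}, with the functor $\Coh_Z(X)/\Coh_W(X)\to\Pro(\Coh_Z(X))/\Coh_W(X)$ induced by the trivial-Pro inclusion. This is exactly what the construction of $\mathbf{T}_Z$ delivers: after passing to the quotient, $\mathbf{T}_Z(\F)$ is computed by choosing a coherent prolongment $\bar{\F}\in\Coh_Z(X)$ and applying $\mathbf{C}_Z$, and $\mathbf{C}_Z$ restricted to $\Coh_Z(X)$ is canonically isomorphic to the trivial-Pro inclusion, as recorded in the proof of Corollary~\ref{cor:T_Z}.

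The main obstacle is precisely this last verification: one must track the geometrically defined functor $\mathbf{T}_Z$ through the chain of equivalences (Proposition~\ref{prop:relTatequot}, the swindle equivalence, and the d\'evissage identifications) and confirm that the relevant square of exact functors $2$-commutes up to natural isomorphism. Everything else reduces to inputs already in hand: Theorem~\ref{thm:relindexloc}, Schlichting's Localization Theorem~\ref{thm:schlichting} and Theorem~\ref{thm:boundary}, Quillen's d\'evissage and localization theorems, and the nil-invariance of $G$-theory.
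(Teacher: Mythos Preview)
Your proposal is correct and follows essentially the same route as the paper: apply Theorem~\ref{thm:relindexloc} to $\C=\Coh_W(X)\subset\D=\Coh_Z(X)$, then use d\'evissage and Gabriel's quotient identification to translate the abstract triangle into the geometric one. The paper handles the right $s$-filtering hypothesis by citing Schlichting's observation that Serre subcategories are automatically right $s$-filtering, whereas you give a direct Artin--Rees argument; and you spell out in more detail than the paper why the abstract upward map of Theorem~\ref{thm:relindexloc} matches the $\mathbf{T}_Z$-based vertical map prescribed in the statement, a point the paper's proof leaves implicit.
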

\begin{proof}
    We have a nested chain of Serre subcategories $\Coh_W(X)\subset\Coh_Z(X)\subset\Coh(X)$. By \cite[Example 1.7]{MR2079996}, Serre subcategories are right s-filtering. Therefore, we can take $\D=\Coh_Z(X)$ and $\C=\Coh_W(X)$ and apply Theorem \ref{thm:relindexloc} to obtain a commuting triangle in the $\infty$-category of spaces
    \begin{equation*}
        \xymatrix{
            \Omega K_{\elTate(\Coh_Z(X),\Coh_W(X))} \ar[rr]^-{\Omega^2 B\Index} && K_{\Coh_W(X)}\\
            \Omega K_{\Coh_Z(X)/\Coh_W(X)} \ar[u] \ar[urr]_\partial
        }.
    \end{equation*}
    By devissage \cite[Section 5]{MR0338129}, we have equivalences $K_{\Coh(W)}\simeq K_{\Coh_W(Z)}\simeq K_{\Coh_W(X)}$ and $K_{\Coh(Z)}\simeq K_{\Coh_Z(X)}$. We also have an equivalence $\Coh(Z)/\Coh_W(Z)\simeq\Coh(Z\setminus W)$ \cite[Chapter 5]{MR0232821}. Together with Quillen's localization sequence (the precursor of Schlichting's localization theorem for abelian categories), these equivalences determine an equivalence $\Omega K_{\Coh_Z(X)/\Coh_W(X)}\simeq\Omega K_{\Coh(Z\setminus W)}$, which is compatible with the boundary maps of the localization sequences. Applying these equivalences to the commuting triangle above, we obtain the commuting triangle of the desired type.
\end{proof}

\begin{proof}[Proof of Theorem \ref{thm:main_intro}]
Recall the exact functors
$$\mathbf{C}_Z\colon \Coh(X) \rightarrow \Pro(\Coh_Z(X))$$
and
$$\mathbf{T}_Z\colon \Coh(X \setminus Z) \rightarrow \elTate(\Coh(X),\Coh_Z(X))/\Ind(\Coh_Z(X))$$
from Proposition \ref{prop:C_Z} and Corollary \ref{cor:T_Z}. It follows from the construction of Corollary \ref{cor:T_Z} that for every coherent sheaf $\F$ on $X$, the inclusion
$$\mathbf{C}_Z(\F) \hookrightarrow \mathbf{T}_Z(\F|_{X \setminus Z})$$
defines a relative lattice. Therefore, we obtain a canonical map
\[
\Gr^{\le}_{\bullet,\bullet}(X,Z) \rightarrow \Gr_{\bullet}^{\le}(S_{\bullet}\Coh(X),S_{\bullet}\Coh_Z(X)),
\]
and the assertion of the theorem follows from Corollary \ref{cor:G-theory}, and the fact that the poset of coherent subsheaves of $j_*\F|_{X \setminus Z}$, which extends $\F|_{X \setminus Z}$ is filtered (indeed, if $\mathcal{G}_1,\mathcal{G}_2 \subset j_*\F|_{X \setminus Z}$ are coherent subsheaves, then so is $\mathcal{G}_1 + \mathcal{G}_2$).
\end{proof}

\begin{acknowledgement}
We would like to thank Theo B\"uhler for very helpful correspondence.
\end{acknowledgement}

\bibliographystyle{amsalpha}
\bibliography{master,ollinewbib}

\def\cprime{$'$}
\providecommand{\bysame}{\leavevmode\hbox to3em{\hrulefill}\thinspace}
\providecommand{\MR}{\relax\ifhmode\unskip\space\fi MR }
\providecommand{\MRhref}[2]{%
  \href{http://www.ams.org/mathscinet-getitem?mr=#1}{#2}
}
\providecommand{\href}[2]{#2}
\begin{thebibliography}{BGW14b}

\bibitem[AM69]{MR0242802}
M.F. Atiyah and I.G. Macdonald, \emph{Introduction to commutative algebra},
  Addison-Wesley Publishing Co., Inc., Reading, Mass., 1969.

\bibitem[Be{\u\i}87]{MR923134}
A.~A. Be{\u\i}linson, \emph{How to glue perverse sheaves}, {$K$}-theory,
  arithmetic and geometry ({M}oscow, 1984--1986), Lecture Notes in Math., vol.
  1289, Springer, Berlin, 1987, pp.~42--51. \MR{923134 (89b:14028)}

\bibitem[BGW14a]{TateObjectsExactCats}
O.~Braunling, M.~Groechenig, and J.~Wolfson, \emph{{Tate Objects in Exact
  Categories}}, arXiv:1402.4969 [math.KT], 2014.

\bibitem[BGW14b]{ArticleTate2}
Oliver Braunling, Michael Groechenig, and Jesse Wolfson, \emph{The index map in
  algebraic ${K}$-theory}, arXiv:1410.1466 [math.KT] (2014).

\bibitem[B{\"u}h10]{MR2606234}
Theo B{\"u}hler, \emph{Exact categories}, Expo. Math. \textbf{28} (2010),
  no.~1, 1--69. \MR{2606234 (2011e:18020)}

\bibitem[Del66]{deligne}
P.~Deligne, \emph{Appendix to ``{R}esidues and duality'' by {R}. {H}artshorne},
  Lecture notes of a seminar on the work of A. Grothendieck, given at Harvard
  1963/64. With an appendix by P. Deligne. Lecture Notes in Mathematics, No.
  20, Springer-Verlag, Berlin, 1966. \MR{0222093 (36 \#5145)}

\bibitem[Fre66]{MR0209333}
Peter Freyd, \emph{Representations in abelian categories}, Proc. {C}onf.
  {C}ategorical {A}lgebra ({L}a {J}olla, {C}alif., 1965), Springer, New York,
  1966, pp.~95--120. \MR{0209333 (35 \#231)}

\bibitem[Gab62]{MR0232821}
P.~Gabriel, \emph{Des cat\'{e}gories ab\'{e}liennes}, Bull. Soc. Math. France
  \textbf{90} (1962), 323--448.

\bibitem[Kat00]{MR1804933}
Kazuya Kato, \emph{Existence theorem for higher local fields}, Invitation to
  higher local fields ({M}{\"u}nster, 1999), Geom. Topol. Monogr., vol.~3,
  Geom. Topol. Publ., Coventry, 2000, pp.~165--195. \MR{1804933 (2002e:11173)}

\bibitem[Lur14]{Lurie:HA}
Jacob Lurie, \emph{Higher algebra}, 2014.

\bibitem[Pre11]{MR2872533}
Luigi Previdi, \emph{Locally compact objects in exact categories}, Internat. J.
  Math. \textbf{22} (2011), no.~12, 1787--1821. \MR{2872533}

\bibitem[Qui73]{MR0338129}
Daniel Quillen, \emph{Higher algebraic {$K$}-theory. {I}}, Algebraic
  {$K$}-theory, {I}: {H}igher {$K$}-theories ({P}roc. {C}onf., {B}attelle
  {M}emorial {I}nst., {S}eattle, {W}ash., 1972), Springer, Berlin, 1973,
  pp.~85--147. Lecture Notes in Math., Vol. 341. \MR{0338129 (49 \#2895)}

\bibitem[Sch04]{MR2079996}
M.~Schlichting, \emph{Delooping the {$K$}-theory of exact categories}, Topology
  \textbf{43} (2004), no.~5, 1089--1103. \MR{2079996 (2005k:18023)}

\bibitem[{Sta}]{stacks-project}
{Stacks Project Authors}, \emph{Stacks project},
  http://math.columbia.edu/algebraic\_geometry/stacks-git.

\bibitem[Wal85]{MR0802796}
Friedhelm Waldhausen, \emph{Algebraic {$K$}-theory of spaces}, Algebraic
  Geometry and Topology (New Brunswick, N.J, 1983), Lecture Notes in
  Mathematics, vol. 1126, Springer, 1985.

\end{thebibliography}
\end{document}